\documentclass[a4paper,reqno,10pt]{amsart}

%\usepackage{pb-diagram}
%\tolerance=1000
%\hbadness=10000
\raggedbottom
\hfuzz3pt
\usepackage{epsf,graphicx,epsfig}
\usepackage{amscd}
\usepackage{amsmath,latexsym,amssymb,amsthm}
\usepackage[nospace,noadjust]{cite}
\usepackage{textcomp}
\usepackage{setspace,cite}
\usepackage{mathrsfs,amsmath}
\usepackage{lscape,fancyhdr,fancybox}
\usepackage{stmaryrd}
\usepackage[all,cmtip]{xy}
\usepackage{tikz}
\usepackage{cancel}
\usetikzlibrary{shapes,arrows,decorations.markings}
%\usepackage[hmarginratio=1:1, vmarginratio =5:6,
%textheight=22cm,bindingoffset=1.6cm, textwidth=14.6cm]{geometry}
\setlength{\unitlength}{0.4in}

\usepackage{graphicx}

\usepackage{color}
\usepackage{bbm}
\usepackage{xcolor}
\usepackage{url}
\usepackage{enumerate}
\usepackage[mathscr]{euscript}
%\usepackage{showkeys}
%\input xy
%\xyoption{all}

\setlength{\topmargin}{-0.5in}
\setlength{\textheight}{9.8in}
\setlength{\oddsidemargin}{-0.1in}
\setlength{\evensidemargin}{-.1in}
\setlength{\textwidth}{6.4in}

  \theoremstyle{plain}

\swapnumbers
    \newtheorem{thm}{Theorem}[section]
    \newtheorem{prop}[thm]{Proposition}

    \newtheorem{subsec}[thm]{}
\theoremstyle{definition}
    \newtheorem{defn}[thm]{Definition}
        \newtheorem{remark}[thm]{Remark}
    \newtheorem{exam}[thm]{Example}

\theoremstyle{remark}

\setcounter{tocdepth}{1}

\title{}
\author{}
\date{}
\usepackage{amssymb}

\usepackage{hyperref}
\hypersetup{
colorlinks,
citecolor=blue,
filecolor=black,
linkcolor=blue,
urlcolor=black
}

\begin{document}

\title[Matching relative Rota-Baxter algebras and matching dendriform algebras]{Matching relative Rota-Baxter algebras, matching dendriform algebras and their cohomologies}

\author{Ramkrishna Mandal}
\address{Department of Mathematics,
Indian Institute of Technology, Kharagpur 721302, West Bengal, India.}
\email{ramkrishnamandal430@gmail.com}

\author{Apurba Das}
\address{Department of Mathematics,
Indian Institute of Technology, Kharagpur 721302, West Bengal, India.}
\email{apurbadas348@gmail.com, apurbadas348@maths.iitkgp.ac.in}

\begin{abstract}
The notion of matching Rota-Baxter algebras was recently introduced by Gao, Guo and Zhang [{\em J. Algebra} 552 (2020) 134-170] motivated by the study of algebraic renormalization of regularity structures. The concept of matching Rota-Baxter algebras generalizes multiple integral operators with kernels. The same authors also introduced matching dendriform algebras as the underlying structure of matching Rota-Baxter algebras. In this paper, we introduce matching relative Rota-Baxter algebras that are also related to matching dendriform algebras. We define a matching associative Yang-Baxter equation whose solutions give rise to matching relative Rota-Baxter algebras. Next, we introduce the cohomology of a matching relative Rota-Baxter algebra as a byproduct of the classical Hochschild cohomology and a new cohomology induced by the matching operators. As an application, we show that our cohomology governs the formal deformation theory of the matching relative Rota-Baxter algebra. Finally, using multiplicative nonsymmetric operads, we define the cohomology of a matching dendriform algebra and show that there is a morphism from the cohomology of a matching relative Rota-Baxter algebra to the cohomology of the induced matching dendriform algebra. We end this paper by considering homotopy matching dendriform algebras.
\end{abstract}

\maketitle

%\curraddr{}
%\email{}

%\subjclass[2010]{}
%\keywords{}

\medskip

\begin{center}

\noindent {2020 MSC classifications:} {17B38,16S80,17B70.}

\noindent {Keywords:} Matching Rota-Baxter algebras, matching dendriform algebras, Cohomology, Deformations.

\end{center}

%\medskip

%\noindent {\sf Date of resubmission:} July 26, 2021.

\thispagestyle{empty}

\tableofcontents

%\vspace{0.2cm}

\medskip

%\textcolor{red}{When a pre-Leibniz algebra is called commutative (zinbiel version)? When symmetric (symmetric Leibniz version)?}

\section{Introduction}\label{sec1}
Rota-Baxter operators first appeared in the work of G. Baxter in the fluctuation theory of probability \cite{baxter}. Subsequently, such operators were studied by G. C. Rota \cite{rota}, P. Cartier \cite{cartier}, and F. V. Atkinson \cite{atkinson} among others. Rota-Baxter operators are the algebraic abstraction of the integral operator. In the last twenty years, Rota-Baxter operators experienced several developments including the connection with the algebraic approach to renormalization in perturbative quantum field theory \cite{connes}. Additionally, Rota-Baxter operators are closely related to the splitting of algebras (in particular, related to dendriform algebras and pre-Lie algebras) \cite{aguiar,bai-olivia}, the associative analogue of the classical Yang-Baxter equation \cite{aguiar2,aguiar4},  infinitesimal bialgebras \cite{aguiar3}, mixable shuffle product algebras \cite{guo-keigher} and many others. A Rota-Baxter algebra is an associative algebra $A$ equipped with a Rota-Baxter operator. See \cite{guo-book,ebrahimi,das-rota,das-mishra,tang,laza} for some developments of Rota-Baxter algebras.\\

\subsection{Relative Rota-Baxter algebras and dendriform algebras}
The notion of the relative Rota-Baxter operator (also known as the generalized Rota-Baxter operator or $\mathcal{O}$-operator in the literature) was first introduced by K. Uchino \cite{uchino} as a generalization of the Rota-Baxter operator in the presence of a bimodule. Let $A$ be an associative algebra and $M$ be an $A$-bimodule. Denote the associative multiplication on $A$ simply by dot $~\cdot~$ and both the left and right $A$-actions on $M$ simply by $~\cdot_M~$ if no confusion arises. A linear map $P : M \rightarrow A $ is a relative Rota-Baxter operator on $M$ over the algebra $A$ if the linear map $P$ satisfies
\begin{align*}
P(u) \cdot P(v) = P\big( P(u)\cdot_M v ~+~ u\cdot_M P(v) \big),  \text{ for } u,v\in M.    
\end{align*}  
In the same article, Uchino observed that relative Rota-Baxter operators are closely related to dendriform algebras, an algebraic structure introduced by J.-L. Loday \cite{loday}. A triple $(A, M, P)$ consisting of an algebra $A$, an $A$-bimodule $M$ and a relative Rota-Baxter operator $P$ is called a relative Rota-Baxter algebra. In \cite{das-rota,das-mishra}, the authors introduced the cohomology of relative Rota-Baxter operators and relative Rota-Baxter algebras and study their formal deformations. They also find the connection between the cohomology of a relative Rota-Baxter algebra and the cohomology of the induced dendriform algebra. See also \cite{das-dend,gubarev} for some other developments of relative Rota-Baxter algebras and dendriform algebras.\\

\subsection{Matching Rota-Baxter algebras and matching dendriform algebras}
Algebraic structures are often defined by multiple operations which has various compatibility conditions. Well-known structures that are defined by multiple operations include dendriform algebras \cite{loday}, diassociative algebras \cite{loday}, Poisson algebras \cite{sch} and compatible Lie algebras \cite{magri}. Recently, L. Foissy \cite{foissy} introduced a notion of multiple pre-Lie algebra motivated by the classical work of algebraic renormalization of regularity structures \cite{bruned}. Note that a multiple pre-Lie algebra is defined by multiple operations labelled by a nonempty set $X$. To better understand multiple pre-Lie algebras (also called matching pre-Lie algebras) from the point of view of Rota-Baxter algebras, X. Guo, L. Guo and Y. Zhang introduced a notion of matching Rota-Baxter algebras \cite{gao-guo-zhang}. Let $X$ be a nonempty set. A matching Rota-Baxter algebra is a pair $(A,\{P_x\}_{x\in X})$ consisting of an algebra $A$ and a collection $\{P_x\}_{x\in X}$ of linear maps on $A$ labelled by the set $X$ that satisfies
\begin{align}\label{rrba-iden}
 P_x(a)\cdot P_y(b) = P_x(a\cdot P_y(b)) + P_y(P_x(a)\cdot b),~~ \mathrm{for}~ a,b \in A~~ \mathrm{and}~ x,y \in X.  \end{align}
 Like Rota-Baxter algebras generalize integral operators, the notion of matching Rota-Baxter algebras
 generalize multiple integral operators with kernels. More precisely, let $A= \mathrm{Cont}(\mathbb{R})$ be the algebra of continuous functions on the real line. Consider a family of functions (called kernels in integral equations) $\{ k_\omega \}_{\omega \in X}$ in $A$ parametrized by the set $X$. We define a collection $\{ I_\omega : A \rightarrow A \}_{\omega \in X}$ of linear maps  by
 \begin{align*}
     \big( I_\omega (f) \big) (x) = \int_{0}^x k_\omega (t) f(t) dt, \text{ for } \omega \in X,  f \in A  \text{ and } x \in \mathbb{R}.
 \end{align*}
 Then $(A, \{ I_\omega \}_{\omega \in X})$ is a matching Rota-Baxter algebra. In the paper \cite{gao-guo-zhang}, the authors also introduced a notion of matching dendriform algebra as the structure behind a matching Rota-Baxter algebra, and showed that a matching dendriform algebra gives rise to a matching pre-Lie algebra in the sense of Foissy. The same set of authors also constructs the free commutative matching Rota-Baxter algebra using the shuffle product with multiple decorations and showed that such free algebra gives rise to the free matching Zinbiel algebra (commutative version of the free matching dendriform algebra) \cite{gao-guo-zhang2}. These are the only results available in the literature about matching Rota-Baxter algebras and matching dendriform algebras.\\

\subsection{Matching relative Rota-Baxter algebras and matching dendriform algebras (and their cohomological study)}
In this paper, we first introduce matching relative Rota-Baxter algebras as a generalization of matching  Rota-Baxter algebras. A matching relative Rota-Baxter algebra is a triple $(A,M,\{P_x\}_{x\in X})$ consisting of an algebra $A$, an $A$-bimodule $M$ and a collection $\{P_x:M\rightarrow A\}_{x\in X}$ of linear maps satisfying the relative version of (\ref{rrba-iden}). We introduce a notion of `matching associative Yang-Baxter equation' associated with an algebra $A$, whose solutions are called matching associative ${r}$-matrices. We show that matching associative $r$-matrices give rise to matching relative Rota-Baxter algebras. Next, we show that a matching relative Rota-Baxter algebra induces a matching dendriform algebra structure. Conversely, a matching dendriform algebra gives rise to a matching relative Rota-Baxter algebra so that the induced matching dendriform structure coincides with the given one.\\

Our main aim in this paper is to study matching (relative) Rota-Baxter algebras and matching dendriform algebras from cohomological points of view. Given an associative algebra $A$ and an $A$-bimodule $M$, we first construct a graded Lie algebra whose Maurer-Cartan elements are given by a collection of linear maps of the form $\{P_x\}_{x\in X}$ that satisfies the relative version of (\ref{rrba-iden}). Thus, a matching relative Rota-Baxter algebra $(A,M,\{P_x\}_{x\in X})$ gives rise to a Maurer-Cartan element and hence induces a cohomology. This cohomology is mainly associated with the collection of operators $\{P_x\}_{x\in X}$. As a byproduct of the above cohomology with the classical Hochschild cohomology of the algebra $A$, we then introduce the cohomology of the matching relative Rota-Baxter algebra $(A,M,\{P_x\}_{x\in X})$. We show that this cohomology governs the formal one-parameter deformation theory of the matching relative Rota-Baxter algebra $(A,M,\{P_x\}_{x\in X})$.  We also consider infinitesimal deformations and classify equivalence classes of infinitesimal deformations in terms of the second cohomology group of the matching relative Rota-Baxter algebra $(A,M,\{P_x\}_{x\in X})$.

\medskip

Next, we study the cohomology of matching dendriform algebras. Given a $\mathbb{K}$-module $D$, we first construct a nonsymmetric operad $\mathcal{O}_D$ whose multiplications correspond to matching dendriform algebra structures on $D$. Using this characterization, we define the cohomology of a matching dendriform algebra. Finally, we show that there is a morphism from the cohomology of a matching relative Rota-Baxter algebra to the cohomology of the induced matching dendriform algebra. Finally, we consider the homotopy version of matching relative Rota-Baxter algebras and matching dendriform algebras. We end this paper by generalizing some relations between matching relative Rota-Baxter algebras and matching dendriform algebras in the homotopy context.

\medskip

\subsection{Organization of the paper.}
In section \ref{sec2}, we recall some preliminaries on the cohomology of relative Rota-Baxter algebras. In section \ref{sec3}, we introduce matching relative Rota-Baxter algebras and the matching associative Yang-Baxter equation and find various relations with matching dendriform algebras. In section \ref{sec4} and \ref{sec5}, we respectively study cohomology and deformations of matching relative Rota-Baxter algebras. Finally, in section \ref{sec6}, we define the cohomology of a matching dendriform algebra and find a relation with the cohomology of matching relative Rota-Baxter algebras. In the same section, we also discuss homotopy matching dendriform algebras.

\medskip

Throughout the paper, $\mathbb{K}$ is a commutative unital ring with char $\mathbb{K}\neq{2}$. All modules, linear maps, $\mathrm{Hom}$ spaces and tensor products are over $\mathbb{K}$ unless specified otherwise. We also assume throughout that $X$ is a nonempty set whose elements are usually denoted by $x,y,z,\ldots$.

\medskip

\section{Background on relative Rota-Baxter algebras}\label{sec2}
In this section, we fix some notations and recall the cohomology of relative Rota-Baxter algebras. Our main references are \cite{das-rota,das-mishra}.

 Let $(A,\mu)$ be an associative algebra. That is, $A$ is a $\mathbb{K}$-module equipped with a $\mathbb{K}$-linear map (called the multiplication map) $\mu:A\otimes A \rightarrow A$,  $a \otimes b \mapsto \mu(a,b)=a\cdot b~$  satisfying the usual associativity $(a\cdot b)\cdot c = a\cdot(b\cdot c)$, for all $a,b,c \in A$. Throughout the paper, we denote an associative algebra $(A,\mu)$ simply by A, when the multiplication map $\mu$ is clear from the context.
 
 \medskip
 
 Let $A$ be an associative algebra. An $A$-bimodule consists of a $\mathbb{K}$-module $M$ together with two linear maps (called the left and right $A$-action, respectively) $l_{M}: A\otimes M \rightarrow M,~~ a\otimes u\mapsto l_M(a,u)= a\cdot_M u$ and $r_{M}: M\otimes A \rightarrow M,~~ u\otimes a\mapsto r_M(u, a)= u\cdot_M a$  satisfying
\begin{align*}
(a \cdot b) \cdot_M u = a\cdot_M(b\cdot_Mu), \quad  (a\cdot_M u)\cdot_M b = a\cdot_M(u\cdot_M b)   \quad \text{and} \quad  (u\cdot_Ma)\cdot_M b = u\cdot_M(a\cdot b),
\end{align*}
for all $a,b\in A$ and $u\in M$. We denote an $A$-bimodule as above by $(M,l_M,r_M)$ or simply by $M$ when the left and right $A$-actions are clear from the context. Note that any associative algebra $A$ can be regarded as an $A$-bimodule in which both the left and right $A$-actions are given by the algebra multiplication map. This is called the adjoint $A$-bimodule.

\medskip

Let $A$ be an associative algebra and $M$ be an $A$-bimodule. The Hochschild cochain complex of $A$ with coefficients in $M$ is given by $\{C^{\ast}_{\mathrm{Hoch}}(A,M),\delta_{\mathrm{Hoch}}\}$, where $C^{n}_{\mathrm{Hoch}}(A,M) := \mathrm{Hom}(A^{\otimes n},M)$ for $n\geq 0$ , and the coboundary map $\delta_{\mathrm{Hoch}} : C^n_{\mathrm{Hoch}}(A,M)\rightarrow C^{n+1}_{\mathrm{Hoch}}(A,M) $ is given by
\begin{align*}
 \delta_{\mathrm{Hoch}}(f)(a_1,a_2,\ldots,a_{n+1})  =~& a_1\cdot_M f(a_2,\ldots,a_{n+1}) + \sum_{i=1}^{n}(-1)^i f(a_1,\ldots, a_i\cdot a_{i+1},\ldots ,a_{n+1})\\
 & + (-1)^{n+1} f(a_1,\ldots,a_n)\cdot_M a_{n+1},
 \end{align*}
for $f\in C^n_{\mathrm{Hoch}}(A,M)$ and $a_1,a_2,\ldots,a_{n+1}\in A$. The corresponding cohomology groups are called the Hochschild cohomology groups of $A$ with coefficients in $M$. They are denoted by $H^\bullet_\mathrm{Hoch} (A, M)$.

\medskip

Given an associative algebra $A$ and an $A$-bimodule $M$, a linear map $P:M\rightarrow A$ is called a {\bf relative Rota-Baxter operator} on $A$ with respect to the bimodule $M$ if the linear map $P$ satisfies
\begin{align*}
 P(u)\cdot P(v) = P\big(P(u)\cdot_M v ~+~ u\cdot _MP(v)\big), \text{ for } u,v\in M.  
\end{align*}
\begin{defn}
A {\bf relative Rota-Baxter algebra} is a triple $(A,M,P)$ consisting of an associative algebra $A$, an $A$-bimodule $M$ and a relative Rota-Baxter operator $P:M\rightarrow A$.
\end{defn}

Let $(A,M,P)$ be a relative Rota-Baxter algebra. Then $M$ inherits an associative algebra structure with the multiplication given by
\begin{align*}
u \star v = P(u)\cdot_M v ~+~ u\cdot_MP(v),   \text{ for } u,v\in M.
\end{align*}
We denote this associative algebra simply by $M_P$. Moreover, there is an $M_P$-bimodule structure on $A$ with the left and right $M_P$-actions are given by
\begin{align*}
\overline{l} : M_P \otimes A\rightarrow A,~ ~  \overline{l}(u,a):= P(u)\cdot a - P(u\cdot_M a),\\
\overline{r}: A\otimes M_P\rightarrow A ,~~ \overline{r}(a,u):= a\cdot P(u)-P(a\cdot_ Mu),
\end{align*}
for $u\in M_P$ and $a\in A$. In \cite{das-rota} the author considered the Hochschild cohomology of the associative algebra $M_P$ with coefficients in the above $M_P$-bimodule $(A,\overline{l},\overline{r})$. It has been observed that such cohomology governs the formal deformation theory of the relative Rota-Baxter operator $P$. However, this cohomology is not suffice to study simultaneous deformations of the underlying algebra $A$, the $A$-bimodule $M$ and the relative Rota-Baxter operator $P$. To study such simultaneous deformations, the authors in \cite{das-mishra} considered a new cohomology theory associated with a relative Rota-Baxter algebra.

\medskip

Let $(A,M,P)$ be a relative Rota-Baxter algebra. Then for each $n \geq 0$, the $n$-th cochain group $C^n_{\mathrm{rRBA}}(A,M,P)$ of the relative Rota-Baxter algebra $(A,M,P)$ is given by
\begin{align*}
    C^n_{\mathrm{rRBA}}(A,M,P)=
\begin{cases}
    0,& \text{if } n= 0\\
    \mathrm{Hom}(A,A)\oplus \mathrm{Hom}(M,M),              & \text{if } n = 1\\
    \mathrm{Hom}(A^{\otimes n},A) \oplus \mathrm{Hom}(\mathcal{A}^{n-1,1},M) \oplus \mathrm{Hom}(M^{\otimes{n-1}},A),              & \text{if } n \geq 2.
\end{cases}
\end{align*}
Here $\mathcal{A}^{n-1,1}$ is the direct sum of all possible $n$ tensor powers of $A$ and $M$ in which $A$ appears $(n-1)$ times and $M$ appears exactly once. Then there is a map $\delta_{\mathrm{rRBA}} : C^n_{\mathrm{rRBA}}(A,M,P)\rightarrow C^{n+1}_{\mathrm{rRBA}}(A,M,P)$ given by
\begin{align*}
 \delta_{\mathrm{rRBA}}(\alpha,\beta,\gamma) = \big(\delta_{\mathrm{Hoch}}(\alpha),~~ \delta^\alpha_{\mathrm{Hoch}}(\beta),~~ \delta_{\mathrm{Hoch}}^{M_P, A} (\gamma) + h_P(\alpha,\beta)\big), \text{ for } (\alpha,\beta,\gamma)\in C^n_{\mathrm{rRBA}}(A,M,P).
\end{align*}
For any $\alpha \in \mathrm{Hom}(A^{\otimes n},A)$, the map $\delta^\alpha_{\mathrm{Hoch}} : \mathrm{Hom}(\mathcal{A}^{n-1,1},M)\rightarrow \mathrm{Hom}(\mathcal{A}^{n,1},M)$ in the above definition is given by
\begin{align*}
\delta^\alpha_{\mathrm{Hoch}}(\beta)(a_1,a_2,\ldots,a_{n+1}) =~& (l_M+r_M)\big(a_1,(\alpha+\beta)(a_2,\ldots,a_{n+1})\big)  \\
&+ \sum_{i=1}^{n} (-1)^i ~\beta \big( a_1,\ldots,(\mu+l_M+r_M)(a_i,a_{i+1}),\ldots,a_{n+1}\big) & \\
&+ (-1)^{n+1} (l_M+r_M)\big((\alpha+\beta)(a_1,\ldots,a_n),a_{n+1}\big),
\end{align*}
for $\beta\in \mathrm{Hom}(\mathcal{A}^{n-1,1},M)~~\mathrm{and}~~ (a_1 , a_2,\ldots, a_{n+1})~~\in \mathcal{A}^{n,1}$. The map $\delta_\mathrm{Hoch}^{M_P, A}$ is the Hochschild coboundary map of the associative algebra $M_P$ with coefficients in the $M_P$-bimodule $A$.
Finally, the map  
\begin{align*}
h_P:\mathrm{Hom}(A^{\otimes n},A)\oplus \mathrm{Hom}(\mathcal{A}^{n-1,1},M) \rightarrow  \mathrm{Hom}(M^{\otimes n},A) \text{ is given by}
\end{align*}
\begin{align*}
h_P (\alpha,\beta)(u_1,u_2,\ldots,u_n) = (-1)^n \big\{\alpha \big(  P(u_1),\ldots, P(u_n) \big) - \sum_{i=1}^{n} P\circ \beta \big( P(u_1),\ldots,u_i,\ldots,P(u_n) \big) \big\},
\end{align*}
for $u_1,\ldots,u_n \in M$. It has been shown that $(\delta_{\mathrm{rRBA}})^2=0$. In other words, $\{C^{\ast}_{\mathrm{rRBA}}(A,M,P), \delta_{\mathrm{rRBA}}\}$ is a cochain complex. The corresponding cohomology groups are called the cohomology of the relative Rota-Baxter algebra $(A,M,P)$.

\medskip

\section{Matching relative Rota-Baxter algebras and matching dendriform algebras}\label{sec3}
In this section, we first introduce matching relative Rota-Baxter algebras and study some of their basic properties. Next, we find some relations between matching relative Rota-Baxter algebras and the notion of matching dendriform algebras introduced in \cite{gao-guo-zhang}.

\begin{defn}   A {\bf matching relative Rota-Baxter algebra} is a triple $(A,M,\{P_x\}_{x\in X})$ consisting of an associative algebra $A$, an $A$-bimodule $M$ and a collection $\{P_x:M\rightarrow A\}_{x\in X}$ of linear maps satisfying
\begin{align}\label{matching-rel-rb-identity}
P_x(u)\cdot P_y(v) = P_x(u\cdot_M P_y(v)) + P_y(P_x(u)\cdot_M v),~~ \mathrm{for} ~u,v \in M ~\mathrm{and} ~x,y \in X.
\end{align}
\end{defn}

\begin{remark}
Let $X$ be a set equipped with some additional structure (e.g. semigroup, group, ring etc.) and let $\tau : X \rightarrow X$ be a set map (not necessarily preserving that additional structure). If $(A, M, \{ P_x \}_{x \in X})$ is a matching relative Rota-Baxter algebra, then it is easy to see that the triple $(A, M, \{ P^\tau_x \}_{x \in X})$ is a new matching relative Rota-Baxter algebra, where $P_x^\tau (u) = P_{\tau (x)} (u)$, for $x \in X$ and $u \in M$. This shows that the identity (\ref{matching-rel-rb-identity}) depends only on the set-theoretic property of $X$.
\end{remark}

\begin{exam} Let $(A,M,P)$ be a relative Rota-Baxter algebra. Then  $(A,M,\{P,-P\})$ is a matching relative Rota-Baxter algebra.
\end{exam}
\begin{exam}  Let $(A,M,P)$ is a relative Rota-Baxter algebra and $\{a_x\}_{x\in X}$ be a collection of elements in the centre of $A$. Then $(A,M,\{P_x\}_{x\in X})$ is a matching relative Rota-Baxter algebra, where
\begin{align*}
P_x(u)~=~P(a_x\cdot_Mu), \text{ for } x\in X \text{ and } u\in M.
\end{align*}
To show this, we observe that
 \begin{align*}
P_x(u)\cdot P_y(v)&=P(a_x\cdot_M u)\cdot P(a_y\cdot_Mv)\\
&=P\big(P(a_x\cdot_Mu)\cdot_M(a_y\cdot_Mv) + (a_x\cdot_Mu)\cdot_MP(a_y\cdot_Mv)\big)\\&=P\big(a_y\cdot(P_x(u)\cdot_Mv) + a_x\cdot(u\cdot_MP_y(v))\big)\\ &=P_y \big( P_x(u)\cdot_Mv \big) + P_x \big( u\cdot_MP_y(v) \big),
\end{align*}
for any $u,v\in M$ and $x,y\in X$. This proves our claim. In particular, if $A$ is a commutative associative algebra and $M$ is a symmetric $A$-bimodule, then for any collection of elements $\{a_x\}_{x\in X}$ in $A$, the triple $(A,M,\{P_x\}_{x\in X})$ is a matching relative Rota-Baxter algebra.
\end{exam}

\begin{exam}  Let $A:=\mathbb{K}[t]$ be the polynomial algebra in one variable $t$. Then $(A,A,\{P_k\}_{k\in {\mathbb{N}\cup \{ 0 \} } })$ is a matching (relative) Rota-Baxter algebra, where $P_k:A\rightarrow A$ is given by $P_k(t^n) =  {\frac{t^{k+n+1}}{k+n+1}}$,  for $k,n\geq 0$. This example can be seen as a particular case of the previous example.\end{exam}

\begin{defn}
 Let $(A,M,\{P_x\}_{x\in X})$ and $(A',M',\{P'_x\}_{x\in X})$ be two matching relative Rota-Baxter algebras. A {\bf morphism} of matching relative Rota-Baxter algebras from  $(A,M,\{P_x\}_{x\in X})$ to $(A',M',\{P'_x\}_{x\in X})$ consists of a pair $(\varphi,\psi)$ of an algebra homomorphism $\varphi :A\rightarrow A'$ and a linear map $\psi :M\rightarrow M'$ satisfying
\begin{center}
$\psi(a\cdot_Mu) = \varphi(a)\cdot_{M'}\psi(u), \quad \psi(u\cdot_Ma) = \psi(u)\cdot_{M'}\varphi(a) \quad \mathrm{and} \quad \varphi\circ P_x = P'_x\circ\psi$,
\end{center}
for all $a\in A, u\in M, x\in X$.
\end{defn}
The collection of all matching relative Rota-Baxter algebras and morphisms between them forms a category, denoted by $\mathbf{mrRBA}_X$. Note that the category of relative Rota-Baxter algebras and the category of matching Rota-Baxter algebras are a subcategory of $\mathbf{mrRBA}_X$.\\

In \cite{aguiar4} Aguiar introduced a notion of the `associative Yang-Baxter equation' (AYBE) as the associative analogue of the classical Yang-Baxter equation. He also observed that a solution of {AYBE} (also called an associative $\textbf{r}$-matrix) gives rise to a Rota-Baxter operator on the underlying algebra. Later, Uchino \cite{uchino} showed that a skew-symmetric associative $\mathbf{r}$-matrix gives rise to a new relative Rota-Baxter operator.

\medskip

Let $A$ be an associative algebra. For any two elements $\mathbf{r} = \sum{r_{(1)}\otimes r_{(2)}}$ and $\mathbf{s} = \sum{s_{(1)}\otimes s_{(2)}}$ of the tensor product $A\otimes A$, we consider the following three elements
\begin{align*}
\mathbf{r}_{13} \mathbf{s}_{12} = \sum{r_{(1)} \cdot s_{(1)}}\otimes & s_{(2)}\otimes r_{(2)} , \quad \mathbf{r}_{12}~ \mathbf{s}_{23} = \sum r_{(1)}\otimes r_{(2)} \cdot s_{(1)} \otimes s_{(2)},\\ &\mathbf{r}_{23}~\mathbf{s}_{13} = \sum s_{(1)}\otimes r_{(1)} \otimes {r_{(2)} \cdot s_{(2)}}
\end{align*}
of $A\otimes A\otimes A$. An element $\mathbf{r} = \sum{r_{(1)}\otimes r_{(2)}} \in A\otimes A$ is called an {\bf associative ${r}$-matrix} if it satisfies the following equation, called the associative Yang-Baxter equation:
\begin{align}
\mathbf{r}_{13} \mathbf{r}_{12} - \mathbf{r}_{12} \mathbf{r}_{23} + \mathbf{r}_{23} \mathbf{r}_{13} = 0.  
\end{align}
In the following, we generalize the notion associative ${r}$-matrix in the context of matching algebras. More precisely, we introduce the following definition.
\begin{defn}  Let $A$ be an associative algebra. A collection $\{ {\bf r}^x=\sum{r^{x}_{(1)} \otimes r^{x}_{(2)}}\}_{x\in X}$ of elements of $A\otimes A$ is said to be a {\bf matching associative ${r}$-matrix} if
\begin{align}
\mathbf{r}^{y}_{13} \mathbf{r}^{x}_{12} - \mathbf{r}^{x}_{12} \mathbf{r}^{y}_{23} + \mathbf{r}^{y}_{23} \mathbf{r}^{x}_{13} = 0, ~\mathrm{for~ all}~x,y~ \in X.\label{x}  
\end{align}
\end{defn}
The equation (\ref{x}) is called the {\bf matching associative Yang-Baxter equation}. Therefore, matching associative $r$-matrices are solutions of the matching associative Yang-Baxter equation.

\medskip

A matching associative $r$-matrix $\{ {\bf r}^x=\sum{r^{x}_{(1)} \otimes r^{x}_{(2)}}\}_{x\in X}$ is said to be {\bf skew-symmetric} if ${\bf r}^x =  - \sigma ({\bf r}^x), \text{ for all } x \in X.$ Here $\sigma : A \otimes A \rightarrow A \otimes A$ is the flip map. Thus,  $\{ {\bf r}^x=\sum{r^{x}_{(1)} \otimes r^{x}_{(2)}}\}_{x\in X}$ is skew-symmetric if and only if
\begin{align*}
  \sum r^x_{(1)} \otimes r^x_{(2)} = - \sum r^x_{(2)} \otimes r^x_{(1)}, \text{ for all } x \in X.
\end{align*}

\medskip

\begin{prop}
Let $A$ be an associative algebra and $\{r^x=\sum{r^{x}_{(1)} \otimes r^{x}_{(2)}}\}_{x\in X}$ be a matching associative ${r}$-matrix. Then for any $A$-bimodule $M$, the triple $(A,M,\{P_x\}_{x\in X})$  is a matching relative Rota-Baxter algebra, where $P_x(u) = \sum r^{x}_{(1)}\cdot_Mu\cdot_Mr^{x}_{(2)},~\mathrm{for}~u\in M.$
\end{prop}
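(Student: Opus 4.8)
The statement is a direct analogue of Uchino's result that a skew-symmetric associative $r$-matrix yields a relative Rota-Baxter operator, adapted to the matching setting. Actually, looking at the definition $P_x(u) = \sum r^x_{(1)} \cdot_M u \cdot_M r^x_{(2)}$, note that skew-symmetry is \emph{not} needed here: the plan is to verify the defining identity \eqref{matching-rel-rb-identity} by direct expansion. First I would fix $x, y \in X$ and $u, v \in M$, write ${\bf r}^x = \sum r^x_{(1)} \otimes r^x_{(2)}$ and ${\bf r}^y = \sum r^y_{(1)} \otimes r^y_{(2)}$, and compute the left-hand side $P_x(u) \cdot P_y(v)$. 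This expands to $\sum (r^x_{(1)} \cdot_M u \cdot_M r^x_{(2)}) \cdot (r^y_{(1)} \cdot_M v \cdot_M r^y_{(2)})$; using the bimodule axioms to move the algebra multiplications through the actions, this becomes $\sum r^x_{(1)} \cdot_M u \cdot_M (r^x_{(2)} \cdot r^y_{(1)}) \cdot_M v \cdot_M r^y_{(2)}$ — a well-defined element of $M$ because all the actions associate with one another.

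Next I would expand the two terms on the right-hand side. For the first term, $P_x(u \cdot_M P_y(v)) = P_x\big(u \cdot_M \sum r^y_{(1)} \cdot_M v \cdot_M r^y_{(2)}\big) = \sum r^x_{(1)} \cdot_M u \cdot_M r^y_{(1)} \cdot_M v \cdot_M r^y_{(2)} \cdot_M r^x_{(2)}$, where I've written $\cdot_M r^y_{(2)} \cdot_M r^x_{(2)}$ but should rewrite the rightmost piece as the single action $\cdot_M (r^y_{(2)} \cdot r^x_{(2)})$ using the right-module axiom. Similarly $P_y(P_x(u) \cdot_M v) = \sum r^y_{(1)} \cdot_M (r^x_{(1)} \cdot r^x_{(2)}) \cdots$ — wait, more carefully: $P_y\big(\sum r^x_{(1)} \cdot_M u \cdot_M r^x_{(2)} \cdot_M v\big) = \sum r^y_{(1)} \cdot_M r^x_{(1)} \cdot_M u \cdot_M r^x_{(2)} \cdot_M v \cdot_M r^y_{(2)}$, with $r^y_{(1)} \cdot_M r^x_{(1)}$ consolidated as $(r^y_{(1)} \cdot r^x_{(1)}) \cdot_M$. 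So the right-hand side becomes a sum of two families of elements of $M$, each of the shape $(\text{word in } A) \cdot_M u \cdot_M (\text{word in } A) \cdot_M v \cdot_M (\text{word in } A)$.

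The key step is then to recognize that the difference (LHS) $-$ (RHS), viewed as a sum of terms each of the form $p \cdot_M u \cdot_M q \cdot_M v \cdot_M s$ with $p \otimes q \otimes s$ running over elements of $A^{\otimes 3}$, is precisely the image of the expression $\mathbf{r}^{y}_{13}\mathbf{r}^{x}_{12} - \mathbf{r}^{x}_{12}\mathbf{r}^{y}_{23} + \mathbf{r}^{y}_{23}\mathbf{r}^{x}_{13} \in A^{\otimes 3}$ under the $\mathbb{K}$-linear map $A^{\otimes 3} \to M$, $p \otimes q \otimes s \mapsto p \cdot_M u \cdot_M q \cdot_M v \cdot_M s$. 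Matching the three terms against the definitions $\mathbf{r}^{y}_{13}\mathbf{r}^{x}_{12} = \sum r^x_{(1)} \cdot r^y_{(1)} \otimes r^x_{(2)} \otimes r^y_{(2)}$, $\mathbf{r}^{x}_{12}\mathbf{r}^{y}_{23} = \sum r^x_{(1)} \otimes r^x_{(2)} \cdot r^y_{(1)} \otimes r^y_{(2)}$, and $\mathbf{r}^{y}_{23}\mathbf{r}^{x}_{13} = \sum r^x_{(1)} \otimes r^y_{(1)} \otimes r^y_{(2)} \cdot r^x_{(2)}$, one checks that the middle term $-\mathbf{r}^{x}_{12}\mathbf{r}^{y}_{23}$ maps to $-P_x(u) \cdot P_y(v)$ (after consolidation), while $\mathbf{r}^{y}_{13}\mathbf{r}^{x}_{12}$ and $\mathbf{r}^{y}_{23}\mathbf{r}^{x}_{13}$ map to $P_y(P_x(u) \cdot_M v)$ and $P_x(u \cdot_M P_y(v))$ respectively. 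Since $\{\mathbf{r}^x\}_{x \in X}$ satisfies the matching associative Yang-Baxter equation \eqref{x}, that element of $A^{\otimes 3}$ is zero, hence so is its image in $M$, giving \eqref{matching-rel-rb-identity}. I expect the only real obstacle to be bookkeeping: keeping the indices $(1),(2)$ on $\mathbf{r}^x$ versus $\mathbf{r}^y$ straight and correctly matching which tensor slot corresponds to which bimodule action, so that the three cyclic-looking terms line up with the correct signs; once the dictionary between $A^{\otimes 3}$-slots and the expression $p \cdot_M u \cdot_M q \cdot_M v \cdot_M s$ is fixed, the verification is routine.
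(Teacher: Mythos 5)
Your argument is correct and is essentially the paper's own proof: the paper likewise rewrites the matching associative Yang--Baxter equation as the explicit identity $\sum r^{y}_{(1)}\cdot r^{x}_{(1)}\otimes r^{x}_{(2)}\otimes r^{y}_{(2)} - \sum r^{x}_{(1)}\otimes r^{x}_{(2)}\cdot r^{y}_{(1)}\otimes r^{y}_{(2)} + \sum r^{x}_{(1)}\otimes r^{y}_{(1)}\otimes r^{y}_{(2)}\cdot r^{x}_{(2)} = 0$ in $A^{\otimes 3}$ and then applies exactly your map $p\otimes q\otimes s\mapsto p\cdot_M u\cdot_M q\cdot_M v\cdot_M s$ to obtain $P_y(P_x(u)\cdot_M v)-P_x(u)\cdot P_y(v)+P_x(u\cdot_M P_y(v))=0$. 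The only slip is a transcription one: with the paper's convention $\mathbf{r}_{13}\mathbf{s}_{12}=\sum r_{(1)}\cdot s_{(1)}\otimes s_{(2)}\otimes r_{(2)}$, the first term is $\sum r^{y}_{(1)}\cdot r^{x}_{(1)}\otimes r^{x}_{(2)}\otimes r^{y}_{(2)}$ rather than $\sum r^{x}_{(1)}\cdot r^{y}_{(1)}\otimes r^{x}_{(2)}\otimes r^{y}_{(2)}$, which is in fact what your own (correct) expansion of $P_y(P_x(u)\cdot_M v)$ produces, so your final dictionary between the three AYBE terms and the three terms of the Rota--Baxter identity is right.
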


\begin{proof} Note that the identity (\ref{x}) is equivalent to
\begin{align}\label{aybe22}
\sum{r^{y}_{(1)}\cdot r^{x}_{(1)}\otimes r^{x}_{(2)}\otimes r^{y}_{(2)}} - \sum{r^{x}_{(1)}\otimes r^{x}_{(2)}\cdot r^{y}_{(1)}\otimes r^{y}_{(2)}} + \sum{r^{x}_{(1)}\otimes r^{y}_{(1)}\otimes r^{y}_{(2)}\cdot r^{x}_{(2)}} = 0.
\end{align}
In the above three terms, if we replace the first tensor product by $u$ and the second tensor product by $v$, we obtain
\begin{align*}
P_y(P_x(u)\cdot_Mv) - P_x(u)\cdot P_y(v) + P_x(u\cdot P_y(v)) = 0.
\end{align*}
This shows that $(A,M,\{P_x\}_{x\in X})$ is a matching relative Rota-Baxter algebra.
\end{proof}

\begin{prop} Let $A$ be an associative algebra and $\{ {\bf r}^x=\sum{r^{x}_{(1)}\otimes r^{x}_{(2)}}\}_{x\in X}$ be a skew-symmetric matching associative ${r}$-matrix. Then the triple $\big(A,A^{\star},\{P_x:A^{\star}\rightarrow A\}_{x\in X}\big)$ is a matching relative Rota-Baxter algebra, where $P_x:A^{\star}\rightarrow A$ is given by
\begin{align*}
P_x (\alpha) := \sum{\alpha(r^x_{(2)})}~ r^x_{(1)} = -\sum{\alpha(r^x_{(1)})}~r^x_{(2)}, \text{ for }\alpha\in A^{\star},x\in X.  
\end{align*}
Here $A^*$ is equipped with the coadjoint $A$-bimodule structure which is dual to the adjoint $A$-bimodule.
\end{prop}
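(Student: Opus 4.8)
The plan is to verify the matching relative Rota-Baxter identity (\ref{matching-rel-rb-identity}) for the triple $(A,A^\star,\{P_x\}_{x\in X})$ directly, by contracting the matching associative Yang--Baxter equation against a pair of covectors. First I would note that each $P_x$ is well defined: applying $\alpha\otimes\mathrm{id}$ and $\mathrm{id}\otimes\alpha$ to the skew-symmetry relation $\mathbf r^x=-\sigma(\mathbf r^x)$ gives $\sum\alpha(r^x_{(2)})\,r^x_{(1)}=-\sum\alpha(r^x_{(1)})\,r^x_{(2)}$, so the two expressions for $P_x(\alpha)$ coincide. I would also fix the coadjoint $A$-bimodule structure on $A^\star$, namely $(a\cdot_{A^\star}\alpha)(b)=\alpha(b\cdot a)$ and $(\alpha\cdot_{A^\star}a)(b)=\alpha(a\cdot b)$ for $a,b\in A$ and $\alpha\in A^\star$. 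It is worth remarking that this $P_x$ is not of the form $u\mapsto\sum r^x_{(1)}\cdot_{A^\star}u\cdot_{A^\star}r^x_{(2)}$ occurring in the previous proposition — that expression would land in $A^\star$, not in $A$ — so a separate computation is genuinely required.

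For the main step I would rewrite the matching associative Yang--Baxter equation in the equivalent form (\ref{aybe22}) and apply to it the linear map $A^{\otimes 3}\to A$, $a_1\otimes a_2\otimes a_3\mapsto u(a_1)\,a_2\,v(a_3)$, that is, pair the two \emph{outer} tensor legs against fixed covectors $u,v\in A^\star$ and retain the middle one. I expect the three terms of (\ref{aybe22}) to evaluate, in order, to $-P_x(u\cdot_{A^\star}P_y(v))$, $-P_x(u)\cdot P_y(v)$, and $-P_y(P_x(u)\cdot_{A^\star}v)$, whereupon (\ref{aybe22}) becomes precisely (\ref{matching-rel-rb-identity}). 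Each of these evaluations is obtained by collapsing every $\mathbf r^x$-leg via $P_x(\cdot)=-\sum(\cdot)(r^x_{(1)})\,r^x_{(2)}$ and every $\mathbf r^y$-leg via $P_y(\cdot)=\sum(\cdot)(r^y_{(2)})\,r^y_{(1)}$, and then recognizing the intermediate functional that appears. For instance, in the first term one contracts the $\mathbf r^y$-legs first, producing $P_y(v)$, then contracts the $\mathbf r^x$-legs and reads off the functional $a\mapsto u(P_y(v)\cdot a)$, which is exactly $u\cdot_{A^\star}P_y(v)$; the third term similarly produces the functional $b\mapsto v(b\cdot P_x(u))=\big(P_x(u)\cdot_{A^\star}v\big)(b)$, and the middle term collapses directly to $-P_x(u)\cdot P_y(v)$.

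The argument is essentially bookkeeping, so the main obstacle is organizational rather than conceptual. One has to contract the \emph{outer} two legs of (\ref{aybe22}): contracting an adjacent pair instead yields an identity in which the labels $x$ and $y$ end up in the wrong slots and which is not of the shape (\ref{matching-rel-rb-identity}). One also has to be disciplined about which of the two (equal, by skew-symmetry) expressions for $P_x$ and $P_y$ to substitute at each occurrence of a tensor leg — this is exactly where skew-symmetry is used, and an inconsistent choice destroys the cancellation. Once the three contractions are matched with the three terms of (\ref{matching-rel-rb-identity}), the proof is finished.
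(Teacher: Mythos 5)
Your proposal is correct and follows essentially the same route as the paper: both arguments contract the equivalent form (\ref{aybe22}) of the matching associative Yang--Baxter equation by applying $u$ to the first tensor factor and $v$ to the third, and then identify the three resulting terms with $-P_x(u\cdot_{A^\star}P_y(v))$, $-P_x(u)\cdot P_y(v)$ and $-P_y(P_x(u)\cdot_{A^\star}v)$ using skew-symmetry to choose the convenient expression for each operator. The sign bookkeeping in your term-by-term identification checks out, so the proof is complete.
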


\begin{proof} For any $\alpha,\beta\in A^{\star}$ and $x,y\in X$, we have
\begin{align}\label{ss1}
 P_x(\alpha)\cdot P_y(\beta) = -\sum{\alpha(r^x_{(1)})}\beta(r^y_{(2)}) ~ r^x_{(2)}\cdot r^y_{(1)}.  
\end{align}
On the other hand, we have
\begin{align}\label{ss2}
& P_x(\alpha\cdot_{A^\star}P_y(\beta)) + P_y(P_x(\alpha)\cdot_{A^\star}\beta) \nonumber \\
&=\sum{P_x\big(\beta(r^y_{(2)})~\alpha\cdot_{A^\star}r^y_{(1)}\big)} - \sum{P_y\big(\alpha(r^x_{(1)}) ~r^x_{(2)}\cdot_{A^\star}\beta\big)} \nonumber \\
&=-\sum{\beta(r^y_{(2)})(\alpha\cdot_{A^\star}r^y_{(1)})(r^x_{(1)})~ r^x_{(2)}}-\sum{\alpha(r^x_{(1)})(r^x_{(2)}\cdot_{A^{\star}}\beta)}(r^x_{(2)})~ r^y_{(1)} \nonumber\\
&=-\sum{\beta(r^y_{(2)})~\alpha(r^y_{(1)}\cdot r^x_{(1)})} ~r^x_{(2)} -\sum{\alpha(r^x_{(1)})~\beta(r^y_{(2)}\cdot r^x_{(2)})~ r^y_{(1)}}.
\end{align}
Note that the matching associative ${r}$-matrix condition (\ref{x}) is equivalent to (\ref{aybe22}). In these tensor products, if we apply $\alpha$ to the first component and $\beta$ to the third component, we get
\begin{align*}
\sum{\alpha(r^y_{(1)}\cdot r^x_{(1)})~\beta(r^y_{(2)})~r^x_{(2)}} - \sum{\alpha(r^x_{(1)}) ~\beta(r^y_{(2)}) ~r^x_{(2)}\cdot r^y_{(1)}} + \sum{\alpha(r^x_{(1)})~\beta(r^y_{(2)}\cdot r^x_{(2)})~r^y_{(1)}} = 0.
\end{align*}
Hence it follows from (\ref{ss1}) and (\ref{ss2}) that
\begin{align*}
P_x(\alpha)\cdot P_y(\beta) =  P_x(\alpha\cdot_{A^\star}P_y(\beta)) + P_y(P_x(\alpha)\cdot_{A^\star}\beta).
\end{align*}
This completes the proof.
\end{proof}

\medskip

The notion of dendriform algebra was first introduced by Loday \cite{loday} in the study of Koszul duality of diassociative algebras. Dendriform algebras are given by two operations satisfying a set of three identities. It turns out that the sum of two operations is associative. Thus, dendriform algebras split associative algebras. In \cite{gao-guo-zhang}, the authors considered matching dendriform algebras as the matching version of dendriform algebras. In the following, we find some relations between matching relative Rota-Baxter algebras and matching dendriform algebras.

\begin{defn}  A {\bf matching dendriform algebra} is a pair $(D,\{\prec_x,\succ_x\}_{x\in X})$ consisting of a $\mathbb{K}$-module $D$ together with a collection $\{\prec_x,\succ_x:D\otimes D\rightarrow D\}_{x\in X}$ of linear maps satisfying
\begin{align}
(a\prec_x b)\prec_y c&~= a\prec_x(b\prec_y c) + a\prec_y(b\succ_x c), \label{mda-1}\\
(a\succ_x b)\prec_y c &~= a\succ_x(b\prec_yc), \label{mda-2}\\
(a\prec_y b)\succ_x c + (a\succ_x b)\succ_y c &~= a\succ_x(b\succ_y c), ~\mathrm{for}~ \mathrm{all}~ a,b,c\in D \text{ and } x,y\in X. \label{mda-3}
\end{align}
\end{defn}

\medskip
A matching dendriform algebra $(D,\{\prec_x,\succ_x\}_{x\in X})$ in which all $\prec_x$ are the same and all $\succ_x$ are the same, is nothing but an ordinary dendriform algebra.

\begin{defn}
Let $(D,\{\prec_x,\succ_x\}_{x\in X})$ and $(D',\{\prec'_x,\succ'_x\}_{x\in X})$ be two matching dendriform algebras. A {\bf morphism} of matching dendriform algebras from $(D,\{\prec_x,\succ_x\}_{x\in X})$ to $(D',\{\prec'_x,\succ'_x\}_{x\in X})$ is given by a linear map $f:D\rightarrow D'$ satisfying
\begin{align*}
f(a\prec_x b) = f(a)\prec'_xf(b)~\mathrm{and}~ f(a\succ_x b) = f(a)\succ'_x f(b),~ \text{for } a,b\in D \text{ and } x\in X.  
\end{align*}
\end{defn}

The collection of all matching dendriform algebras and morphisms between them forms a category, denoted by $\textbf{mDend}_X$. There is a close connection between matching relative Rota-Baxter algebras and matching dendriform algebras which we will describe now.

\begin{prop}  (i) Let  $(A,M,\{P_x\}_{x\in X})$ be a matching relative Rota-Baxter algebra. Then the pair $(M,\{\prec_x,\succ_x\}_{x\in X})$ is a matching dendriform algebra, where the operations $\prec_x,\succ_x:M\otimes M\rightarrow M$ are given by
\begin{align*}
u\prec_x v=u\cdot_MP_x(v)~~ \text{ and } ~~u\succ_xv=P_x(u)\cdot_Mv,~\text{for}~u,v\in M, x\in X.    
\end{align*}

(ii) If $(A,M,\{P_x\}_{x\in X})$ and $(A',M',\{P'_x\}_{x\in X})$ are two matching relative Rota-Baxter algebras, and $(\varphi,\psi)$ is a morphism between them, then the linear map $\psi : M\rightarrow M'$ is a morphism of induced matching dendriform algebras from  $(M,\{\prec_x,\succ_x\}_{x\in X})$ to $(M',\{\prec'_x,\succ'_x\}_{x\in X})$.
\end{prop}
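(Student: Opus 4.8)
The plan is to verify directly the three defining identities (\ref{mda-1})--(\ref{mda-3}) of a matching dendriform algebra for the operations $\prec_x,\succ_x$, using only the $A$-bimodule axioms on $M$ together with the matching relative Rota-Baxter identity (\ref{matching-rel-rb-identity}). Concretely, for $u,v,w\in M$ and $x,y\in X$, I would rewrite each side of an identity by pushing all the $P$'s inward or outward via the three bimodule relations $(a\cdot b)\cdot_M u = a\cdot_M(b\cdot_M u)$, $(a\cdot_M u)\cdot_M b = a\cdot_M(u\cdot_M b)$ and $(u\cdot_M a)\cdot_M b = u\cdot_M(a\cdot b)$, so that each side reduces to a normal form such as $u\cdot_M(\text{element of }A)$, $P_x(u)\cdot_M(v\cdot_M P_y(w))$, or $(P_x(u)\cdot P_y(v))\cdot_M w$. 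The defining identity (\ref{matching-rel-rb-identity}) then matches the two normal forms.

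In detail, for (\ref{mda-1}) I would expand $(u\prec_x v)\prec_y w = (u\cdot_M P_x(v))\cdot_M P_y(w) = u\cdot_M\big(P_x(v)\cdot P_y(w)\big)$, while $u\prec_x(v\prec_y w) + u\prec_y(v\succ_x w) = u\cdot_M\big(P_x(v\cdot_M P_y(w)) + P_y(P_x(v)\cdot_M w)\big)$; the two agree by (\ref{matching-rel-rb-identity}) applied to $v,w$. For (\ref{mda-2}) both $(u\succ_x v)\prec_y w$ and $u\succ_x(v\prec_y w)$ reduce to $P_x(u)\cdot_M(v\cdot_M P_y(w))$ using only the bimodule axioms, so no Rota-Baxter identity is needed there. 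For (\ref{mda-3}) the left side $(u\prec_y v)\succ_x w + (u\succ_x v)\succ_y w$ equals $\big(P_x(u\cdot_M P_y(v)) + P_y(P_x(u)\cdot_M v)\big)\cdot_M w$, which by (\ref{matching-rel-rb-identity}) is $(P_x(u)\cdot P_y(v))\cdot_M w = P_x(u)\cdot_M(P_y(v)\cdot_M w) = u\succ_x(v\succ_y w)$, the right side.

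For part (ii) I would simply compute using the three morphism conditions $\psi(a\cdot_M u) = \varphi(a)\cdot_{M'}\psi(u)$, $\psi(u\cdot_M a) = \psi(u)\cdot_{M'}\varphi(a)$ and $\varphi\circ P_x = P'_x\circ\psi$: namely $\psi(u\prec_x v) = \psi(u\cdot_M P_x(v)) = \psi(u)\cdot_{M'}\varphi(P_x(v)) = \psi(u)\cdot_{M'}P'_x(\psi(v)) = \psi(u)\prec'_x\psi(v)$, and analogously $\psi(u\succ_x v) = \varphi(P_x(u))\cdot_{M'}\psi(v) = P'_x(\psi(u))\cdot_{M'}\psi(v) = \psi(u)\succ'_x\psi(v)$. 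Hence $\psi$ is a morphism of the induced matching dendriform algebras.

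There is no genuine obstacle here; the proof is a routine unwinding of definitions. The only point requiring a little care is to invoke the correct bimodule axiom at each step — the three axioms differ according to where the module slot sits — and to keep track of which of the two summands on the right-hand side of (\ref{matching-rel-rb-identity}) matches which term after normalization.
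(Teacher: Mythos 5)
Your proposal is correct and follows essentially the same route as the paper's own proof: each of the three matching dendriform identities is verified by normalizing both sides via the bimodule axioms and then invoking the matching relative Rota-Baxter identity (\ref{matching-rel-rb-identity}) on the relevant pair of module elements, with (\ref{mda-2}) needing only the bimodule axioms; part (ii) is the same direct computation from the morphism conditions. No gaps.
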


\begin{proof}
(i) For any $u,v,w\in M$ and $x,y\in X$, we have
\begin{align*}
(u\prec_xv)\prec_yw&= (u\cdot_M P_x(v))~._M~P_y(w)\\
&=~u\cdot_M(P_x(v)\cdot P_y(w))\\
&=~u\cdot_MP_x(v\cdot_MP_y(w)) ~+~ u\cdot_MP_y(P_x(v)\cdot_Mw)\\
&=~u\prec_x(v\prec_yw)~+~u\prec_y(v\succ_xw).
\end{align*}
Similarly,
\begin{align*}
(u\succ_xv)\prec_yw =(P_x(u)\cdot_Mv)\cdot_MP_y(w) = P_x(u)\cdot_M(v\cdot_MP_y(w))= u\succ_x(v\prec_yw),
\end{align*}
\begin{align*}
(u\prec_yv)\succ_xw~+~(u\succ_xv)\succ_yw &=~P_x(u\cdot_MP_y(v))\cdot_Mw~+~P_y(P_x(u)\cdot_Mv)\cdot_Mw\\
&=~(P_x(u)\cdot P_y(v))\cdot_Mw\\
&=~P_x(u)\cdot_M(P_y(v)\cdot_Mw)=u\succ_x (v\succ_y w).
\end{align*}
This shows that $(M,\{\prec_x,\succ_x\}_{x\in X})$ is a matching dendriform algebra.

\medskip

(ii) For any $u,v\in M$ and $x\in X$, we have
\begin{align*}
 \psi(u\prec_xv) = \psi(u\cdot_MP_x(v)) = \psi(u)\cdot_{M'}\varphi(P_x(v)) = \psi(u)\cdot_{M'}P'_x(\psi(v)) = \psi(u)\prec'_x\psi(v),\\
 \psi(u\succ_xv) = \psi(P_x(u)\cdot_Mu) = \varphi(P_x(u))\cdot_{M'} \psi(v) = P'_x (\psi(u))\cdot_{M'}\psi(v) = \psi(u)\succ'_x\psi(v).
\end{align*}
This shows that $\psi$ is a morphism of matching dendriform algebras.
\end{proof}

\medskip

It follows from the above proposition that there is a functor $\mathscr{F}:\mathbf{mrRBA}_X\rightarrow \mathbf{mDend}_X$  from the category of matching relative Rota-Baxter algebras to the category of matching dendriform algebras. In the following, we will construct a functor in the other direction. We start with the following result.

\begin{prop} Let $(D,\{\prec_x,\succ_x\}_{x\in X})$ be a matching dendriform algebra. Then $(D\otimes {\mathbb{K}[X]},\prec,\succ)$ is an ordinary dendriform algebra, where
\begin{align*}
(a\otimes x)\prec(b\otimes y) := (a\prec_yb)\otimes x~\mathrm{and}~(a\otimes x)\succ(b\otimes y) := (a\succ_xb)\otimes y,
\end{align*}
for $a\otimes x,~b\otimes y \in D\otimes {\mathbb{K}[X]}$.
\end{prop}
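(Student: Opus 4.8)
The statement to prove is that if $(D,\{\prec_x,\succ_x\}_{x\in X})$ is a matching dendriform algebra, then $D\otimes\mathbb{K}[X]$ with the operations $(a\otimes x)\prec(b\otimes y):=(a\prec_y b)\otimes x$ and $(a\otimes x)\succ(b\otimes y):=(a\succ_x b)\otimes y$ is an ordinary dendriform algebra. So I must verify the three dendriform axioms
\[
(\alpha\prec\beta)\prec\gamma=\alpha\prec(\beta\prec\gamma)+\alpha\prec(\beta\succ\gamma),\qquad
(\alpha\succ\beta)\prec\gamma=\alpha\succ(\beta\prec\gamma),
\]
\[
(\alpha\prec\beta)\succ\gamma+(\alpha\succ\beta)\succ\gamma=\alpha\succ(\beta\succ\gamma)
\]
for $\alpha=a\otimes x$, $\beta=b\otimes y$, $\gamma=c\otimes z$ in $D\otimes\mathbb{K}[X]$. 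Since all operations are $\mathbb{K}$-bilinear, it suffices to check these on elementary tensors, which is what I would do.

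\textbf{First step: bookkeeping of the labels.} The only subtlety here is tracking which label survives in each product and which label gets used as the subscript of the $D$-operation. The defining rule is: in $\prec$ the left label survives and the \emph{right} label indexes the operation; in $\succ$ the right label survives and the \emph{left} label indexes the operation. So I would compute each of the three axioms by expanding the left-hand and right-hand sides purely mechanically. For instance, for the first axiom, $(\alpha\prec\beta)\prec\gamma=((a\prec_y b)\otimes x)\prec(c\otimes z)=\big((a\prec_y b)\prec_z c\big)\otimes x$, while $\alpha\prec(\beta\prec\gamma)=(a\otimes x)\prec\big((b\prec_z c)\otimes y\big)=\big(a\prec_z(b\prec_z c)\big)\otimes x$ — \emph{wait}, one must be careful: $\beta\prec\gamma=(b\prec_z c)\otimes y$ carries label $y$, so $\alpha\prec(\beta\prec\gamma)$ uses subscript $y$, giving $\big(a\prec_y(b\prec_z c)\big)\otimes x$; similarly $\beta\succ\gamma=(b\succ_y c)\otimes z$, so $\alpha\prec(\beta\succ\gamma)=\big(a\prec_z(b\succ_y c)\big)\otimes x$. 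Thus the first dendriform axiom for $D\otimes\mathbb{K}[X]$ reduces, after stripping the common tensor factor $\otimes x$, exactly to the matching identity (\ref{mda-1}) with the triple of labels $(x,y,z)$ replaced by $(y,z,y)$ in the appropriate slots — i.e. it is an instance of (\ref{mda-1}). I would do the analogous expansion for the other two axioms and observe that (\ref{mda-2}) and (\ref{mda-3}) of the matching dendriform algebra yield them verbatim after matching up the labels.

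\textbf{Main obstacle.} There is no real mathematical obstacle; the proof is a direct verification. The one place to be genuinely careful is the label bookkeeping described above: because $\prec$ propagates the left label while taking its subscript from the right, and $\succ$ does the opposite, it is easy to plug the wrong label into a subscript. The clean way to present this, and what I would do, is to write out all three computations side by side, reducing each to an instance of (\ref{mda-1}), (\ref{mda-2}), (\ref{mda-3}) respectively, and then invoke those identities. I would also remark (as a sanity check, and because it is used implicitly elsewhere) that the associated sum operation $\prec+\succ$ on $D\otimes\mathbb{K}[X]$ is then automatically associative, which is consistent with the fact that summing (\ref{mda-1})–(\ref{mda-3}) over the matching structure gives the associativity of the corresponding "total" product; but this is a consequence, not part of what must be proved. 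Finally I would note bilinearity extends the verification from elementary tensors to all of $D\otimes\mathbb{K}[X]$, completing the proof.
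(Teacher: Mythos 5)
Your proposal is correct and follows essentially the same route as the paper: a direct check of the three dendriform axioms on elementary tensors, reducing each to an instance of the matching identities (\ref{mda-1})--(\ref{mda-3}) after tracking which label survives and which label indexes the operation. The label bookkeeping in your first-axiom computation matches the paper's displayed calculation exactly (the parenthetical about replacing ``$(x,y,z)$ by $(y,z,y)$'' is slightly garbled, since (\ref{mda-1}) carries only two labels, but the substitution $x\mapsto y$, $y\mapsto z$ you actually perform is the right one).
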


\begin{proof} For any $a\otimes x,b\otimes y~ \mathrm{and}~ c\otimes z \in D\otimes {\mathbb{K}[X]}$, we have
\begin{align*}
\big((a\otimes x)\prec(b\otimes y)\big)\prec(c\otimes z) &= \big((a\prec_yb)\otimes x\big)\prec(c\otimes z)\\
&=\big((a\prec_yb)\prec_zc\big)\otimes x\\
&=\big(a\prec_y(b\prec_zc)\big)\otimes x+\big(a\prec_z(b\succ_yc)\big)\otimes x\\
&=(a\otimes x)\prec\big((b\prec_zc)\otimes y+(b\succ_yc)\otimes{z})\big)\\
&=(a\otimes x)\prec \big((b\otimes y)\prec(c\otimes z)+(b\otimes y)\succ(c\otimes z)\big).
\end{align*}
Similarly, we can verify that
\begin{align*}
\big((a\otimes x)\succ(b\otimes y)\big)\prec(c\otimes z)=(a\otimes x)\succ \big((b\otimes y))\prec(c\otimes z)\big),
\end{align*}
\begin{align*}
\big((a\otimes x)\prec(b\otimes y) ~+~ (a\otimes x)\succ(b\otimes y)\big)\succ(c\otimes z) = (a\otimes x)\succ\big((b\otimes y))\succ(c\otimes z)\big).
\end{align*}
This completes the proof.
\end{proof}

Let $(D,\{\prec_x,\succ_x\}_{x\in X})$ be a matching dendriform algebra. Then it follows from the above proposition that the space $D\otimes {\mathbb{K}[X]}$ inherits an associative algebra structure with the multiplication given by
\begin{align*}
(a\otimes x) \bullet (b\otimes y) =(a\prec_yb)\otimes{x} ~+~ (a\succ_xb)\otimes{y},   \text{ for } (a\otimes x),(b\otimes y)\in D\otimes {\mathbb{K}[X]}.
\end{align*}
Moreover, there is a $(D\otimes {\mathbb{K}[X]})$-bimodule structure on $D$ with the left and right action maps
\begin{align*}
(a\otimes x)\cdot_Db=a\succ_xb ~ \text{ and } ~ b\cdot_D(a\otimes x)=b\prec_xa, \text{ for } a\otimes x\in D\otimes {\mathbb{K}[X]},~b\in D.
\end{align*}
With all these notations, the triple $(D\otimes {\mathbb{K}[X]},D,\{\mathrm{id}_x \}_{x\in X})$ is a matching relative Rota-Baxter algebra, where the map $\mathrm{id}_x:D\rightarrow D\otimes {\mathbb{K}[X]}$ is given by $\mathrm{id}_x(a)=a\otimes{x}$, for $x\in X$. Moreover, the induced matching dendriform algebra structure on $D$ coincides with the given one. Thus, we have constructed a matching relative Rota-Baxter algebra from a matching dendriform algebra. This construction is also functorial. Hence we obtain a functor $\mathscr{G}:\mathbf{mDend}_X\rightarrow \mathbf{mrRBA}_X$. In the following result, we show that the functor $\mathscr{G}$ is left adjoint to the functor $\mathscr{F}$. More explicitly, we have the following result.

\begin{prop}  For any matching dendriform algebra $(D,\{\prec_x,\succ_x\}_{x\in X})$ and a matching relative Rota-Baxter algebra $(A,M,\{P_x\}_{x\in X})$, we have
\begin{align*}
\mathrm{Hom}_{\mathbf{mrRBA}_X} &\big(( D\otimes {\mathbb{K}[X]},D,\{ \mathrm{id}_x\}_{x\in X}), (A,M,\{P_x\}_{x\in X})\big) \\  &\cong \mathrm{Hom}_{\mathbf{mDend}_X} \big((D,~\{\prec_x,\succ_x\}_{x\in X}),(M,\{\prec_x,\succ_x\}_{x\in X})\big).
\end{align*}
\end{prop}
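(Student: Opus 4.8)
The plan is to establish the natural bijection by the standard free-object argument: given a matching relative Rota-Baxter algebra $(A,M,\{P_x\}_{x\in X})$, I will show that a morphism $(\varphi,\psi)$ out of $(D\otimes\mathbb{K}[X],D,\{\mathrm{id}_x\}_{x\in X})$ is completely determined by its restriction $\psi|_D : D\to M$, and that this restriction is automatically a morphism of matching dendriform algebras; conversely, any matching dendriform morphism $f : (D,\{\prec_x,\succ_x\})\to(M,\{\prec_x,\succ_x\})$ extends uniquely to such a pair. First I would define, from a given $f : D\to M$, the linear map $\varphi : D\otimes\mathbb{K}[X]\to A$ by $\varphi(a\otimes x):=P_x(f(a))$ (this is forced, since in the source matching relative Rota-Baxter algebra we have $a\otimes x=\mathrm{id}_x(a)$ and any morphism must satisfy $\varphi\circ\mathrm{id}_x=P_x\circ\psi$), and set $\psi:=f$.

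Next I would verify that $(\varphi,\psi)$ is indeed a morphism in $\mathbf{mrRBA}_X$. There are four things to check. (a) $\varphi$ is an algebra homomorphism: using $\varphi\big((a\otimes x)\bullet(b\otimes y)\big)=\varphi\big((a\prec_y b)\otimes x+(a\succ_x b)\otimes y\big)=P_x(f(a\prec_y b))+P_y(f(a\succ_x b))$, then applying that $f$ is a matching dendriform morphism to rewrite $f(a\prec_y b)=f(a)\prec_y f(b)=f(a)\cdot_M P_y(f(b))$ and $f(a\succ_x b)=P_x(f(a))\cdot_M f(b)$, and finally invoking the matching relative Rota-Baxter identity (\ref{matching-rel-rb-identity}) to recognize $P_x(f(a))\cdot P_y(f(b))=\varphi(a\otimes x)\cdot\varphi(b\otimes y)$. (b) and (c): $\psi$ intertwines the bimodule actions, i.e. $\psi\big((a\otimes x)\cdot_D b\big)=\varphi(a\otimes x)\cdot_M\psi(b)$ and the right-sided analogue; unwinding the definitions these read $f(a\succ_x b)=P_x(f(a))\cdot_M f(b)$ and $f(b\prec_x a)=f(b)\cdot_M P_x(f(a))$, which are exactly the dendriform-morphism conditions on $f$ composed with the defining formulas for $\succ_x,\prec_x$ on $M$. (d) $\varphi\circ\mathrm{id}_x=P_x\circ\psi$ holds by construction.

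Then I would run the argument in the other direction: given a morphism $(\varphi,\psi)$ in $\mathbf{mrRBA}_X$ out of $(D\otimes\mathbb{K}[X],D,\{\mathrm{id}_x\}_{x\in X})$, set $f:=\psi : D\to M$. The bimodule-compatibility conditions on $(\varphi,\psi)$, specialized along $\mathrm{id}_x$, give $\psi(a\succ_x b)=\psi((a\otimes x)\cdot_D b)=\varphi(a\otimes x)\cdot_M\psi(b)=P_x(\psi(a))\cdot_M\psi(b)=\psi(a)\succ_x\psi(b)$ and similarly $\psi(a\prec_x b)=\psi(a)\prec_x\psi(b)$, so $f$ is a matching dendriform morphism; and $\varphi$ is recovered from $f$ via $\varphi(a\otimes x)=\varphi(\mathrm{id}_x(a))=P_x(\psi(a))=P_x(f(a))$, since $\mathbb{K}[X]$ has the $\{x\}_{x\in X}$ as a basis so $\{a\otimes x\}$ spans $D\otimes\mathbb{K}[X]$ and $\varphi$ is linear. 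This shows the two assignments $f\mapsto(\varphi,\psi)$ and $(\varphi,\psi)\mapsto f$ are mutually inverse, giving the claimed bijection; naturality in both arguments is then routine to check on the defining formulas.

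The main obstacle, though it is more bookkeeping than conceptual, is making sure the correspondence between the \emph{three} defining identities of a matching dendriform algebra and the \emph{single} matching relative Rota-Baxter identity is deployed in exactly the right place — in particular, that the compatibility of $\varphi$ with multiplication genuinely uses (\ref{matching-rel-rb-identity}) and not just the dendriform axioms, and that the asymmetry in the definitions $(a\otimes x)\cdot_D b=a\succ_x b$ versus $b\cdot_D(a\otimes x)=b\prec_x a$ (note the index is $x$ on both sides, consistent with $\succ_x,\prec_x$ on $M$) is tracked correctly so that the bimodule-compatibility conditions match the dendriform-morphism conditions with no stray index mismatch. Everything else is a direct unwinding of the definitions introduced just above the statement.
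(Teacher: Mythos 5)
Your proposal is correct and follows essentially the same route as the paper: define $\varphi(a\otimes x)=P_x(f(a))$, check the four morphism conditions (with the algebra-homomorphism property resting on the matching relative Rota-Baxter identity and the other three on the dendriform-morphism property of $f$), and invert by restricting $\psi$ to $D$. You spell out the verifications and the forced-uniqueness of $\varphi$ more explicitly than the paper does, but the argument is the same.
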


\begin{proof} Let $\psi \in \mathrm{Hom}_{\mathbf{mDend}_X} \big((D,\{\prec_x,\succ_x\}_{x\in X}),(M,\{\prec_x,\succ_x\}_{x\in X})\big)$. Then we define a map
\begin{align*}
P^{\psi}:D\otimes {\mathbb{K}[X]}\rightarrow A ~\text{ by }~ P^{\psi}(a\otimes{x})=P_x(\psi(a)), \text{ for } a\otimes{x} \in D\otimes {\mathbb{K}[X]}.
\end{align*}
It is easy to see that $P^{\psi}$ is a morphism of associative algebras that satisfies
\begin{align*}
\psi((a\otimes x)\cdot_Db) = P^{\psi}(a\otimes x)\cdot_M\psi(b), \quad
\psi(b\cdot_D(a\otimes x)) = \psi(b)\cdot_MP^{\psi}(a\otimes x)
~~\mathrm{ and }~~ P^{\psi}\circ \mathrm{id}_x = P_x\circ \psi,
\end{align*}
for $a\otimes x\in D\otimes {\mathbb{K}[X]},~b\in D$ and $x\in X$. This shows that
\begin{align*}
(P^{\psi},\psi)\in \mathrm{Hom}_{\mathbf{mrRBA}_X}\big(( D\otimes {\mathbb{K}[X]},D,\{ \mathrm{id}_x\}_{x\in X}), (A,M,\{P_x\}_{x\in X})\big).
\end{align*}
Conversely, if we have a morphism $(\varphi,\psi)\in \mathrm{Hom}_{\mathbf{mrRBA}_X}\big(( D\otimes {\mathbb{K}[X]},D,\{ \mathrm{id}_x\}_{x\in X}), (A,M,\{P_x\}_{x\in X})\big)$, then we have seen that $\psi \in \mathrm{Hom}_{\mathbf{mDend}_X} \big((D,\{\prec_x,\succ_x\}_{x\in X}),(M,\{\prec_x,\succ_x\}_{x\in X})\big).$ This proves the desired result.
\end{proof}

\begin{remark}\label{embedding}
Let $(D, \{ \prec_x, \succ_x \}_{x \in X})$ be a matching dendriform algebra. Then we have seen that the triple $(D \otimes \mathbb{K}[X] , D, \{ \mathrm{id}_x \}_{x \in X} )$ is a matching relative Rota-Baxter algebra. As a consequence, the pair $\big( (D \otimes \mathbb{K}[X]) \oplus D, \{ P_x \}_{x \in X} \big)$ is a matching Rota-Baxter algebra, where the associative multiplication on $ (D \otimes \mathbb{K}[X]) \oplus D$ is given by the semi-direct product
\begin{align*}
    (a \otimes x, a')  \ltimes (b \otimes y, b') = \big(  (a \prec_y b) \otimes x + (a \succ_x b) \otimes y,~ a \succ_x b' + a' \prec_y b  \big),
\end{align*}
and $P_x : (D \otimes \mathbb{K}[X]) \oplus D \rightarrow (D \otimes \mathbb{K}[X]) \oplus D$ is given by $P_x (b \otimes y , b') = (b' \otimes x, 0)$, for $(b \otimes y, b') \in (D \otimes \mathbb{K}[X]) \oplus D$ and $x \in X$. Therefore, the space $(D \otimes \mathbb{K}[X]) \oplus D$ induces a matching dendriform algebra structure. With this notation, the inclusion map $D \hookrightarrow (D \otimes \mathbb{K}[X]) \oplus D$, $a \mapsto (0, a)$ is a morphism of matching dendriform algebras. Hence it is an embedding of the given matching dendriform algebra $(D, \{ \prec_x, \succ_x \}_{x \in X})$ into the matching Rota-Baxter algebra  $\big( (D \otimes \mathbb{K}[X]) \oplus D, \{ P_x \}_{x \in X} \big)$.
\end{remark}

\medskip

\section{Cohomology of matching relative Rota-Baxter algebras}\label{sec4}
In this section, we introduce the cohomology of a matching relative Rota-Baxter algebra $(A,M,\{P_x\}_{x\in X})$. This cohomology is obtained as a byproduct of the Hochschild cohomology of $A$ and certain cohomology induced by the family of operators $\{P_x\}_{x\in X}$. As a particular case, we define the cohomology of a matching Rota-Baxter algebra $(A,\{P_x\}_{x\in X})$.

Let $A$ be an associative algebra and $M$ be an $A$-bimodule. For any $n\geq 1$, let $\mathfrak{g}^n$ be the set of all elements of the form $P :=\displaystyle \oplus_{{x_1,\ldots,x_n}\in X} P_{x_1,\ldots,x_n}$ with ${P_{x_1,\ldots,x_n}}\in \mathrm{Hom}(M^{\otimes n},A)$. Thus, an element $P\in \mathfrak{g}^n$ is given by a collection $\{P_{x_1,\ldots,x_n} : M^{\otimes n}\rightarrow A\}_{{x_1,\ldots,x_n}\in X}$ of maps labelled by the elements of $X^{\times n}$. Note that $\mathfrak{g}^n$ is a $\mathbb{K}$-module with the abelian group structure and the $\mathbb{K}$-action are respectively given by
\begin{align*}
P+P':= \displaystyle \oplus_{{x_1,\ldots,x_n}\in X} (P_{x_1,\ldots,x_n} + P'_{x_1,\ldots,x_n}) ~~\mathrm{~ and ~}~~ \lambda P := \displaystyle \oplus_{{x_1,\ldots,x_n}\in X} \lambda P_{x_1,\ldots,x_n},
\end{align*}
for $P,P'\in \mathfrak{g}^n$ and $\lambda \in \mathbb{K}$. We define $\mathfrak{g}^0 = A$. With the above notations, we have the following result.

\begin{thm} Let $A$ be an associative algebra and $M$ be an $A$-bimodule. Then the graded $\mathbb{K}$-module $\displaystyle \oplus_{n\geq 0}\mathfrak{g}^n$ inherits a graded Lie algebra structure with the bracket given by

\begin{align}\label{derived-bracket}
&{\llbracket P,Q\rrbracket }_{x_1,\ldots,x_{m+n}}(u_1,\ldots,u_{m+n})\\
&=~\displaystyle\sum_{i=1}^{m} (-1)^{(i-1)n} {P_{x_1,\ldots,x_{i-1},x_{i+n},\ldots,x_{m+n}}}\big(u_1,\ldots,u_{i-1},Q_{x_i,\ldots,x_{i+n-1}}(u_i,\ldots,u_{i+n-1}) \cdot_M u_{i+n},\ldots,u_{m+n}\big) \nonumber \\
&-\displaystyle\sum_{i=1}^{m}(-1)^{in} {P_{x_1,\ldots,x_{i},x_{i+n+1},\ldots,x_{m+n}}} \big(u_1,\ldots,u_{i-1},u_i\cdot_M Q_{x_{i+1},\ldots,x_{i+n}} (u_{i+1},\ldots,u_{i+n}),u_{i+n+1},\ldots,u_{m+n}\big)  \nonumber \\
&-(-1)^{mn}\bigg\{\displaystyle\sum_{i=1}^{n} (-1)^{(i-1)m}  {Q_{x_1,\ldots,x_{i-1},x_{i+m},\ldots,x_{m+n}}} \big(u_1,\ldots,u_{i-1},P_{x_i,\ldots,x_{i+m-1}} (u_i,\ldots,u_{i+m-1})\cdot_Mu_{i+m},\ldots \big)  \nonumber  \\
&-\displaystyle\sum_{i=1}^{n}(-1)^{im} {Q_{x_1,\ldots,x_{i},x_{i+m+1},\ldots,x_{m+n}}}\big(u_1,\ldots,u_{i-1},u_i \cdot_M P_{x_{i+1},\ldots,x_{i+m}}(u_{i+1},\ldots,u_{i+m}),u_{i+m+1},\ldots,u_{m+n}\big)\bigg\}  \nonumber \\
&+(-1)^{mn}\big[ P_{x_1,\ldots,x_m}(u_1,\ldots,u_m)\cdot Q_{x_{m+1},\ldots,x_{m+n}}(u_{m+1},\ldots,u_{m+n})  \nonumber \\
&-~(-1)^{mn}Q_{x_1,\ldots,x_n}(u_1,\ldots,u_n)\cdot P_{x_{n+1},\ldots,x_{m+n}}(u_{n+1},\ldots,u_{m+n})\big],  \nonumber
\end{align}
\begin{align}
\llbracket P,a\rrbracket_{x_1,\ldots,x_m}(u_1,\ldots,u_m) =& \displaystyle\sum_{i=1}^{m} P_{x_1,\ldots,x_m}(u_1,\ldots,u_{i-1},a\cdot_Mu_i-u_i\cdot_Ma,u_{i+1},\ldots,u_m)  \\ \medskip
&+  P_{x_1,\ldots,x_m}(u_1,\ldots,u_m)\cdot a - a\cdot P_{x_1,\ldots,x_m}(u_1,\ldots,u_m),  \nonumber \\
\llbracket a,b\rrbracket =~& a\cdot b - b\cdot a,
\end{align}
for $P =\displaystyle \oplus_{{x_1,\ldots,x_m}\in X} P_{x_1,\ldots,x_m} \in \mathfrak{g}^m$,  $Q=\displaystyle \oplus_{{x_1,\ldots,x_n}\in X} P_{x_1,\ldots,x_n}\in \mathfrak{g}^n$, $a,b\in A$ and $u_1,\ldots,u_{m+n}\in M$.
\end{thm}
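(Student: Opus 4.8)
The plan is to exhibit the graded Lie bracket on $\oplus_{n\geq 0}\mathfrak{g}^n$ as a \emph{derived bracket} coming from a larger, more transparent graded Lie algebra, rather than verifying graded antisymmetry and the graded Jacobi identity for the explicit formula (\ref{derived-bracket}) by brute force. Concretely, I would first form the $\mathbb{K}$-module $N := A \oplus (M\otimes \mathbb{K}[X])$ and consider the graded $\mathbb{K}$-module $C^\bullet$ with $C^n = \mathrm{Hom}(N^{\otimes n}, N)$ for $n\geq 1$ and $C^0 = N$. This carries the classical Gerstenhaber bracket $[\,\cdot\,,\cdot\,]_G$, which makes $\oplus_n C^{n}$ (with appropriate degree shift, $\deg f = n-1$ for $f\in C^n$) a graded Lie algebra. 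The associative multiplication $\mu$ on $A$, the $A$-bimodule structure on $M$, together with the combinatorics of the labels in $X$, assemble into a distinguished element $\pi \in C^2$; the first observation is that $[\pi,\pi]_G = 0$ iff $\mu$ is associative and $(M,l_M,r_M)$ is an $A$-bimodule. One then checks that $\mathfrak{g}^n$ sits inside $C^n$ (or a suitable sub-quotient respecting the $X$-decorations and the bidegree "$(n,1)$ in the $M$-variables, linear output in $A$"), and that the formula (\ref{derived-bracket}) is precisely $\llbracket P, Q\rrbracket = \pm [[\pi, P]_G, Q]_G$ restricted to this subspace. The identities in the statement for $\llbracket P, a\rrbracket$ and $\llbracket a, b\rrbracket$ with $a,b\in A = \mathfrak{g}^0$ are the degree-$0$ specializations of the same derived bracket, and should fall out of the same computation.

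The key steps, in order, are: (1) set up $N$, the Gerstenhaber algebra $(C^\bullet, [\,\cdot\,,\cdot\,]_G)$, and the degree convention; (2) identify $\pi$ and verify $[\pi,\pi]_G = 0$ using associativity of $A$ and the bimodule axioms for $M$ — this is short; (3) describe the subspace of $C^\bullet$ corresponding to $\oplus_n \mathfrak{g}^n$ and check it is preserved by $P \mapsto [[\pi,P]_G, -]_G$, i.e. that the decorated/bigraded structure is respected (here the bookkeeping of the $X$-labels $x_1,\dots,x_{m+n}$ and the way they get split and recombined in (\ref{derived-bracket}) matters); (4) expand $[[\pi,P]_G,Q]_G$ on that subspace and match it term-by-term with (\ref{derived-bracket}), including the signs $(-1)^{(i-1)n}$, $(-1)^{in}$, $(-1)^{mn}$; (5) invoke the general fact (Kosmann-Schwarzbach) that if $d = [\pi,-]_G$ squares to zero then $\llbracket P,Q\rrbracket := \pm[d P, Q]_G$ is a graded Lie bracket, which then transfers to $\mathfrak{g}^\bullet$ by step (3). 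I would cite the derived-bracket machinery rather than reprove it.

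The main obstacle I anticipate is step (3)–(4): making precise the correspondence between "a collection $\{P_{x_1,\dots,x_n}\}$ indexed by $X^{\times n}$" and an honest multilinear map on $N = A\oplus(M\otimes\mathbb{K}[X])$, in such a way that the Gerstenhaber $\circ_i$-insertions reproduce exactly the pattern of decorations appearing in (\ref{derived-bracket}) — in particular why, in the first sum, the labels $x_i,\dots,x_{i+n-1}$ get "consumed" by $Q$ and replaced by the single contraction while the surviving labels reindex as $x_1,\dots,x_{i-1},x_{i+n},\dots,x_{m+n}$, and why the $\cdot_M$ appears adjacent to $u_{i+n}$ rather than $u_{i-1}$. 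This is a genuine combinatorial identification that must be done carefully once; after that, the sign bookkeeping is the standard Gerstenhaber bookkeeping. An alternative, if the derived-bracket identification proves too delicate to state cleanly, is the direct route: define $\llbracket\cdot,\cdot\rrbracket$ by (\ref{derived-bracket}), check graded antisymmetry $\llbracket P,Q\rrbracket = -(-1)^{mn}\llbracket Q,P\rrbracket$ directly (immediate from the symmetry of the formula), and then verify the graded Jacobi identity by the usual tripartite cancellation argument, grouping the terms of $\llbracket\llbracket P,Q\rrbracket, R\rrbracket$ by which two of $P,Q,R$ are "nested" — but I expect this to be substantially more tedious than the derived-bracket argument, so I would only fall back on it if needed.
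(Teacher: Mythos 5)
Your proposal is correct in outline but takes a genuinely different route from the paper. The paper's proof is much lighter on machinery: it directly defines a bilinear operation $\diamond : \mathfrak{g}^m \otimes \mathfrak{g}^n \to \mathfrak{g}^{m+n}$ consisting of your two families of insertion terms together with the term $(-1)^{mn}\, P_{x_1,\ldots,x_m}(u_1,\ldots,u_m)\cdot Q_{x_{m+1},\ldots,x_{m+n}}(u_{m+1},\ldots,u_{m+n})$, asserts that $\diamond$ makes $\oplus_{n\geq 0}\mathfrak{g}^n$ into a graded pre-Lie algebra, and concludes that the graded commutator $\llbracket P,Q\rrbracket = P\diamond Q - (-1)^{mn}\, Q\diamond P$ is a graded Lie bracket; the laborious verification (graded right-symmetry of $\diamond$) is left to the reader, just as your term-by-term matching in step (4) is deferred. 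Your derived-bracket route, with $N = A\oplus(M\otimes\mathbb{K}[X])$ and $\pi = \mu + \tilde{l} + \tilde{r}$ where $\tilde{l}(a, u\otimes x) = (a\cdot_M u)\otimes x$ and $\tilde{r}(u\otimes x, a) = (u\cdot_M a)\otimes x$, does work, and the label bookkeeping you single out as the main obstacle comes out exactly right: the output of $\tilde{l}\big(Q(\xi_i,\ldots,\xi_{i+n-1}),\xi_{i+n}\big)$ carries the decoration $x_{i+n}$, which explains simultaneously why $\cdot_M$ sits next to $u_{i+n}$ and why the surviving labels of $P$ are $x_1,\ldots,x_{i-1},x_{i+n},\ldots,x_{m+n}$ (and dually for $\tilde{r}$). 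What your approach buys is a conceptual derivation of the formula (consistent with the authors' own equation label and their citations of Voronov and Fr\'egier--Zambon) and a uniform treatment of the degree-zero part $\mathfrak{g}^0=A$, which the paper's displayed $\diamond$ formula does not really cover; what the paper's approach buys is brevity and a stronger structural claim (pre-Lie, not just Lie). One point you must make explicit rather than absorb into step (3): the derived bracket of a square-zero element on a graded Lie algebra is in general only a graded Leibniz bracket, and you recover graded antisymmetry precisely because $\oplus_m \mathrm{Hom}\big((M\otimes\mathbb{K}[X])^{\otimes m}, A\big)$ is an \emph{abelian} subalgebra for the Gerstenhaber bracket (an $A$-valued cochain inserted into a slot of another such cochain gives zero, since these cochains vanish on inputs from $A$) that is moreover closed under the derived bracket. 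With that hypothesis stated and checked, the Kosmann-Schwarzbach/Voronov machinery applies and your argument is complete.
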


\begin{proof}
For any $m,n \geq 0$, we define a linear map $\diamond : \mathfrak{g}^m \otimes \mathfrak{g}^n \rightarrow \mathfrak{g}^{m+n}$ by
\begin{align*}
   & (P \diamond Q)_{x_1, \ldots, x_{m+n}} (u_1, \ldots, u_{m+n} ) \\
     &=~\displaystyle\sum_{i=1}^{m} (-1)^{(i-1)n} {P_{x_1,\ldots,x_{i-1},x_{i+n},\ldots,x_{m+n}}}\big(u_1,\ldots,u_{i-1},Q_{x_i,\ldots,x_{i+n-1}}(u_i,\ldots,u_{i+n-1}) \cdot_M u_{i+n},\ldots,u_{m+n}\big) \nonumber \\
&~-\displaystyle\sum_{i=1}^{m}(-1)^{in} {P_{x_1,\ldots,x_{i},x_{i+n+1},\ldots,x_{m+n}}} \big(u_1,\ldots,u_{i-1},u_i\cdot_M Q_{x_{i+1},\ldots,x_{i+n}} (u_{i+1},\ldots,u_{i+n}),u_{i+n+1},\ldots,u_{m+n}\big)  \nonumber \\
&~+ (-1)^{mn} P_{x_1,\ldots,x_m}(u_1,\ldots,u_m)\cdot Q_{x_{m+1},\ldots,x_{m+n}}(u_{m+1},\ldots,u_{m+n}),
\end{align*}
for $P = \displaystyle \oplus_{{x_1,\ldots,x_m}\in X} P_{x_1,\ldots,x_m} \in \mathfrak{g}^m$ and $Q=\displaystyle \oplus_{{x_1,\ldots,x_n}\in X} P_{x_1,\ldots,x_n}\in \mathfrak{g}^n$. It is easy to verify that the operation $\diamond$ makes the graded space $\oplus_{n \geq 0} \mathfrak{g}^n$ into a graded pre-Lie algebra. As a consequence, the operation
\begin{align*}
    \llbracket P, Q \rrbracket = P \diamond Q - (-1)^{mn} Q \diamond P, \text{ for } P \in \mathfrak{g}^m, Q \in \mathfrak{g}^n
\end{align*}
makes the pair $(\oplus_{n \geq 0} \mathfrak{g}^n, \llbracket ~, ~\rrbracket )$ into a graded Lie algebra.
\end{proof}

\begin{thm}\label{thm-mc-char}  Let $A$ be an associative algebra, $M$ be an $A$-bimodule and $\{P_x:M\rightarrow A\}_{x\in x}$ be a collection of linear maps. Then the triple $(A,M,\{P_x\}_{x\in X})$ is a matching relative Rota-Baxter algebra if and only if the element $P = \displaystyle \oplus_{x\in X}P_x\in \mathfrak{g}^1$ is a Maurer-Cartan element in the graded Lie algebra $(\displaystyle \oplus_{n\geq 0}\mathfrak{g}^n,\llbracket~,~\rrbracket)$.
\end{thm}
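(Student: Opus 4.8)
The plan is to identify the Maurer-Cartan equation with the defining identity (\ref{matching-rel-rb-identity}) by a direct computation of the bracket. Recall that a degree-one element $P = \oplus_{x\in X} P_x \in \mathfrak{g}^1$ of the graded Lie algebra $(\oplus_{n\geq 0}\mathfrak{g}^n, \llbracket~,~\rrbracket)$ is a Maurer-Cartan element precisely when $\llbracket P, P\rrbracket = 0$ in $\mathfrak{g}^2$; since $\mathrm{char}\,\mathbb{K}\neq 2$ this is equivalent to $\tfrac12\llbracket P,P\rrbracket = 0$. Thus the statement reduces to computing $\llbracket P,P\rrbracket$ explicitly and recognizing exactly when it vanishes.

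First I would specialize the bracket formula (\ref{derived-bracket}) to the case $m = n = 1$; equivalently, one may use the pre-Lie product $\diamond$ from the proof of the preceding theorem together with the identity $\llbracket P,P\rrbracket = P\diamond P - (-1)^{1\cdot 1}P\diamond P = 2\,P\diamond P$. An element of $\mathfrak{g}^2$ is a collection indexed by $X^{\times 2}$, so I evaluate the $(x,y)$-component of $\llbracket P,P\rrbracket$ on a pair $(u,v)\in M^{\otimes 2}$. With $m=n=1$, each of the two summations in (\ref{derived-bracket}) consists of a single term (the index $i$ runs only over $\{1\}$), and the last two lines collapse to $(-1)^{1}\,P_x(u)\cdot P_y(v)$; collecting the terms together with the signs $(-1)^{(i-1)n}$, $(-1)^{in}$, $(-1)^{mn}$ specialized at $m=n=1$ gives
\[
\tfrac12\,\llbracket P,P\rrbracket_{x,y}(u,v) = P_y\big(P_x(u)\cdot_M v\big) + P_x\big(u\cdot_M P_y(v)\big) - P_x(u)\cdot P_y(v),
\]
for all $x,y\in X$ and $u,v\in M$.

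Finally I would conclude: the right-hand side above vanishes for every $x,y\in X$ and every $u,v\in M$ if and only if
\[
P_x(u)\cdot P_y(v) = P_x\big(u\cdot_M P_y(v)\big) + P_y\big(P_x(u)\cdot_M v\big) \quad \text{for all } x,y\in X,\ u,v\in M,
\]
which is precisely identity (\ref{matching-rel-rb-identity}), i.e. the condition that $(A,M,\{P_x\}_{x\in X})$ be a matching relative Rota-Baxter algebra. Hence $P=\oplus_{x\in X}P_x$ is a Maurer-Cartan element in $(\oplus_{n\geq 0}\mathfrak{g}^n,\llbracket~,~\rrbracket)$ if and only if $(A,M,\{P_x\}_{x\in X})$ is a matching relative Rota-Baxter algebra. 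The argument is essentially routine; the only points requiring attention are the sign-bookkeeping in the specialization $m=n=1$ and the observation that the Maurer-Cartan equation in $\mathfrak{g}^2$ must be read componentwise over all pairs $(x,y)\in X^{\times 2}$ — which is exactly what reproduces the ``for all $x,y\in X$'' quantifier in (\ref{matching-rel-rb-identity}). I do not anticipate any substantive obstacle.
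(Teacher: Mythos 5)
Your proposal is correct and follows essentially the same route as the paper: both specialize the bracket $\llbracket~,~\rrbracket$ at $m=n=1$ to obtain $\llbracket P,P\rrbracket_{x,y}(u,v) = 2\{P_y(P_x(u)\cdot_M v) + P_x(u\cdot_M P_y(v)) - P_x(u)\cdot P_y(v)\}$ and then read off the equivalence with identity (\ref{matching-rel-rb-identity}). Your explicit remarks on the factor of $2$ (using $\mathrm{char}\,\mathbb{K}\neq 2$) and on reading the Maurer--Cartan equation componentwise over $X^{\times 2}$ are points the paper leaves implicit, but the argument is the same.
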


\begin{proof} For any $u,v\in M~and~x,y\in X$, it follows from (\ref{derived-bracket}) that
\begin{align*}
\llbracket P,P\rrbracket_{x,y}(u,v) = 2 \big\{P_y(P_x(u)\cdot_Mv) +  P_x(u\cdot_MP_y(v)) - P_x(u)\cdot P_y(v) \big\}.
\end{align*}
This shows that $\llbracket P,P\rrbracket = 0$ if and only if the collection $\{P_x\}_{x\in X}$ satisfies (\ref{matching-rel-rb-identity}). In other words, $P = \displaystyle \oplus_{x\in X} P_x$ is a Maurer-Cartan element in the graded Lie algebra $(\displaystyle \oplus_{n\geq 0}\mathfrak{g}^n,\llbracket~,~\rrbracket)$ if and only if $(A,M,\{P_x\}_{x\in X})$ is a matching relative Rota-Baxter algebra.
\end{proof}

Let $(A,M,\{P_x\}_{x\in X})$ be a matching relative Rota-Baxter algebra. Then it follows from Theorem \ref{thm-mc-char} that $P = \displaystyle \oplus_{x\in X} P_x$ is a Maurer-Cartan element in the graded Lie algebra $(\displaystyle \oplus_{n\geq 0}\mathfrak{g}^n,\llbracket~,~\rrbracket)$. Hence the element $P$ induces a differential $d_P = \llbracket P,~\rrbracket:\mathfrak{g}^n\rightarrow \mathfrak{g}^{n+1}$, for $n\geq 0$. Moreover, the differential $d_P$ makes the triple $(\displaystyle \oplus_{n\geq 0}\mathfrak{g}^n,\llbracket~,~\rrbracket,d_P)$  into a differential graded Lie algebra. The importance of this differential graded Lie algebra is given by the following result.

\begin{prop} Let $(A,M,\{P_x\}_{x\in X})$ be a matching relative Rota-Baxter algebra. Then for any collection $\{P'_x:M\rightarrow A\}_{x\in X}$  of linear maps, the triple $(A,M,\{P_x + P'_x\}_{x\in X})$ is also a matching relative Rota-Baxter algebra if and only if the element $P'=\displaystyle \oplus_{x\in X}P'_x\in \mathfrak{g}^1$ is a Maurer-Cartan element in the differential graded Lie algebra $\big(\displaystyle \oplus_{n\geq 0}\mathfrak{g}^n,\llbracket~,~\rrbracket,d_P\big)$.
\end{prop}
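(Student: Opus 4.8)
The plan is to reduce the statement to the standard fact that twisting a differential graded Lie algebra by a Maurer--Cartan element translates Maurer--Cartan elements, so no new computation beyond what is already in Theorem \ref{thm-mc-char} is needed. First I would invoke Theorem \ref{thm-mc-char} applied to the collection $\{P_x+P'_x\}_{x\in X}$: the triple $(A,M,\{P_x+P'_x\}_{x\in X})$ is a matching relative Rota-Baxter algebra if and only if the element $P+P' := \oplus_{x\in X}(P_x+P'_x)\in\mathfrak{g}^1$ satisfies the Maurer--Cartan equation $\llbracket P+P',\, P+P'\rrbracket = 0$. Thus the whole assertion amounts to unwinding this single equation.

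Next I would expand $\llbracket P+P',\, P+P'\rrbracket$ by bilinearity of the bracket. Since $P$ and $P'$ both lie in $\mathfrak{g}^1$, the graded antisymmetry coming from the definition $\llbracket P,Q\rrbracket = P\diamond Q - (-1)^{mn}Q\diamond P$ gives $\llbracket P,P'\rrbracket = \llbracket P',P\rrbracket$ (the sign $(-1)^{1\cdot 1}$ makes the bracket \emph{symmetric} on degree-one elements), so the two cross terms add rather than cancel and
\begin{align*}
\llbracket P+P',\, P+P'\rrbracket = \llbracket P,P\rrbracket + 2\,\llbracket P,P'\rrbracket + \llbracket P',P'\rrbracket .
\end{align*}
Because $(A,M,\{P_x\}_{x\in X})$ is a matching relative Rota-Baxter algebra, Theorem \ref{thm-mc-char} yields $\llbracket P,P\rrbracket = 0$, while by definition $d_P(P') = \llbracket P,P'\rrbracket$. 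Hence $\llbracket P+P',\, P+P'\rrbracket = 2\, d_P(P') + \llbracket P',P'\rrbracket$. Since $\mathrm{char}\,\mathbb{K}\neq 2$, this vanishes if and only if $d_P(P') + \frac{1}{2}\llbracket P',P'\rrbracket = 0$, which is precisely the Maurer--Cartan equation for $P'$ in the differential graded Lie algebra $\big(\oplus_{n\geq 0}\mathfrak{g}^n,\llbracket~,~\rrbracket,d_P\big)$ — recalling that this is genuinely a DGLA, as noted just before the statement, because $d_P$ is a derivation of $\llbracket~,~\rrbracket$ and $d_P^2 = \frac{1}{2}\llbracket \llbracket P,P\rrbracket,-\rrbracket = 0$. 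Since all the manipulations are equivalences, both implications are proved at once.

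The argument is entirely formal, so I do not expect a genuine obstacle; the only points needing care are the sign bookkeeping on degree-one elements (making sure the two cross terms coincide rather than cancel) and the explicit use of $\mathrm{char}\,\mathbb{K}\neq 2$ to pass between $2\,d_P(P') + \llbracket P',P'\rrbracket = 0$ and the normalized Maurer--Cartan equation. If a fully self-contained verification were preferred, one could instead compute $\llbracket P+P',\, P+P'\rrbracket_{x,y}(u,v)$ directly from (\ref{derived-bracket}) and compare with the identity (\ref{matching-rel-rb-identity}) for $\{P_x+P'_x\}_{x\in X}$, but the abstract twisting argument above is cleaner and reuses Theorem \ref{thm-mc-char} rather than re-deriving it.
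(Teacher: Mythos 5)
Your proposal is correct and follows essentially the same route as the paper: both reduce the claim via Theorem \ref{thm-mc-char} to the Maurer--Cartan equation for $P+P'$, expand the bracket, use $\llbracket P,P\rrbracket=0$ and the symmetry of the bracket on degree-one elements to obtain $d_P(P')+\tfrac{1}{2}\llbracket P',P'\rrbracket=0$. Your version is slightly more explicit about the sign bookkeeping and the use of $\mathrm{char}\,\mathbb{K}\neq 2$, but the argument is the same.
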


\begin{proof} Note that $(A,M,\{P_x + P'_x\}_{x\in X})$ is a matching relative Rota-Baxter algebra if and only if $P + P'\in \mathfrak{g}^1$ is a Maurer-Cartan element in the graded Lie algebra $(\displaystyle \oplus_{n\geq 0}\mathfrak{g}^n ,\llbracket~,~\rrbracket)$. This holds if and only if
\begin{align*}
\llbracket {P+P'},{P+P'}\rrbracket = 0 &\iff  \llbracket P,P'\rrbracket + \llbracket P',P\rrbracket + \llbracket P',P'\rrbracket = 0\\
&\iff d_P(P') + \frac{1}{2} \llbracket P',P'\rrbracket = 0
\end{align*}
which is equivalent to the fact that $P'$ is a Maurer-Cartan element in the differential graded Lie algebra $\big(\displaystyle \oplus_{n\geq 0}\mathfrak{g}^n,\llbracket~,~\rrbracket,d_P\big)$. This completes the proof.
\end{proof}

Let $(A,M,\{P_x\}_{x\in X})$ be a matching relative Rota-Baxter algebra. Consider the Maurer-Cartan element $P = \displaystyle \oplus_{x\in X}P_x$  in the graded Lie algebra $(\displaystyle \oplus_{n\geq 0}\mathfrak{g}^n,\llbracket~,~\rrbracket)$. We have seen that $P$ induces a differential $d_P$ on the graded vector space  $\displaystyle \oplus_{n\geq 0}\mathfrak{g}^n$, hence $P$ induces a cohomology theory. To make it more explicit, for each $n \geq 0$, we define
\begin{align*}
C^n_{\{P_x\}}(M,A)=\mathfrak{g}^n=\big\{f= \displaystyle \oplus_{{x_1,\ldots,x_n}\in X} f_{x_1,\ldots,x_n}~|~ f_{x_1,\ldots,x_n} \in \mathrm{Hom}(M^{\otimes{n}},A) \text{ for all } x_1,\ldots,x_n\in X \big\}
\end{align*}
and a differential $\delta_{\{P_x\}} : C^n_{\{P_x\}}(M,A)\rightarrow C^{n+1}_{\{P_x\}}(M,A)$ by
\begin{align*}
\delta_{\{P_x\}}(f) := (-1)^n \llbracket P,f\rrbracket, \text{ for } f \in C^n_{\{P_x\}}(A,M).
\end{align*}
The differential $\delta_{\{P_x\}}$ is explicitly given by
\begin{align}
&(\delta_{\{P_x\}}(f))_{x_1,\ldots,x_{n+1}}(u_1,\ldots,u_{n+1})\\
&= P_{x_1}(u_1)\cdot f_{x_2,\ldots,x_{n+1}}(u_2,\ldots,u_{n+1}) - P_{x_1}(u_1\cdot_Mf_{x_2,\ldots,x_{n+1}}(u_2,\ldots,u_{n+1}) )\nonumber \\
&+\displaystyle\sum_{i=1}^{n}(-1)^i~ f_{x_1,\ldots,x_i,x_{i+2},\ldots,x_{n+1}}(u_1,\ldots,u_{i-1},u_i\cdot_MP_{x_{i+1}}(u_{i+1}),\ldots,u_{n+1}) \nonumber \\
&+\displaystyle\sum_{i=1}^{n}(-1)^i~ f_{x_1,\ldots,x_{i-1},x_{i+1},\ldots,x_{n+1}} (u_1,\ldots,u_{i-1},P_{x_i}(u_i)\cdot_M u_{i+1},\ldots,u_{n+1}) \nonumber \\
&+ (-1)^{n+1}~f_{x_1,\ldots,x_{n}}(u_1,\ldots,u_{n})\cdot P_{x_{n+1}}(u_{n+1}) - (-1)^{n+1}~P_{x_{n+1}}(f_{x_1,\ldots,x_{n}}(u_1,\ldots,u_{n})\cdot_Mu_{n+1}),  \nonumber
\end{align}
for $f\in C^n_{\{P_x\}}(M,A),~u_1,\ldots,u_{n+1}\in M$ and $x_1,\ldots,x_{n+1}\in X$. We denote the corresponding cohomology groups by $H^{\bullet}_{\{P_x\}}(M,A)$.

\begin{remark}  Note that the cohomology groups  $H^{\bullet}_{\{P_x\}}(M,A)$ are associated with the collection $\{P_x\}_{x\in X}$ of operators. Hence such cohomology groups are only useful to study the operators $\{P_x\}_{x\in X}$, but not the whole matching relative Rota-Baxter algebra $(A,M,\{P_x\}_{x\in X})$.
\end{remark}

In the following, we define another cohomology that captures the full information about a matching relative Rota-Baxter algebra $(A,M,\{P_x\}_{x\in X})$. This cohomology will generalise the cohomology of a relative Rota-Baxter algebra defined in \cite{das-mishra}.

Let $(A,M,\{P_x\}_{x\in X})$ be a matching relative Rota-Baxter algebra. For each $n\geq 0$, we define a $\mathbb{K}$-module $C^n_{\mathrm{mrRBA}}(A,M,\{P_x\}_{x\in X})$ as follows:
\begin{align*}
    C^n_{\mathrm{mrRBA}}(A,M,P)=
\begin{cases}
    0,& \text{if } n= 0\\
    \mathrm{Hom}(A,A)\oplus \mathrm{Hom}(M,M),              & \text{if } n = 1\\
    \mathrm{Hom}(A^{\otimes n},A)\oplus \mathrm{Hom}(\mathcal{A}^{n-1,1},M)\oplus C_{\{P_x\}}^{n-1}(M,A),              & \text{if } n \geq 2.
\end{cases}
\end{align*}
Moreover, we define a map $\delta_{\mathrm{mrRBA}}:C^n_{\mathrm{mrRBA}}(A,M,\{P_x\}_{x\in X})\rightarrow C^{n+1}_{\mathrm{mrRBA}}(A,M,\{P_x\}_{x\in X})$ by
\begin{align*}
\delta_{\mathrm{mrRBA}}(\alpha,\beta) &= \big(\delta_{\mathrm{Hoch}}(\alpha), \delta^\alpha_{\mathrm{Hoch}}(\beta), h_{\{P_x\}}(\alpha,\beta)\big) ,\\
\delta_{\mathrm{mrRBA}}(\alpha,\beta,\gamma) &= \big(\delta_{\mathrm{Hoch}}(\alpha), \delta^\alpha_{\mathrm{Hoch}}(\beta), \delta_{\{P_x\}}(\gamma) + h_{\{P_x\}}(\alpha,\beta)\big),
\end{align*}
for any  $(\alpha,\beta)\in C^1_{\mathrm{mrRBA}}(A,M,\{P_x\})$ and $(\alpha,\beta,\gamma)\in C^{n\geq 2}_{\mathrm{mrRBA}}(A,M,\{P_x\})$. Here the map
\begin{align*}
h_{ \{ P_x \} }:\mathrm{Hom}(A^{\otimes n},A)\oplus \mathrm{Hom}(\mathcal{A}^{n-1,1},M)\rightarrow C^n_{\{P_x\}}(M,A) \text{ is given by }
\end{align*}
\begin{align*}
&\big( h_{\{P_x\}} (\alpha,\beta) \big)_{x_1,\ldots,x_n} (u_1, \ldots, u_n) \\
&= (-1)^n\big\{\alpha\big(P_{x_1}(u_1),\ldots,P_{x_n}(u_n)\big) - \sum_{i=1}^{n}   P_{x_i} \beta \big(P_{x_1}(u_1),\ldots,u_i,\ldots,P_{x_n}(u_n) \big) \big\},
\end{align*}
for $u_1,\ldots,u_n \in M$ and $x_1,\ldots,x_n \in M$.

\begin{thm}
 Let $(A,M,\{P_x\}_{x\in X})$ be a matching relative Rota-Baxter algebra. With the above notations, the pair $\big\{ C^\bullet_{\mathrm{mrRBA}}(A,M,\{P_x\}_{x\in X}) , \delta_{\mathrm{mrRBA}} \big\}$ is a cochain complex.
 \end{thm}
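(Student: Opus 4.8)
The plan is to verify $\delta_{\mathrm{mrRBA}}^2 = 0$ by decomposing the claim into the three components of $C^\bullet_{\mathrm{mrRBA}}$ and reducing each to a statement we already have. On the first component, $\delta_{\mathrm{mrRBA}}$ acts by $\delta_{\mathrm{Hoch}}$ on $\mathrm{Hom}(A^{\otimes n}, A)$, so there $\delta_{\mathrm{Hoch}}^2 = 0$ is the classical Hochschild identity. On the middle component, the map is $\beta \mapsto \delta^\alpha_{\mathrm{Hoch}}(\beta)$, whose square (together with the inhomogeneous term coming from $\delta_{\mathrm{Hoch}}(\alpha)$ in the first slot) vanishes by exactly the same computation used in \cite{das-mishra} to show $\delta_{\mathrm{rRBA}}^2 = 0$ for an ordinary relative Rota-Baxter algebra; since the matching operators play no role in the first two components, that argument transfers verbatim. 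So the genuinely new content is confined to the third component $C^{n-1}_{\{P_x\}}(M,A)$.

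For the third component the cocycle condition to check is, schematically,
\begin{align*}
\delta_{\{P_x\}} \circ \delta_{\{P_x\}} = 0
\quad\text{and}\quad
\delta_{\{P_x\}} \circ h_{\{P_x\}} + h_{\{P_x\}} \circ \big(\delta_{\mathrm{Hoch}}(\alpha), \delta^\alpha_{\mathrm{Hoch}}(\beta)\big) = 0.
\end{align*}
The first of these is immediate from the graded Lie algebra structure: by Theorem \ref{thm-mc-char}, $P = \oplus_{x\in X} P_x$ is a Maurer-Cartan element of $(\oplus_n \mathfrak{g}^n, \llbracket\,,\,\rrbracket)$, so $d_P = \llbracket P, -\rrbracket$ satisfies $d_P^2 = \tfrac12\llbracket \llbracket P,P\rrbracket, -\rrbracket = 0$, and $\delta_{\{P_x\}} = \pm d_P$ differs only by signs; hence $\delta_{\{P_x\}}^2 = 0$ with no further work. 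The second identity is the mixed term, and this is where the real computation lives: one must show that applying $h_{\{P_x\}}$ after the Hochschild-type differentials on $(\alpha,\beta)$ is cancelled by applying $\delta_{\{P_x\}}$ after $h_{\{P_x\}}$. The strategy here is to mimic the proof in \cite{das-mishra}: expand both composites using the explicit formulas for $h_{\{P_x\}}$, $\delta_{\{P_x\}}$, $\delta_{\mathrm{Hoch}}$ and $\delta^\alpha_{\mathrm{Hoch}}$, insert $P_{x_i}(u_i)$ into the appropriate slots, and match terms. The matching relative Rota-Baxter identity (\ref{matching-rel-rb-identity}) is exactly what is needed to convert a term $P_{x_i}(u_i)\cdot P_{x_{i+1}}(u_{i+1})$ arising from the leading $\alpha$-term of $h_{\{P_x\}}$ into the two terms $P_{x_i}(u_i \cdot_M P_{x_{i+1}}(u_{i+1})) + P_{x_{i+1}}(P_{x_i}(u_i)\cdot_M u_{i+1})$ produced by the $P_{x_i}\beta(\ldots)$ terms after differentiation — and the signs are arranged (this is the point of the $(-1)^n$ in $h_{\{P_x\}}$) so that the bookkeeping closes.

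The main obstacle is therefore the sign and index bookkeeping in this mixed identity: there are $O(n)$ terms on each side, each carrying a sign depending on its position, and one has to track how the label strings $x_1,\ldots,x_n$ get contracted when two adjacent slots are merged by $\mu$, $l_M$, or $r_M$. A clean way to organise this is to observe that the whole construction of $\delta_{\mathrm{mrRBA}}$ on the matching component is a twisting of $d_P$ by the Hochschild data $(\mu, l_M, r_M, \alpha, \beta)$, i.e.\ it fits into a larger differential graded Lie algebra (of the type controlling simultaneous deformations) in which $(\mu + l_M + r_M) + (\alpha+\beta) + P$ is a Maurer-Cartan element; then $\delta_{\mathrm{mrRBA}}^2 = 0$ follows formally from $\llbracket\,\text{MC}, \text{MC}\,\rrbracket = 0$. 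If that packaging is available it makes the proof conceptual; otherwise one falls back on the direct term-by-term verification, which is routine but lengthy, and I would present only the key cancellation (the one invoking (\ref{matching-rel-rb-identity})) explicitly and relegate the rest to "a straightforward but tedious computation, entirely analogous to \cite{das-mishra}."
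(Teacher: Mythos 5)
Your proposal is correct and follows essentially the same route as the paper: the authors also reduce everything to the mixed identity $\delta_{\{P_x\}}\big(h_{\{P_x\}}(\alpha,\beta)\big)+h_{\{P_x\}}\big(\delta_{\mathrm{Hoch}}(\alpha),\delta^{\alpha}_{\mathrm{Hoch}}(\beta)\big)=0$, verify it by expanding both composites term by term, and invoke the identity (\ref{matching-rel-rb-identity}) to effect exactly the cancellation you describe, with $(\delta_{\{P_x\}})^2=0$ coming for free from the Maurer--Cartan characterization. The alternative "big dgla" packaging you float is not pursued in the paper, which settles for the direct computation you correctly identify as the fallback.
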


\begin{proof}
For $(\alpha, \beta) \in \mathrm{Hom} (A^{\otimes n}, A) \oplus \mathrm{Hom}(\mathcal{A}^{n-1,1}, M)$, we have
\begin{align*}
    &\bigg[  \delta_{ \{ P_x \}} \big( h_{\{ P_x \}} (\alpha, \beta) \big) ~+~  h_{\{ P_x \}} \big(  \delta_\mathrm{Hoch} (\alpha), \delta_\mathrm{Hoch}^\alpha (\beta) \big) \bigg]_{x_1, \ldots, x_{n+1}} (u_1, \ldots, u_{n+1}) \\
    &= P_{x_1} \cdot \big(   h_{\{ P_x \}} (\alpha, \beta) \big)_{x_2, \ldots, x_{n+1}} (u_2, \ldots, u_{n+1}) ~-~ P_{x_1} \big( u_1 \cdot_M (   h_{\{ P_x \}} (\alpha, \beta))_{x_2, \ldots, x_{n+1}} (u_2, \ldots, u_{n+1})   \big) \\
   & + \sum_{i=1}^n (-1)^i \big(   h_{\{ P_x \}} (\alpha, \beta) \big)_{x_1, \ldots, x_i, x_{i+2}, \ldots, x_{n+1}} \big(  u_1, \ldots, u_{i-1}, u_i \cdot_M P_{x_{i+1}} (u_{i+1}), u_{i+2}, \ldots, u_{n+1} \big) \\
   &  + \sum_{i=1}^n (-1)^i \big(   h_{\{ P_x \}} (\alpha, \beta) \big)_{x_1, \ldots, x_{i-1}, x_{i+1}, \ldots, x_{n+1}} \big(  u_1, \ldots, u_{i-1}, P_{x_i} (u_i) \cdot_M u_{i+1}, u_{i+2}, \ldots, u_{n+1} \big) \\
   &  + (-1)^{n+1}  \big\{ \big(   h_{\{ P_x \}} (\alpha, \beta) \big)_{x_1, \ldots, x_n} (u_1, \ldots, u_n) \cdot P_{x_{n+1}} (u_{n+1}) \\
   &  \qquad \qquad \qquad \qquad - P_{x_{n+1}} \big(  ( h_{\{ P_x \}} (\alpha, \beta)  )_{x_1, \ldots, x_n} (u_1, \ldots, u_n) \cdot_M u_{n+1} \big) \big\} \\
   &  + (-1)^{n+1} \bigg\{ \big(  \delta_\mathrm{Hoch} (\alpha) \big) \big( P_{x_1}(u_1), \ldots, P_{x_{n+1}} (u_{n+1})  \big) \\
   & \qquad \qquad \qquad \qquad  - \sum_{r=1}^{n+1} P_{x_r}  \big(  (\delta^\alpha_\mathrm{Hoch} (\beta)) (P_{x_1} (u_1), \ldots, u_r, \ldots, P_{x_{n+1}} (u_{n+1}) )  \big)  \bigg\}\\
    & = (-1)^n \bigg\{ P_{x_1} (u_1) \cdot \alpha \big( P_{x_2} (u_2), \ldots, P_{x_{n+1}} (u_{n+1})  \big) \\
   & \qquad \qquad \qquad \qquad  - \sum_{s=1}^n P_{x_1} (u_1) \cdot P_{x_{s+1}} \beta \big(  P_{x_2}(u_2), \ldots, u_{s+1}, \ldots, P_{x_{n+1}} (u_{n+1})  \big) \bigg\} \\
   &  - (-1)^n \bigg\{  P_{x_1} \big( u_1 \cdot_M \alpha \big( P_{x_2} (u_2), \ldots, P_{{x_{n+1}}} (u_{n+1})  \big)  \big) \\
   & \qquad \qquad \qquad \qquad  - \sum_{s=1}^n P_{x_1} \big(   u_1 \cdot_M P_{x_{s+1}} \beta \big( P_{x_2} (u_2), \ldots, u_{s+1}, \ldots, P_{x_{n+1}} (u_{n+1})  \big) \big)  \bigg\} \\
    & + (-1)^n \sum_{i=1}^n (-1)^i \alpha \big(  P_{x_1} (u_1), \ldots, P_{x_{i-1}} (u_{i-1}), P_{x_i} (u_i \cdot_M P_{x_{i+1}} (u_{i+1})), P_{x_{i+2}} (u_{i+2}), \ldots, P_{x_{n+1}} (u_{n+1})  \big) \\
    & - (-1)^n \sum_{i=1}^n (-1)^i \sum_{r=1, r \neq i+1}^{n+1} P_{x_r} \beta \big(\underbrace{ P_{x_1} (u_1), \ldots, P_{x_i} (u_i \cdot_M P_{x_{i+1}} (u_{i+1}) ), \ldots, P_{x_{n+1}} (u_{n+1}) }_{P_{x_r} \text{ is missing}}  \big) \\
    & + (-1)^n \sum_{i=1}^n (-1)^i \alpha \big(  P_{x_1} (u_1), \ldots, P_{x_{i-1}} (u_{i-1}), P_{x_{i+1}} (u_i \cdot_M P_{x_{i+1}} (u_{i+1})), P_{x_{i+2}} (u_{i+2}), \ldots, P_{x_{n+1}} (u_{n+1})  \big) \\
    & - (-1)^n \sum_{i=1}^n (-1)^i \sum_{r=1, r \neq i}^{n+1} P_{x_r} \beta \big(\underbrace{ P_{x_1} (u_1), \ldots, P_{x_{i+1}} (u_i \cdot_M P_{x_{i+1}} (u_{i+1}) ), \ldots, P_{x_{n+1}} (u_{n+1}) }_{P_{x_r} \text{ is missing}}  \big) \\
    & + (-1)^{n+1} (-1)^n \bigg\{  \alpha \big( P_{x_1} (u_1), \ldots, P_{x_n} (u_n)  \big) - \sum_{s=1}^n P_{x_s} \beta \big( P_{x_1} (u_1), \ldots, u_s, \ldots, P_{x_n} (u_n) \big) \bigg\} \cdot P_{x_{n+1}} (u_{n+1}) \\
    & -(-1)^{n+1} (-1)^n P_{x_{n+1}} \bigg( \big(   \alpha \big( P_{x_1} (u_1), \ldots, P_{x_n} (u_n)  \big) - \sum_{s=1}^n P_{x_s} \beta \big( P_{x_1} (u_1), \ldots, u_s, \ldots, P_{x_n} (u_n) \big) \big) \cdots_M u_{n+1}  \bigg) \\
    & + (-1)^{n+1} \bigg\{ P_{x_1} (u_1) \cdot \alpha \big(  P_{x_2} (u_2) , \ldots, P_{x_{n+1}} (u_{n+1})  \big) + (-1)^{n+1} \alpha \big(  P_{x_1} (u_1), \ldots, P_{x_n} (u_n)  \big) \cdot P_{x_{n+1}} (u_{n+1}) \\
    & \qquad \qquad \qquad \qquad  + \sum_{i=1}^n (-1)^i \alpha \big(  P_{x_1} (u_1), \ldots, P_{x_i} (u_i) \cdot P_{x_{i+1}} (u_{i+1}), \ldots, P_{x_{n+1}} (u_{n+1})   \big) \bigg\} \\
    &- (-1)^{n+1} \sum_{r=1}^{n+1} P_{x_r}  \big(  (\delta^\alpha_\mathrm{Hoch} (\beta)) (P_{x_1} (u_1), \ldots, u_r, \ldots, P_{x_{n+1}} (u_{n+1}) )  \big).
\end{align*}
By using the fact that the collection $\{ P_x \}_{x \in X}$ of linear maps satisfy (\ref{matching-rel-rb-identity}), one gets that the above expression vanishes. Hence we have $ \delta_{ \{ P_x \}} \big( h_{\{ P_x \}} (\alpha, \beta) \big) ~+~  h_{\{ P_x \}} \big(  \delta_\mathrm{Hoch} (\alpha), \delta_\mathrm{Hoch}^\alpha (\beta) \big) = 0$. Therefore,
\begin{align*}
   & \big(  \delta_\mathrm{mrRBA} \big)^2 (\alpha, \beta, \gamma) \\
    &= \delta_\mathrm{mrRBA} \big(  \delta_\mathrm{Hoch} (\alpha),~ \delta_\mathrm{Hoch}^\alpha (\beta),~ \delta_{ \{ P_x \} }(\gamma) + h_{\{ P_x \}} (\alpha, \beta)  \big) \\
    &= \big( (\delta_\mathrm{Hoch})^2 (\alpha),~ (\delta_\mathrm{Hoch}^\alpha)^2 (\beta),~  (\delta_{ \{ P_x \} })^2(\gamma) +  \delta_{ \{ P_x \}} \big( h_{\{ P_x \}} (\alpha, \beta) \big) ~+~  h_{\{ P_x \}} (  \delta_\mathrm{Hoch} (\alpha), \delta_\mathrm{Hoch}^\alpha (\beta) ) \big) \\
    &= 0.
\end{align*}
Hence the result follows.
\end{proof}

Let
\begin{align*}
Z^n_{\mathrm{mrRBA}}(A,M,\{P_x\}_{x\in X}) \subseteq C^n_{\mathrm{mrRBA}}(A,M,\{P_x\}_{x\in X}),\\  B^n_{\mathrm{mrRBA}}(A,M,\{P_x\}_{x\in X}) \subseteq C^n_{\mathrm{mrRBA}}(A,M,\{P_x\}_{x\in X})
\end{align*}
be the space of $n$-cocycles and $n$-coboundaries, respectively. Then we have
\begin{align*}
B^n_{\mathrm{mrRBA}}(A,M,\{P_x\}_{x\in X}) \subseteq Z^{n}_{\mathrm{mrRBA}}(A,M,\{P_x\}_{x\in X}), \text{ for all } n \geq 0.      
\end{align*}
The corresponding quotients
\begin{center}
$H^n_{\mathrm{mrRBA}}(A,M,\{P_x\}_{x\in X}) := \cfrac{Z^n_{\mathrm{mrRBA}}(A,M,\{P_x\}_{x\in X})}{B^n_{\mathrm{mrRBA}}(A,M,\{P_x\}_{x\in X})}$ , for $n\geq 0$
\end{center}
are called the {\bf cohomology groups} of the matching relative Rota-Baxter algebra $(A,M,\{P_x\}_{x\in X})$.

\medskip

\subsection{Cohomology of matching Rota-Baxter algebras}
In this subsection, we define the cohomology of a matching Rota-Baxter algebra $(A,\{P_x\}_{x\in X})$ using the general framework of matching relative Rota-Baxter algebras.

Let $(A,\{P_x\}_{x\in X})$ be a matching Rota-Baxter algebra. For each $n\geq 0$, we define
\begin{align*}
C^n_{\mathrm{mRBA}}(A,\{P_x\}_{x\in X}) :=
\begin{cases}
    0,& \text{if } n= 0\\
    \mathrm{Hom}(A,A),              & \text{if } n = 1\\
    \mathrm{Hom}(A^{\otimes n},A)\oplus C^{n-1}_{\{P_x\}}(A,A),              & \text{if } n \geq 2.
\end{cases}
\end{align*}
Then there is an embedding $E:C^n_{\mathrm{mRBA}}(A,\{P_x\}_{x\in X}) \hookrightarrow C^n_{\mathrm{mrRBA}}(A,A,\{P_x\}_{x\in X})$ given by
\begin{align*}
E(\alpha,\gamma) = (\alpha,\alpha,\gamma), \text{ for } (\alpha,\gamma)\in C^n_{\mathrm{mRBA}}(A,\{P_x\}_{x\in X}).
\end{align*}
It is easy to verify that $\delta_{\mathrm{mrRBA}}(E(\alpha,\gamma))\subset \mathrm{im}(E)$.  This shows that the map $\delta_{\mathrm{mrRBA}}$ induces a map $\delta_{\mathrm{mRBA}}: C^n_{\mathrm{mRBA}}(A,\{P_x\}_{x\in X})\rightarrow C^{n+1}_{\mathrm{mRBA}}(A,\{P_x\}_{x\in X})$. The map $\delta_{\mathrm{mRBA}}$ is a differential as $\delta_{\mathrm{mrRBA}}$ is so.
Hence we obtain a cochain complex $\big\{C^{\bullet}_{\mathrm{mRBA}}(A,\{P_x\}_{x\in X}) , \delta_{\mathrm{mRBA}}\big\}$. The corresponding cohomology groups are called the {\bf cohomology} of the matching Rota-Baxter algebra $(A,\{P_x\}_{x\in X})$.

Let $(A,\{P_x\}_{x\in X})$ be a matching Rota-Baxter algebra.  Let $\big\{C^{\bullet}_{\{P_x\}}(A,A),\delta_{\{P_x\}}\big\}$ be the cochain complex induced by the collection of operators $\{P_x\}_{x\in X}$, and let $H^\bullet_{\{ P_x \}} (M, A)$ be the corresponding cohomology groups. Then there is a short exact sequence of cochain complexes
\begin{align}
0\rightarrow\big\{C^{\bullet - 1}_{\{P_x\}}(A,A) , \delta_{\{P_x\}}\big\} \xrightarrow{i} \big\{C^{\bullet}_{\mathrm{mRBA}}(A,\{P_x\}_{x\in X}),\delta_{\mathrm{mRBA}}\big\} \xrightarrow{p}\big\{C^{\bullet}_\mathrm{Hoch}(A,A),\delta_{\mathrm{Hoch}}\big\}\rightarrow 0,
\end{align}
where $i(f)=(0,f)$ and $p(\alpha,\gamma) = \alpha$, for $f\in  C^{\bullet - 1}_{\{P_x\}}(A,A)$ and $(\alpha,\gamma)\in C^\bullet_{\mathrm{rRBA}}(A,\{P_x\}_{x\in X})$. Thus, we obtain the following.
\begin{thm}
  Let $(A,\{P_x\}_{x\in X})$ be a matching Rota-Baxter algebra. Then there is a long exact sequence of cohomology groups
\begin{align*}
\cdots\rightarrow H^{n-1}_{\{P_x\}}(A,A)\rightarrow H^n_{\mathrm{mRBA}}(A,\{P_x\})\rightarrow H^n_{\mathrm{Hoch}}(A,A)\rightarrow H^{n}_{\{P_x\}}(A,A)\rightarrow\cdots.
\end{align*}
\end{thm}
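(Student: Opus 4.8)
The plan is to obtain the long exact sequence as a purely formal consequence of the short exact sequence of cochain complexes
\[
0\to\big\{C^{\bullet-1}_{\{P_x\}}(A,A),\delta_{\{P_x\}}\big\}\xrightarrow{i}\big\{C^{\bullet}_{\mathrm{mRBA}}(A,\{P_x\}_{x\in X}),\delta_{\mathrm{mRBA}}\big\}\xrightarrow{p}\big\{C^{\bullet}_{\mathrm{Hoch}}(A,A),\delta_{\mathrm{Hoch}}\big\}\to0
\]
exhibited just above, by invoking the standard zig-zag (snake) lemma of homological algebra. So the only real content is to check that the displayed sequence is genuinely a short exact sequence of cochain complexes; once that is in hand, the long exact sequence together with its connecting homomorphism is automatic.

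First I would verify that $i$ and $p$ are morphisms of cochain complexes. For $p$ this is immediate from the description of $\delta_{\mathrm{mRBA}}$ obtained via the embedding $E(\alpha,\gamma)=(\alpha,\alpha,\gamma)$: the first slot of $\delta_{\mathrm{mRBA}}(\alpha,\gamma)$ is $\delta_{\mathrm{Hoch}}(\alpha)$, so $p\circ\delta_{\mathrm{mRBA}}=\delta_{\mathrm{Hoch}}\circ p$. For $i$, since $i(f)=(0,f)$ and $h_{\{P_x\}}(0,0)=0$ one gets $\delta_{\mathrm{mRBA}}(0,f)=(0,\delta_{\{P_x\}}(f))=i(\delta_{\{P_x\}}(f))$, so $i$ commutes with the differentials; no auxiliary sign on $\delta_{\{P_x\}}$ is needed, the degree shift being absorbed into the reindexing $\bullet\mapsto\bullet-1$. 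Exactness at each degree is transparent from the direct-sum description of $C^{\bullet}_{\mathrm{mRBA}}(A,\{P_x\}_{x\in X})$: the map $i$ is injective because $(0,f)=0$ forces $f=0$; the map $p$ is surjective because $\alpha\mapsto(\alpha,0)$ is a set-theoretic section; and $\ker p=\{(\alpha,\gamma)\mid\alpha=0\}=\{(0,\gamma)\}=\mathrm{im}\,i$. The only mild care needed is in the edge degrees $n=0,1$, where the cochain groups either vanish or take the special form recorded in the definition of $C^{\bullet}_{\mathrm{mRBA}}$; the same bookkeeping handles those.

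With the short exact sequence established, the snake lemma produces the connecting homomorphism $\partial\colon H^n_{\mathrm{Hoch}}(A,A)\to H^n_{\{P_x\}}(A,A)$ and hence the asserted long exact sequence (the index matching being that $H^n$ of $C^{\bullet-1}_{\{P_x\}}$ equals $H^{n-1}_{\{P_x\}}$). I would make $\partial$ explicit: given a Hochschild $n$-cocycle $\alpha$, lift it to $(\alpha,0)\in C^n_{\mathrm{mRBA}}(A,\{P_x\}_{x\in X})$, apply $\delta_{\mathrm{mRBA}}$ to obtain $\big(\delta_{\mathrm{Hoch}}(\alpha),\,h_{\{P_x\}}(\alpha,\alpha)\big)=\big(0,\,h_{\{P_x\}}(\alpha,\alpha)\big)\in\mathrm{im}\,i$, and set $\partial[\alpha]:=\big[h_{\{P_x\}}(\alpha,\alpha)\big]$; that $h_{\{P_x\}}(\alpha,\alpha)$ is $\delta_{\{P_x\}}$-closed follows from the identity $\delta_{\{P_x\}}h_{\{P_x\}}(\alpha,\beta)+h_{\{P_x\}}(\delta_{\mathrm{Hoch}}(\alpha),\delta^{\alpha}_{\mathrm{Hoch}}(\beta))=0$ already verified in the proof that $(\delta_{\mathrm{mrRBA}})^2=0$, taken with $\beta=\alpha$ and $\delta_{\mathrm{Hoch}}(\alpha)=0$. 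I do not anticipate any serious obstacle here: the result is a formal corollary of the snake lemma, and the only points deserving a second look are the sign/degree-shift compatibility making $C^{\bullet-1}_{\{P_x\}}(A,A)$ a subcomplex of $C^{\bullet}_{\mathrm{mRBA}}(A,\{P_x\}_{x\in X})$ via $i$, and the low-degree terms of the sequence --- both of which are routine once the differentials are unwound.
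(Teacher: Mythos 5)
Your proposal is correct and follows exactly the paper's route: the paper simply exhibits the same short exact sequence of cochain complexes immediately before the theorem and deduces the long exact sequence as a formal consequence, leaving the verification of the chain-map and exactness properties (and the explicit form of the connecting homomorphism) unstated. Your write-up supplies precisely those routine checks, so it matches the intended argument.
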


\medskip

\section{Deformations of matching relative Rota-Baxter algebras}\label{sec5}
In this section, we study formal deformations of a matching relative Rota-Baxter algebra $(A,M,\{P_x\}_{x\in X})$. In such deformations, we allow to deform the underlying algebra $A$, the $A$-bimodule $M$ and the collection of operators $\{P_x\}_{x\in X}$. We show that such deformations are governed by the cohomology of the matching relative Rota-Baxter algebra $(A,M,\{P_x\}_{x\in X})$ introduced in the previous section. Finally, we consider infinitesimal deformations of a matching relative Rota-Baxter algebra as a truncated version of formal deformations.

Let $(A,M,\{P_x\}_{x\in X})$ be a matching relative Rota-Baxter algebra. We consider the space $A[[t]]$ (resp. $M[[t]]$) of formal power series in $t$ with coefficients in $A$ (resp. $M$). Then $A[[t]]$ and $M[[t]]$ are both $\mathbb{K}[[t]]$-modules.
%Moreover, the $A$-bimodule structure $M$ extends to an $A[[t]]$-bimodule structure on $\mathbb{K}[[t]]$.

\begin{defn} Let $(A,M,\{P_x\}_{x\in X})$ be a matching relative Rota-Baxter algebra. A {\bf formal one-parameter deformation} of $(A,M,\{P_x\}_{x\in X})$ consists of a tuple $(\mu_t,l_t,r_t, \{ (P_{t})_x \}_{x \in X})$ of three formal power series of the form
\begin{align*}
 \mu_t=\displaystyle\sum_{i\geq 0}t^i\mu_i, \quad l_t=\displaystyle\sum_{i\geq 0}t^i l_i, \quad r_t=\displaystyle\sum_{i\geq 0}t^i r_i
\end{align*}
and a collection of power series $\{ (P_{t})_x=\displaystyle\sum_{i\geq 0}t^i P_{i,x} \}_{x \in X}$ labelled by the elements of $X$ 
(where  $\mu_i\in \mathrm{Hom}(A^{\otimes{2}},A),~l_i\in \mathrm{Hom}(A \otimes M,A),~r_i\in \mathrm{Hom}(M \otimes A,A),~ P_{i, x} \in \mathrm{Hom}(M,A)$  for all $i\geq 0$, with $\mu_0=\mu,~l_0=~l,~r_0=r$ and $P_{0,x}=P_x$) such that

\medskip

$\bullet$  the $\mathbb{K}[[t]]$-linear operation $\mu_t$ makes $(A[[t]],\mu_t)$ into an associative algebra over $\mathbb{K}[[t]]$,

\medskip

$\bullet$ the $\mathbb{K}[[t]]$-linear operations $l_t,r_t$ makes $(M[[t]],l_t,r_t)$ into a bimodule over the associative algebra $(A[[t]],\mu_t)$,

\medskip

$\bullet$ the collection $\{(P_t)_x:M[[t]]\rightarrow A[[t]]\}_{x\in X}$ of $\mathbb{K}[[t]]$-linear maps satisfies
\begin{align*}
 \mu_t\big((P_t)_x (u),(P_t)_y (v) \big)=(P_t)_x\big(r_t(u,(P_t)_y v)\big)+(P_t)_y\big(l_t((P_t)_x u,v)\big), \text{ for all } u,v\in M \text{ and  } x,y\in X.
\end{align*}
In other words, the triple $\big(  (A[[t]],\mu_t),(M[[t]],l_t,r_t),\{(P_t)_x\}_{x\in X} \big)$ is a matching relative Rota-Baxter algebra over the ring $\mathbb{K}[[t]]$.
\end{defn}

It follows that the quadruple $(\mu_t,l_t,r_t, \{ (P_t)_x \}_{x \in X})$ is a formal one-parameter deformation if and only if the following system of equations are hold:
\begin{align}
\displaystyle\sum_{i+j=n} \mu_i(\mu_j(a,b),c) &=  \displaystyle\sum_{i+j=n} \mu_i(a,\mu_j(b,c)), \label{defor-eqn1}\\
\displaystyle\sum_{i+j=n} l_i(\mu_j(a,b),u) &= \displaystyle\sum_{i+j=n} l_i(a,l_j(b,u)),\\
\displaystyle\sum_{i+j=n} r_i(l_j(a,u),b) &= \displaystyle\sum_{i+j=n} l_i(a,r_j(u,b),\\
\displaystyle\sum_{i+j=n} r_i(r_j(u,a),b)&=\displaystyle\sum_{i+j=n} r_i(u,\mu_j(a,b)),\\
\displaystyle\sum_{i+j+k=n} \mu_i\big(P_{j,x} (u),P_{k,y}(v)\big)&=\displaystyle\sum_{i+j+k=n} P_{i,x} \big(r_j(u,P_{k,y}(v)\big) + \displaystyle\sum_{i+j+k=n} P_{i,y} \big(l_j(P_{k,x}(u),v)\big), \label{defor-eqn5}
\end{align}
for all $u,v\in M,~x,y\in X$ and $n\geq 0$. Note that all the above equations are trivially hold for $n=0$ as $(A,M,\{P_x\}_{x\in X})$ is a matching relative Rota-Baxter algebra. However, for $n=1$, we obtain
\begin{align}
\mu_1(a\cdot b,c) + \mu_1(a,b)\cdot c &= \mu_1(a,b\cdot c) + a\cdot\mu_1(b,c),\label{a}\\
l_1(a\cdot b,u) + \mu_1(a,b)\cdot_M u &= l_1(a,b\cdot_M u) + a\cdot_M l_1(b,u),\label{b1}\\
r_1(a\cdot_M u,b) + l_1(a,u)\cdot_M b &=  l_1(a,u\cdot_M b) + a\cdot_M r_1(u,b),\label{b2}\\
r_1(u\cdot_M a,b) + r_1(u,a)\cdot_M b &= r_1(u,a\cdot b) + u\cdot_M \mu_1(a,b),\label{b3}\\
P_{1,x}(u)\cdot P_y(v) + P_x(u)\cdot P_{1,y}(v) + \mu_1(P_x(u),P_y(v)) &= P_x \big(r_1(u,P_y(v)) + u\cdot_M P_{1,y} (v)\big) + P_{1,x} (u\cdot_MP_y(v))\nonumber\\
&~~+ P_y\big(l_1(P_x(u),v)+P_{1,x} (u)\cdot_Mv\big) + P_{1,y}(P_x(u)\cdot_Mv),\label{c}
\end{align}
for all $u,v\in M$ and $x,y\in X$. Note that the Equation (\ref{a}) is equivalent to $\delta_{\mathrm{Hoch}}(\mu_1) = 0.$ To better understand the remaining equations, we define an element $\beta_1\in \mathrm{Hom}(\mathcal{A}^{1,1},M)$ by
\begin{align*}
 \beta_1(a,u):=l_1(a,u) \quad \text{  and } \quad  \beta_1(u,a):=r_1(u,a), \text{ for } a\in A,u\in M.
\end{align*}
With this notation, the Equations (\ref{b1})-(\ref{b3}) can be simply expressed as $\delta^{\mu_1}_{\mathrm{Hoch}}(\beta_1)= 0.$ Finally, the equation (\ref{c}) is equivalent to
\begin{align*}
{\big(\delta_{\{P_x\}}(P_1) + h_{\{P_x\}}(\mu_1,\beta_1)\big)}_{x,y}(u,v) = 0 .  
\end{align*}
Thus, we have
\begin{align*}
\delta_{\mathrm{mrRBA}}(\mu_1,\beta_1,P_1) = \big(\delta_{\mathrm{Hoch}}(\mu_1),\delta^{\mu_1}_{\mathrm{Hoch}}(\beta_1),\delta_{\{P_x\}}(P_1)+h_{\{P_x\}}(\mu_1,\beta_1)\big)=0.
\end{align*}
This shows that $(\mu_1,\beta_1,P_1)\in Z^2_{\mathrm{mrRBA}}(A,M,\{P_x\}_{x\in X}) $ is a 2-cocycle, called the {\bf infinitesimal} of the given deformation. Hence it gives rise to a cohomology class in $H^2_{\mathrm{mrRBA}}(A,M,\{P_x\}_{x\in X})$.

\begin{defn} Let $(A,M,\{P_x\}_{x\in X})$ be a matching relative Rota-Baxter algebra. Two formal one-parameter deformations $(\mu_t,l_t,r_t,  \{ (P_t)_x \}_{x \in X})$ and $(\mu_t',l_t',r_t', \{ (P'_t)_x \}_{x \in X})$ are said to be {\bf equivalent} if there exist formal isomorphisms
\begin{align*}
\varphi_t = \displaystyle \sum_{i\geq 0} t^i \varphi_i: A[[t]]\rightarrow A[[t]] ~~ \text{  and } ~~ \psi_t = \displaystyle \sum_{i\geq 0} t^i\psi_i :M[[t]]\rightarrow M[[t]]
\end{align*}
(where $\varphi_i\in \mathrm{Hom}(A,A),\psi_i\in \mathrm{Hom}(M,M) $ for all $i\geq 0$, with $\varphi_0=\mathrm{id}_A$ and $\psi_0= \mathrm{id}_M$) such that the pair $(\varphi_t,\psi_t)$ is a morphism of matching relative Rota-Baxter algebras from
\begin{align*}
\big((A[[t]],\mu_t),(M[[t]],l_t,r_t),\{(P_t)_x\}_{x\in X}\big) ~ \text{ to } ~ \big((A[[t]],\mu_t'),(M[[t]],l_t',r_t'),\{(P_t')_x\}_{x\in X}\big).  
\end{align*}
\end{defn}

\medskip

Thus, it follows from the above definition that $(\mu_t,l_t,r_t,  \{ (P_t)_x \}_{x \in X})$ and $(\mu_t',l_t',r_t',  \{ (P'_t)_x \}_{x \in X})$ are equivalent if and only if the following system of equations are hold:
\begin{align}
\displaystyle\sum_{i+j=n} \varphi_i(\mu_j(a,b)) &=\displaystyle\sum_{i+j+k=n}\mu'_i(\varphi_j(a),\varphi_k(b)), \label{equiv-id1}\\
\displaystyle\sum_{i+j=n} \psi_i (l_j(a,u)) &=\displaystyle\sum_{i+j+k=n}l'_i(\varphi_j(a),\psi_j(u)),\\
\displaystyle\sum_{i+j=n} \psi_i (r_j(u,a)) &=\displaystyle\sum_{i+j+k=n}r'_i(\psi_j(u),\varphi_k(a)),\\
\displaystyle\sum_{i+j=n} \phi_i\circ P_{j,x}  &=\displaystyle\sum_{i+j=n} P'_{i,x} \circ \psi_j, \label{equiv-id4}
\end{align}
for all $a,b\in A,~u\in M,~x\in X$ and $n\geq 0$. All these relations are hold for $n=0$ as $\varphi_0=\mathrm{id}_A$ and $\psi_0=\mathrm{id}_M$. However, for $n=1$, we obtain four relations which are equivalent to the single identity
\begin{align*}
(\mu_1,\beta_1,P_1) - (\mu_1',\beta_1',P_1')  =\delta_{\mathrm{mrRBA}} (\varphi_1,\psi_1).
\end{align*}
This shows that the 2-cocycles $(\mu_1,\beta_1,P_1)$ and $(\mu_1',\beta_1',P_1')$ are cohomologous. Hence they corresponds to the same cohomology class. As a summary of the above discussions, we get the following.

\begin{thm} Let $(A,M,\{P_x\}_{x\in X})$ be a matching relative Rota-Baxter algebra. Then the infinitesimal of any formal one-parameter deformation of $(A,M,\{P_x\}_{x\in X})$ is a 2-cocycle. Moreover, the corresponding cohomology class in $H^2_{\mathrm{mrRBA}}(A,M,\{P_x\}_{x\in X})$  depends only on the equivalence class of the deformation.
\end{thm}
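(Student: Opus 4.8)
The plan is purely computational: extract the order-$t$ (i.e.\ $n=1$) component of the defining equations of a formal deformation, respectively of an equivalence, and match the resulting identities against the explicit formulas for $\delta_{\mathrm{Hoch}}$, $\delta^{\alpha}_{\mathrm{Hoch}}$, $\delta_{\{P_x\}}$ and $h_{\{P_x\}}$ that enter the definition of $\delta_{\mathrm{mrRBA}}$. First I would specialize the system (\ref{defor-eqn1})--(\ref{defor-eqn5}) to $n=1$, which produces exactly the four groups of identities (\ref{a}), (\ref{b1})--(\ref{b3}) and (\ref{c}). Identity (\ref{a}) is, verbatim, the statement $\delta_{\mathrm{Hoch}}(\mu_1)=0$. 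Packaging $l_1$ and $r_1$ into a single cochain $\beta_1\in\mathrm{Hom}(\mathcal{A}^{1,1},M)$ by $\beta_1(a,u)=l_1(a,u)$ and $\beta_1(u,a)=r_1(u,a)$, the three bimodule conditions (\ref{b1})--(\ref{b3}) assemble term by term into the single equation $\delta^{\mu_1}_{\mathrm{Hoch}}(\beta_1)=0$, using the formula for $\delta^{\alpha}_{\mathrm{Hoch}}$ recalled in Section~\ref{sec2}.

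The delicate step is identity (\ref{c}). Here I would expand both sides into their individual summands and recognize, in the resulting expression, the element $\mu_1(P_x(u),P_y(v)) - P_x(r_1(u,P_y(v))) - P_y(l_1(P_x(u),v))$ as $\big(h_{\{P_x\}}(\mu_1,\beta_1)\big)_{x,y}(u,v)$ (the $n=2$ instance of $h_{\{P_x\}}$), and the remaining six terms — those quadratic in one $P_{\bullet}$ and one $P_{1,\bullet}$, together with the terms $P_{\bullet}(\cdot_M)$ and $P_{1,\bullet}(\cdot_M)$ — as $\big(\delta_{\{P_x\}}(P_1)\big)_{x,y}(u,v)$ (the $n=1$ instance of $\delta_{\{P_x\}}$ applied to $P_1\in C^1_{\{P_x\}}(M,A)$). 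Thus (\ref{c}) is precisely $\big(\delta_{\{P_x\}}(P_1)+h_{\{P_x\}}(\mu_1,\beta_1)\big)_{x,y}(u,v)=0$. Combining the three pieces yields $\delta_{\mathrm{mrRBA}}(\mu_1,\beta_1,P_1)=\big(\delta_{\mathrm{Hoch}}(\mu_1),\,\delta^{\mu_1}_{\mathrm{Hoch}}(\beta_1),\,\delta_{\{P_x\}}(P_1)+h_{\{P_x\}}(\mu_1,\beta_1)\big)=0$, so the infinitesimal $(\mu_1,\beta_1,P_1)$ is a $2$-cocycle and determines a class in $H^2_{\mathrm{mrRBA}}(A,M,\{P_x\}_{x\in X})$.

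For the second assertion I would run the analogous computation on an equivalence $(\varphi_t,\psi_t)$ of two deformations. Unwinding ``$(\varphi_t,\psi_t)$ is a morphism of matching relative Rota-Baxter algebras over $\mathbb{K}[[t]]$'' gives the system (\ref{equiv-id1})--(\ref{equiv-id4}); taking its order-$t$ coefficient and using $\varphi_0=\mathrm{id}_A$, $\psi_0=\mathrm{id}_M$ yields four identities which, after the same repackaging of the $l,r$-parts into a $\beta$, combine into $(\mu_1,\beta_1,P_1)-(\mu_1',\beta_1',P_1')=\delta_{\mathrm{mrRBA}}(\varphi_1,\psi_1)$. Hence the infinitesimals of equivalent deformations are cohomologous and define the same class in $H^2_{\mathrm{mrRBA}}(A,M,\{P_x\}_{x\in X})$. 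The only real obstacle is bookkeeping: carefully tracking the signs, the index shifts, and above all which label $x_i$ gets deleted in each summand of $\delta_{\{P_x\}}$ and $h_{\{P_x\}}$, so that the matching of (\ref{c}) with $\delta_{\mathrm{mrRBA}}$ is exact; no idea beyond the cochain complex already constructed is required.
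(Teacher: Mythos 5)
Your proposal is correct and follows essentially the same route as the paper: the paper's proof is precisely the discussion preceding the theorem, namely extracting the $n=1$ coefficients of (\ref{defor-eqn1})--(\ref{defor-eqn5}) to get (\ref{a})--(\ref{c}), identifying these with the three components of $\delta_{\mathrm{mrRBA}}(\mu_1,\beta_1,P_1)=0$ via the packaging $\beta_1=(l_1,r_1)$, and then reading the $n=1$ coefficient of (\ref{equiv-id1})--(\ref{equiv-id4}) as $(\mu_1,\beta_1,P_1)-(\mu_1',\beta_1',P_1')=\delta_{\mathrm{mrRBA}}(\varphi_1,\psi_1)$. Your identification of the six $\delta_{\{P_x\}}(P_1)$ terms and the three $h_{\{P_x\}}(\mu_1,\beta_1)$ terms in (\ref{c}) checks out against the explicit formulas.
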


In the following, we will consider a truncated version of formal deformations. Such deformations are called infinitesimal deformations. More precisely, we have the following definition.

\begin{defn} Let $(A,M,\{P_x\}_{x\in X})$ be a matching relative Rota-Baxter algebra. An {\bf infinitesimal deformation} of $(A,M,\{P_x\}_{x\in X})$ is a formal one-parameter deformation over the ring $\mathbb{K}[[t]]/(t^2)$.
\end{defn}

Thus, an infinitesimal deformation $(\mu_t,l_t,r_t,  \{ (P_t)_x \}_{x \in X})$ is given by
\begin{align*}
 \mu_t =\mu + t\mu_1 , \quad l_t= l +t l_1, \quad r_t=r+t r_1 ~~ \text{ and } ~~ \{ (P_t)_x=P_x +t P_{1,x} \}_{x \in X}   
\end{align*}
such that the identities (\ref{defor-eqn1})-(\ref{defor-eqn5}) holds for $n=0,1$.
Similarly, one can define equivalence between two infinitesimal deformations. More precisely, let $(\mu_t,l_t,r_t,  \{ (P_t)_x \}_{x \in X})$ and $(\mu_t',l_t',r_t',  \{ (P'_t)_x \}_{x \in X})$ be two infinitesimal deformations. They are equivalent if there exist sums
\begin{align*}
\varphi_t= \mathrm{id}_A+t\varphi_1 ~~ \text{ and  } ~~ \psi_t= \mathrm{id}_M+t\psi_1
\end{align*}
such that the identities (\ref{equiv-id1})-(\ref{equiv-id4}) are hold for $n=0,1$. We are now ready to give another important result of this section.

\begin{thm}
 Let $(A,M,\{P_x\}_{x\in X})$ be a matching relative Rota-Baxter algebra. The set of all equivalence classes of infinitesimal deformations of $(A,M,\{P_x\}_{x\in X})$ is classified by the second cohomology group $H^2_{\mathrm{mrRBA}}(A,M,\{P_x\}_{x\in X})$.
 \end{thm}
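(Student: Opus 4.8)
The plan is to construct an explicit bijection between the set of equivalence classes of infinitesimal deformations and $H^2_{\mathrm{mrRBA}}(A,M,\{P_x\}_{x\in X})$, reusing the computations already carried out above for formal one-parameter deformations modulo $t^2$. The skeleton is: (a) attach to each infinitesimal deformation a $2$-cocycle; (b) show this is a bijection onto $Z^2_{\mathrm{mrRBA}}$; (c) show equivalent deformations give cohomologous cocycles and conversely; (d) conclude.

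First I would set up the map from infinitesimal deformations to cocycles. Given an infinitesimal deformation $(\mu_t,l_t,r_t,\{(P_t)_x\}_{x\in X})$ with $\mu_t=\mu+t\mu_1$, $l_t=l+tl_1$, $r_t=r+tr_1$ and $(P_t)_x=P_x+tP_{1,x}$, I package $l_1$ and $r_1$ into the single element $\beta_1\in\mathrm{Hom}(\mathcal{A}^{1,1},M)$ defined by $\beta_1(a,u):=l_1(a,u)$ and $\beta_1(u,a):=r_1(u,a)$, and I set $P_1:=\oplus_{x\in X}P_{1,x}\in\mathfrak{g}^1$. The defining equations of an infinitesimal deformation are precisely (\ref{defor-eqn1})--(\ref{defor-eqn5}) for $n=0,1$; the $n=0$ cases hold automatically, and --- as already shown in the discussion preceding the theorem --- the $n=1$ cases are exactly equivalent to $\delta_{\mathrm{Hoch}}(\mu_1)=0$, $\delta^{\mu_1}_{\mathrm{Hoch}}(\beta_1)=0$ and $\delta_{\{P_x\}}(P_1)+h_{\{P_x\}}(\mu_1,\beta_1)=0$, that is, to $\delta_{\mathrm{mrRBA}}(\mu_1,\beta_1,P_1)=0$. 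Hence $(\mu_1,\beta_1,P_1)\in Z^2_{\mathrm{mrRBA}}(A,M,\{P_x\}_{x\in X})$, and the assignment $(\mu_t,l_t,r_t,\{(P_t)_x\})\mapsto(\mu_1,\beta_1,P_1)$ is well defined. Conversely, starting from any $2$-cocycle $(\mu_1,\beta_1,P_1)$, I recover $l_1,r_1$ by restricting $\beta_1$ and $P_{1,x}$ from $P_1$, and define $\mu_t,l_t,r_t,(P_t)_x$ by the above truncated formulas; the cocycle condition then gives back exactly (\ref{defor-eqn1})--(\ref{defor-eqn5}) at $n=1$, so this is an honest infinitesimal deformation. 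Thus these two constructions are mutually inverse bijections between infinitesimal deformations of $(A,M,\{P_x\}_{x\in X})$ and $Z^2_{\mathrm{mrRBA}}(A,M,\{P_x\}_{x\in X})$.

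Next I would pass to equivalence classes. If $(\mu_t,l_t,r_t,\{(P_t)_x\})$ and $(\mu'_t,l'_t,r'_t,\{(P'_t)_x\})$ are equivalent infinitesimal deformations via $\varphi_t=\mathrm{id}_A+t\varphi_1$ and $\psi_t=\mathrm{id}_M+t\psi_1$, then the equivalence identities (\ref{equiv-id1})--(\ref{equiv-id4}) at $n=1$ were shown above to amount to the single relation $(\mu_1,\beta_1,P_1)-(\mu'_1,\beta'_1,P'_1)=\delta_{\mathrm{mrRBA}}(\varphi_1,\psi_1)$. Therefore equivalent infinitesimal deformations produce cohomologous $2$-cocycles, so the bijection of the previous paragraph descends to a well-defined map from the set of equivalence classes of infinitesimal deformations into $H^2_{\mathrm{mrRBA}}(A,M,\{P_x\}_{x\in X})$. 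For injectivity, suppose the cocycles $(\mu_1,\beta_1,P_1)$ and $(\mu'_1,\beta'_1,P'_1)$ attached to two infinitesimal deformations differ by a coboundary $\delta_{\mathrm{mrRBA}}(\varphi_1,\psi_1)$ for some $(\varphi_1,\psi_1)\in C^1_{\mathrm{mrRBA}}(A,M,\{P_x\})=\mathrm{Hom}(A,A)\oplus\mathrm{Hom}(M,M)$; reading this relation componentwise recovers exactly (\ref{equiv-id1})--(\ref{equiv-id4}) at $n=1$ for $\varphi_t=\mathrm{id}_A+t\varphi_1$, $\psi_t=\mathrm{id}_M+t\psi_1$, so the two deformations are equivalent. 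Surjectivity onto $H^2$ is immediate from surjectivity onto $Z^2$ established above. Combining these, the induced map is a bijection between equivalence classes of infinitesimal deformations and $H^2_{\mathrm{mrRBA}}(A,M,\{P_x\}_{x\in X})$.

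The bulk of the work --- translating the deformation equations into $\delta_{\mathrm{Hoch}}(\mu_1)=0$, $\delta^{\mu_1}_{\mathrm{Hoch}}(\beta_1)=0$, $\delta_{\{P_x\}}(P_1)+h_{\{P_x\}}(\mu_1,\beta_1)=0$, and the equivalence equations into $(\mu_1,\beta_1,P_1)-(\mu'_1,\beta'_1,P'_1)=\delta_{\mathrm{mrRBA}}(\varphi_1,\psi_1)$ --- has already been done in the text preceding the theorem. So the only genuine point requiring care is the bookkeeping that makes the correspondence $\beta_1\leftrightarrow(l_1,r_1)$ compatible on both sides simultaneously, together with checking that passing to the truncated ring $\mathbb{K}[[t]]/(t^2)$ loses no information beyond the $t^i$ with $i\geq 2$; I do not anticipate any substantial obstacle beyond this routine verification.
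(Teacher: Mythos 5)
Your proposal is correct and follows essentially the same route as the paper's own (very brief) proof: both sides reduce the cocycle/coboundary conditions to the $n=1$ instances of the deformation and equivalence equations already worked out in the preceding discussion, and both use the truncated formulas $\mu+t\mu_1$, $l+tl_1$, $r+tr_1$, $P_x+tP_{1,x}$ to go back from a $2$-cocycle to an infinitesimal deformation. Your write-up simply makes explicit the bijectivity bookkeeping that the paper leaves as ``easy to see.''
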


\begin{proof} One can easily show that any infinitesimal deformation gives rise to a $2$-cocycle and the corresponding cohomology class depends only on the equivalence class of the infinitesimal deformation.

Conversely, given a 2-cocycle $(\mu_1,\beta_1,P_1)\in Z^2_{\mathrm{mrRBA}}(A,M,\{P_x\}_{x\in X})$, one can construct an infinitesimal deformation  $(\mu_t,l_t,r_t,  \{ (P_t)_x \}_{x \in X})$ as follows:
\begin{align*}
 \mu_t=\mu+t\mu_1 , \quad l_t=l+t l_1, \quad r_t=r+t r_1,  \quad \mathrm{and} \quad  \{ (P_t)x =P_x +t (P_1)_x \}_{x \in X},  
\end{align*}
where $l_1(a,u):=\beta_1(a,u)$ and $r_1(u,a):=\beta_1(u,a)$, for all $a\in A,~u\in M$. Moreover, it is easy to see that if $(\mu_1,\beta_1,P_1)$ and $(\mu'_1,\beta'_1,P_1')$ are two cohomologous $2$-cocycles then the corresponding infinitesimal deformations are equivalent. This proves the desired result.
\end{proof}

\medskip

\section{Matching dendriform algebras}\label{sec6}
In this section, we introduce the cohomology of a matching dendriform algebra using nonsymmetric operads. As an application of the cohomology, we study deformations of a matching dendriform algebra. Finally, we discuss a relationship between the cohomology of a matching relative Rota-Baxter algebra and the cohomology of the induced matching dendriform algebra. We first recall some basics about nonsymmetric operads.

\begin{defn} \cite{gers-voro,lod-val-book} (i) A {\bf nonsymmetric operad} is a pair $\mathcal{O}=\big(\{\mathcal{O}(k)\}_{k=1}^{\infty}, \circ_i\big)$ consisting of a collection $\{\mathcal{O}(k)\}_{k=1}^{\infty}$ of $\mathbb{K}$-modules and linear maps (called partial compositions)
\begin{align*}
 \circ_i: \mathcal{O}(k)\otimes \mathcal{O}(l)\rightarrow\mathcal{O}(k+l-1), \text{ for } k,l\geq 1 \text{ and } 1\leq i\leq k
\end{align*}
satisfying
\begin{align*}
(f \circ_i g) \circ_{i+j-1}h =~& f \circ_i (g \circ_j h), \text{ for } 1\leq j\leq k,~ 1\leq j\leq l,\\
(f \circ_i g) \circ_{j+n-1} h =~& (f \circ_j h) \circ_{i}g, \text{ for }  1\leq i<j\leq k,
\end{align*}
for $f\in \mathcal{O}(k),~g\in \mathcal{O}(l),~h\in \mathcal{O}(m)$; and there exists an element $\mathbbm{1} \in \mathcal{O}(1)$ that satisfies $f \circ_i \mathbbm{1} = f = \mathbbm{1} \circ_1 f$, for $f\in\mathcal{O}(k)$ and $1\leq i\leq k.$

\medskip

(ii) A {\bf multiplication} in a nonsymmetric operad $\mathcal{O}=\big(\{\mathcal{O}(k)\}_{k=1}^{\infty},\circ_i\big)$ is an element $\pi\in \mathcal{O}(2)$ that satisfies $\pi \circ_1 \pi = \pi \circ_2 \pi$.
\end{defn}

Let $\mathcal{O}=\big(\{\mathcal{O}(k)\}_{k=1}^{\infty}, \circ_i\big)$ be a nonsymmetric operad. Then the graded $\mathbb{K}$-module $\displaystyle \oplus_{k \geq 1}\mathcal{O}(k)$ inherits a degree $-1$ graded Lie bracket given by
\begin{align*}
\{ \! \! \{ f,g \} \! \! \}:= \displaystyle\sum_{i=1}^{k}(-1)^{(i-1)(l-1)} f \circ_i g~-~(-1)^{(k-1)(l-1)} \displaystyle\sum_{i=1}^{l} (-1)^{(i-1)(k-1)}g \circ_i f,  
\end{align*}
for $f\in \mathcal{O}(k)$ and $g\in \mathcal{O}(l)$. In other words, the shifted graded $\mathbb{K}$-module $\displaystyle \oplus_{k\geq 0}\mathcal{O}(k+1)$ with the above bracket forms a graded Lie algebra. Additionally, let $\pi\in \mathcal{O}(2)$ be a multiplication on the operad $\mathcal{O}$. Then $\pi$ induces a differential
\begin{align*}
\delta_{\pi}:\mathcal{O}(n)\rightarrow \mathcal{O}(n+1),~ \delta_{\pi}(f)=(-1)^{n+1} \{ \! \! \{ \pi,f \} \! \! \},~ \text{ for } f\in \mathcal{O}(n).
\end{align*}
Therefore, $\{\mathcal{O}(\bullet),\delta_{\pi}\}$ becomes a cochain complex. The corresponding cohomology groups are called the cohomology induced by the multiplication $\pi$.

Next, we aim to construct a new nonsymmetric operad associated to a $\mathbb{K}$-module $D$. First, let $C_k$ be the set of first $k$ natural numbers. For our convenience, we write $C_k = \{ [1], [2], \ldots, [k] \}$. For any $k\geq 1$,  let $\mathfrak{h}^k$ be the set of all elements of the form $f = \displaystyle \oplus_{{x_1,\ldots,x_k}\in X} f_{x_1,\ldots,x_k}$ with $f_{x_1,\ldots,x_k}\in \mathrm{Hom}\big(\mathbb{K}[C_k]\otimes D^{\otimes{k}},D \big)$. Thus, an element $f\in \mathfrak{h}^k$ is given by a collection $\big\{f_{x_1,\ldots,x_k}:\mathbb{K}[C_k]\otimes D^{\otimes{k}}\rightarrow D\big\}_{{x_1,\ldots,x_k}\in X}$ of maps labelled by the elements of $X^{\times k}$. For $[r] \in C_k$, we denote the map $f_{x_1, \ldots, x_k} \big( [r]; ~, \ldots, ~ ) : D^{\otimes k} \rightarrow D$ simply by $f^{[r]}_{x_1, \ldots, x_k}$. Note that $\mathfrak{h}^k$ is a $\mathbb{K}$-madule with the obvious abelian group structure and the $\mathbb{K}$-action. We define
\begin{align*}
\mathcal{O}_D(k)=\big\{f = \displaystyle \oplus_{{x_1,\ldots,x_k}\in X} f_{x_1,\ldots,x_k}\in \mathfrak{h}^k ~~|~~ f_{x_1,\ldots,x_k}^{[r]} \text{ doesn't depend on } x_r, \text{ for } 1\leq r\leq k \big\}.  
\end{align*}
For any $f\in \mathcal{O}_D(k),~g\in\mathcal{O}_D(l)$ and $1\leq i\leq k$, we define an element $f \circ_i g \in \mathcal{O}_D(k+l-1)$ by

\medskip
\begin{align}\label{circ-dend-match}
&(f\circ_ig)^{[r]}_{x_1,\ldots,x_{k+l-1}}(a_1,\ldots,a_{k+l-1}) =\\
&= \begin{cases}
\sum_{s=1}^l f^{[r]}_{x_1,\ldots,x_{i-1}, x_{i-1+s},x_{i+l},\ldots,x_{k+l-1}} \big(a_1,\ldots,a_{i-1},g^{[s]}_{x_i,\ldots,x_{i+l-1}}(a_i,\ldots,a_{i+l-1}),a_{i+l},\ldots,a_{k+l-1}\big),\\
 \hspace{12cm}   \text{ if } 1 \leq r \leq i-1\medskip \medskip \\
 f^{[i]}_{x_1,\ldots,x_{i-1},x_r,x_{i+l},\ldots,x_{k+l-1}} \big(a_1,\ldots,a_{i-1},g^{[r-i+1]}_{x_i,\ldots,x_{i+l-1}}(a_i,\ldots,a_{i+l-1}),\ldots,a_{k+l-1}\big), ~ \text{ if } i \leq r \leq i+l-1 \\\\ \medskip \medskip
\sum_{s=1}^l f^{[r-l+1]}_{x_1,\ldots,x_{i-1},x_{i-1+s},x_{i+l},\ldots,x_{k+l-1}} \big(a_1,\ldots,a_{i-1},g^{[s]}_{x_i,\ldots,x_{i+l-1}}(a_i,\ldots,a_{i+l-1}),a_{i+l},\ldots,a_{k+l-1}\big),\\
\hspace{10cm} \text{ if }  i+l \leq r \leq k+l-1, \nonumber
\end{cases}
\end{align}
for $[r] \in C_{k+l-1}$, $x_1, \ldots, x_{k+l-1} \in X$ and $a_1, \ldots, a_{k+l-1} \in D$. With the above notations, it is easy to see that the pair $\mathcal{O}_D=\big(\{\mathcal{O}_D(k)\}_{k=1}^{\infty}, \circ_i\big)$ is a nonsymmetric operad. This operad generalizes the one constructed in \cite{das-dend} to study ordinary dendriform algebras.

\begin{thm} Let $D$ be a $\mathbb{K}$-module. A matching dendriform algebra structure on $D$ is equivalent to a multiplication on the nonsymmetric operad $\mathcal{O}_D$.
\end{thm}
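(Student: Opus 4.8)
The plan is to make the correspondence completely explicit and then check that the single operadic identity $\pi\circ_1\pi=\pi\circ_2\pi$ in $\mathcal{O}_D(3)$ decomposes, component by component, into the three defining identities (\ref{mda-1})--(\ref{mda-3}) of a matching dendriform algebra. Recall that a multiplication on $\mathcal{O}_D$ is, by definition, an element $\pi\in\mathcal{O}_D(2)$ with $\pi\circ_1\pi=\pi\circ_2\pi$.

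\textbf{Step 1 (the dictionary).} Given a matching dendriform algebra $(D,\{\prec_x,\succ_x\}_{x\in X})$, I would define $\pi\in\mathcal{O}_D(2)$ by prescribing its two $C_2$-components $\pi^{[1]}_{x_1,x_2}(a,b):=a\prec_{x_2}b$ and $\pi^{[2]}_{x_1,x_2}(a,b):=a\succ_{x_1}b$, for $a,b\in D$ and $x_1,x_2\in X$. One checks at once that $\pi^{[1]}_{x_1,x_2}$ is independent of $x_1$ and $\pi^{[2]}_{x_1,x_2}$ is independent of $x_2$, so $\pi$ genuinely lies in $\mathcal{O}_D(2)$; conversely, reading these formulas backwards turns any $\pi\in\mathcal{O}_D(2)$ into two families $\{\prec_x\}_{x\in X}$ and $\{\succ_x\}_{x\in X}$ of binary operations on $D$. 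These two assignments are visibly mutually inverse, so it only remains to match the condition $\pi\circ_1\pi=\pi\circ_2\pi$ with the axioms (\ref{mda-1})--(\ref{mda-3}).

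\textbf{Step 2 (splitting the multiplication equation).} Since an element of $\mathcal{O}_D(3)$ is a tuple indexed by $[r]\in C_3=\{[1],[2],[3]\}$, the equation $\pi\circ_1\pi=\pi\circ_2\pi$ is equivalent to the three equalities obtained by fixing each $[r]$. I would compute $(\pi\circ_i\pi)^{[r]}_{x_1,x_2,x_3}(a_1,a_2,a_3)$ for $i=1,2$ and $r=1,2,3$ directly from the partial composition formula (\ref{circ-dend-match}) with $k=l=2$: the three branches of that formula (namely $1\le r\le i-1$, $i\le r\le i+l-1$, and $i+l\le r\le k+l-1$) account respectively for the cases in which the surviving color $[r]$ lies to the left of, inside, or to the right of the inserted binary operation. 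The numerous color subscripts produced by (\ref{circ-dend-match}) then collapse by the defining feature of $\mathcal{O}_D$, that the $[1]$-slot of $\pi$ ignores its first color argument and the $[2]$-slot ignores its second. This index-chasing is the only real work in the proof: keeping the triple of color subscripts aligned across the three branches is where the bookkeeping lies, though no conceptual difficulty arises.

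\textbf{Step 3 (reading off the axioms).} Writing $a=a_1$, $b=a_2$, $c=a_3$, the computation of Step 2 yields that the $[1]$-component of $\pi\circ_1\pi=\pi\circ_2\pi$ is $(a\prec_{x_2}b)\prec_{x_3}c=a\prec_{x_2}(b\prec_{x_3}c)+a\prec_{x_3}(b\succ_{x_2}c)$, which is (\ref{mda-1}); the $[2]$-component is $(a\succ_{x_1}b)\prec_{x_3}c=a\succ_{x_1}(b\prec_{x_3}c)$, which is (\ref{mda-2}); and the $[3]$-component is $(a\prec_{x_2}b)\succ_{x_1}c+(a\succ_{x_1}b)\succ_{x_2}c=a\succ_{x_1}(b\succ_{x_2}c)$, which is (\ref{mda-3}). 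Hence $\pi$ is a multiplication on $\mathcal{O}_D$ precisely when the associated operations $\{\prec_x,\succ_x\}_{x\in X}$ make $D$ a matching dendriform algebra, which establishes the equivalence. Specializing all labels $x_i$ to a single value recovers the known dendriform case of \cite{das-dend}.
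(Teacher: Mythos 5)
Your proposal is correct and follows essentially the same route as the paper: you set up the identical dictionary $\pi^{[1]}_{x_1,x_2}(a,b)=a\prec_{x_2}b$, $\pi^{[2]}_{x_1,x_2}(a,b)=a\succ_{x_1}b$, and then split $\pi\circ_1\pi=\pi\circ_2\pi$ into its three $C_3$-components, which you correctly identify with the identities (\ref{mda-1})--(\ref{mda-3}) exactly as the paper does. The three component identities you read off in Step 3 agree with the paper's computation, so nothing is missing.
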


\begin{proof}
First, note that an element $\pi = \oplus_{x, y \in X} \pi_{x,y} \in \mathcal{O}_D (2)$ is equivalent to a collection of linear maps $\{ \prec_x, \succ_x : D \otimes D \rightarrow D \}_{x \in X}$. The correspondence is given by
\begin{align*}
\pi^{[1]}_{x,y} = a \prec_y b \quad \text{ and } \quad \pi^{[2]}_{x,y} = a \succ_x b, \text{ for } x, y \in X \text{ and } a, b \in A.
\end{align*}
Then it follows from (\ref{circ-dend-match}) that
\begin{align*}
\big( \pi \circ_1 \pi - \pi \circ_2 \pi  \big)^{[1]}_{x,y,z} (a,b,c) =~& (a \prec_y b) \prec_z c - a \prec_y (b \prec_z c) - a \prec_z (b \succ_y c) ,\\
\big( \pi \circ_1 \pi - \pi \circ_2 \pi  \big)^{[2]}_{x,y,z} (a,b,c) =~& (a \succ_x b) \prec_z c - a \succ_x (b \prec_z c),\\
\big( \pi \circ_1 \pi - \pi \circ_2 \pi  \big)^{[3]}_{x,y,z} (a,b,c) =~& (a \prec_y b) \succ_x c + (a \succ_x b) \succ_y c - a \succ_x (b \succ_y c),
\end{align*}
for $x,y,z \in A$ and $a, b, c \in A$. This shows that $\pi \circ_1 \pi = \pi \circ_2 \pi$ (i.e. $\pi$ is a multiplication in the operad $\mathcal{O}_D$) if and only if the collection $\{ \prec_x , \succ_x \}_{x \in X}$ satisfies the matching dendriform algebra identities (\ref{mda-1})-(\ref{mda-3}). This completes the proof.
\end{proof}

Let $(D,\{\prec_x,\succ_x\}_{x\in X})$ be a given matching dendriform algebra. Then by the previous theorem, this matching dendriform algebra structure induces the multiplication $\pi_D \in \mathcal{O}_D (2)$ on the operad $\mathcal{O}_D$. For each $n\geq 1$, we define
\begin{align*}
C^n_{\mathrm{mDA}}(D,\{\prec_x,\succ_x\}_{x\in X})=\mathcal{O}_D(n)=  \big\{f = \displaystyle \oplus_{{x_1,\ldots,x_n}\in X} f_{x_1,\ldots,x_n} ~~|~~ f_{x_1,\ldots,x_n}^{[r]} & \text{ doesn't depend on } x_r,\\ & \text{ for }  1\leq r\leq n \big\}  
\end{align*}
and a map $\delta_{\mathrm{mDA}}:C^n_{\mathrm{mDA}}(D,\{\prec_x,\succ_x\}_{x\in X})\rightarrow C^{n+1}_{\mathrm{mDA}}(D,\{\prec_x,\succ_x\}_{x\in X})$ by
\begin{align*}
\delta_{\mathrm{mDA}}(f)=~& (-1)^{n+1} \{ \! \! \{ \pi_D,f \} \! \! \}\\
=~&(-1)^{n+1}\big(\pi_D \circ_1 f+(-1)^{n-1}~\pi_D \circ_2 f - \displaystyle\sum_{i=1}^{n}(-1)^{i-1} f \circ_i \pi_D \big)\\
=~& \pi_D \circ_2f + \displaystyle\sum_{i=1}^{n} (-1)^i~ f \circ_i \pi_D + (-1)^{n+1}~\pi_D \circ_1 f.
\end{align*}
Since $\pi_D$ is a multiplication, it follows that $\big\{C^\bullet_{\mathrm{mDA}}(D,\{\prec_x,\succ_x\}_{x\in X}),~\delta_{\mathrm{mDA}}\big\}$ is a cochain complex. The corresponding cohomology groups are called the {\bf cohomology} of the given matching dendriform algebra $(D,\{\prec_x,\succ_x\}_{x\in X})$, and they are denoted by $H^\bullet_\mathrm{mDA} (D,\{\prec_x,\succ_x\}_{x\in X})$.

\begin{remark}
In Remark \ref{embedding} we have seen that a matching dendriform algebra $(D, \{ \prec_x, \succ_x \}_{x \in X})$ can be embedded into the matching Rota-Baxter algebra $\big( (D \otimes \mathbb{K}[X]) \oplus D, \{ P_x \}_{x \in X} \big)$. For each $n \geq 1$, we define a map $\mathcal{H}_n : C^n_\mathrm{mDA} ( D, \{ \prec_x, \succ_x \}_{x \in X}) \rightarrow C^n_\mathrm{Hoch} \big(  (D \otimes \mathbb{K}[X]) \oplus D, (D \otimes \mathbb{K}[X]) \oplus D \big)$ by
\begin{align*}
    \mathcal{H}_n (f) \big(  (a_1 \otimes x_1, 0), \ldots, (a_n \otimes x_n, 0)  \big) =~& \big(  \sum_{r=1}^n ( f^{[r]}_{x_1, \ldots, x_n} (a_1, \ldots, a_n) \otimes x_r), 0  \big),\\
    \mathcal{H}_n (f)  \big(  (a_1 \otimes x_1, 0), \ldots, (0, a_i'), \ldots,  (a_n \otimes x_n, 0)  \big) =~& \big( 0, f^{[i]}_{x_1, \ldots, x_n} (a_1, \ldots,a_i', \ldots, a_n)  \big),\\
    \text{ and the value of } \mathcal{H}_n(f) \text{ is zero if two or more} & \text{ inputs are of the form } (0, a).
\end{align*}
Then it is straightforward to verify that $\delta_\mathrm{Hoch} \circ \mathcal{H}_n  = \mathcal{H}_{n+1} \circ \delta_\mathrm{mDA}$, for all $n \geq 1$. Hence there is a morphism $\mathcal{H}_\bullet : H^\bullet_\mathrm{mDA} (  D, \{ \prec_x, \succ_x \}_{x \in X}   ) \rightarrow H^\bullet_\mathrm{Hoch}   \big(   (D \otimes \mathbb{K}[X]) \oplus D, (D \otimes \mathbb{K}[X]) \oplus D \big)$ between cohomology groups.
\end{remark}

\subsection{Deformations of matching dendriform algebras} In this subsection, we give an application of the cohomology of a matching dendriform algebra. More precisely, we study formal one-parameter deformations of a matching dendriform algebra and show that such deformations are closely related to the cohomology theory.

\begin{defn}
(i) Let $(D, \{ \prec_x, \succ_x \}_{x \in X})$ be a matching dendriform algebra. A {\bf formal one-parameter deformation} of $(D, \{ \prec_x, \succ_x \}_{x \in X})$ consists of a collection $\{ (\prec_t)_x, (\succ_t)_x \}_{x \in X}$ of formal power series
\begin{align*}
(\prec_t)_x = \sum_{i \geq 0} t^i \prec_{i, x} \quad \text{ and } \quad (\succ_t)_x = \sum_{i \geq 0} t^i \succ_{i, x}
\end{align*}
(where $\prec_{i,x} , \succ_{i,x} \in \mathrm{Hom} (D^{\otimes 2}, D)$ for each $i \geq 0$, with $\prec_{0,x} = \prec_x$ and $\succ_{0,x} = \succ_x$ for all $x \in X$) such that the pair $(D[[t]], \{ (\prec_t)_x, (\succ_t)_x \}_{x \in X})$ becomes a matching dendriform algebra over $\mathbb{K}[[t]]$.

\medskip

(ii) Two formal one-parameter deformations $\{ (\prec_t)_x, (\succ_t)_x \}_{x \in X}$ and $\{ (\prec'_t)_x, (\succ'_t)_x \}_{x \in X}$ are said to be {\bf equivalent} if there exists a formal isomorphism $\psi_t = \sum_{i \geq 0} t^i \psi_i : D[[t]] \rightarrow D[[t]]$ of $\mathbb{K}[[t]]$-modules (where $\psi_i \in \mathrm{Hom}(D,D)$ for all $i \geq 0$, with $\psi_0 = \mathrm{id}_D$) such that $\psi_t$ is a morphism of matching dendriform algebras from $(D[[t]], \{ (\prec_t)_x, (\succ_t)_x \}_{x \in X})$ to $(D[[t]], \{ (\prec'_t)_x, (\succ'_t)_x \}_{x \in X})$.
\end{defn}

It follows from the above definition that $\{ (\prec_t)_x, (\succ_t)_x \}_{x \in X}$ is a formal one-parameter deformation if and only if the following system of equations (called deformation equations) are hold
\begin{align}
\sum_{i+j = n} (a\prec_{i,x} b)\prec_{j,y} c&~= \sum_{i+j = n} a\prec_{i,x} (b\prec_{j,y} c) + a\prec_{i,y} (b\succ_{j,x} c), \label{mda-def-1}\\
\sum_{i+j = n} (a\succ_{i,x} b)\prec_{j,y} c &~= \sum_{i+j = n}a\succ_{i,x} (b\prec_{j,y} c), \label{mda-def-2}\\
\sum_{i+j = n} (a\prec_{i,y} b)\succ_{j,x} c + (a\succ_{i,x} b)\succ_{j,y} c &~= \sum_{i+j = n} a\succ_{i,x} (b\succ_{j,y} c), \label{mda-def-3}
\end{align}
for all $a,b,c\in D$, $x,y\in X$ and $n \geq 0$. These equations are obviously hold for $n=0$ as $\prec_{0,x} = \prec_x$ and $\succ_{0,x} = \succ_x$, for all $x \in X$. To write all the above identities for $n=1$ in a compact form, we consider an element $\pi_1 = \oplus_{x, y \in X} (\pi_1)_{x,y} \in \mathcal{O}_D(2) = C^2_\mathrm{mDA} (D, \{ \prec_x, \succ_x \}_{x \in X})$ defined by
\begin{align*}
(\pi_1)^{[1]}_{x,y} (a, b) = a \prec_y b \quad \text{ and } \quad (\pi_1)^{[2]}_{x,y} (a, b) = a \succ_x b, \text{ for } a, b \in A \text{ and } x, y \in X.
\end{align*}
Then it is straightforward to verify that the identities (\ref{mda-def-1})-(\ref{mda-def-3}) for $n=1$ can be equivalently written as $\delta_\mathrm{mDA} (\pi_1) = 0$. In other words, $\pi_1$ is a $2$-cocycle, called the infinitesimal of the given formal one-parameter deformation $\{ (\prec_t)_x, (\succ_t)_x \}_{x \in X}$.

On the other hand, two formal one-parameter deformations $\{ (\prec_t)_x, (\succ_t)_x \}_{x \in X}$ and $\{ (\prec'_t)_x, (\succ'_t)_x \}_{x \in X}$ are equivalent if and only if the following system of equations are hold:
\begin{align*}
\sum_{i+j = n} \psi_{i} \big(  a \prec_{j,x} b \big) = \sum_{i+j+k = n} \psi_i (a) \prec'_{j, x} \psi_k (b) ~~~ \text{ and } ~~~ \sum_{i+j = n} \psi_{i} \big(  a \succ_{j,x} b \big) = \sum_{i+j+k = n} \psi_i (a) \succ'_{j, x} \psi_k (b),
\end{align*}
for $a, b \in A$, $x \in X$ and $n \geq 0$. For $n=1$, both of these equations can be combinedly written as
\begin{align}\label{difference-coboundary}
\pi_1 - \pi_1' = \delta_\mathrm{mDA} (\psi_1).
\end{align}
Here we consider $\psi_1 \in \mathrm{Hom}(D,D)$ as an element of $C^1_\mathrm{mDA} (   D , \{ \prec_x, \succ_x \}_{x \in X})$ by $(\psi_1)^{[1]}_x (a) = \psi_1 (a)$, for all $a \in A$ and $x \in X$. The equation (\ref{difference-coboundary}) shows that the infinitesimal corresponding to equivalent deformations are cohomologous. Hence we obtain the following result.

\begin{thm}
Let $(D, \{ \prec_x, \succ_x \}_{x \in X})$ be a matching dendriform algebra. Then the infinitesimal of any formal one-parameter deformation of $(D, \{ \prec_x, \succ_x \}_{x \in X})$ is a $2$-cocycle in the cohomology complex of the matching dendriform algebra. Moreover, the corresponding class in $H^2_\mathrm{mDA} (D, \{ \prec_x, \succ_x \}_{x \in X})$ depends on the equivalence class of the formal one-parameter deformation.
\end{thm}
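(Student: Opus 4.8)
The plan is to assemble the coefficient-wise computations already recorded in the paragraphs immediately preceding the statement. First I would expand the requirement that $(D[[t]], \{(\prec_t)_x,(\succ_t)_x\}_{x\in X})$ be a matching dendriform algebra over $\mathbb{K}[[t]]$: substituting the power series $(\prec_t)_x$ and $(\succ_t)_x$ into the defining identities (\ref{mda-1})--(\ref{mda-3}) and collecting the coefficient of $t^n$ produces exactly the deformation equations (\ref{mda-def-1})--(\ref{mda-def-3}) for every $n \geq 0$. For $n = 0$ these reduce to the matching dendriform identities for $(D,\{\prec_x,\succ_x\}_{x\in X})$ itself, hence hold automatically.

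Next I would isolate the $n=1$ equations and compare them with the operadic coboundary. Packaging the degree-one terms into the element $\pi_1 = \oplus_{x,y\in X}(\pi_1)_{x,y} \in \mathcal{O}_D(2) = C^2_\mathrm{mDA}(D,\{\prec_x,\succ_x\}_{x\in X})$ given by $(\pi_1)^{[1]}_{x,y}(a,b) = a \prec_{1,y} b$ and $(\pi_1)^{[2]}_{x,y}(a,b) = a \succ_{1,x} b$, one has, from the explicit formula for $\delta_\mathrm{mDA}$, that $\delta_\mathrm{mDA}(\pi_1) = \pi_D \circ_2 \pi_1 - \pi_1 \circ_1 \pi_D + \pi_1 \circ_2 \pi_D - \pi_D \circ_1 \pi_1$. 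I would evaluate the three components $[1],[2],[3]$ of this element with the partial-composition formula (\ref{circ-dend-match}); this is precisely the bookkeeping already performed in the proof that matching dendriform structures correspond to multiplications on $\mathcal{O}_D$, now with one copy of $\pi_D$ replaced by $\pi_1$. It identifies the three components of $\delta_\mathrm{mDA}(\pi_1)$ with the ``left-hand side minus right-hand side'' of (\ref{mda-def-1})--(\ref{mda-def-3}) at $n=1$, so the $n=1$ deformation equations are equivalent to $\delta_\mathrm{mDA}(\pi_1) = 0$. Thus $\pi_1$ is a $2$-cocycle and defines a class in $H^2_\mathrm{mDA}(D,\{\prec_x,\succ_x\}_{x\in X})$.

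For the second assertion I would unpack equivalence in the same manner. A formal isomorphism $\psi_t = \sum_{i \geq 0} t^i \psi_i$ with $\psi_0 = \mathrm{id}_D$ that is a morphism of matching dendriform algebras yields, on comparing coefficients of $t^n$, the families $\sum_{i+j=n}\psi_i(a\prec_{j,x}b) = \sum_{i+j+k=n}\psi_i(a)\prec'_{j,x}\psi_k(b)$ and the analogue for $\succ$; the $n=0$ relations are automatic. Viewing $\psi_1 \in \mathrm{Hom}(D,D)$ as the element of $C^1_\mathrm{mDA}(D,\{\prec_x,\succ_x\}_{x\in X})$ with $(\psi_1)^{[1]}_x(a) = \psi_1(a)$ and computing $\delta_\mathrm{mDA}(\psi_1) = \pi_D \circ_2 \psi_1 - \psi_1 \circ_1 \pi_D + \pi_D \circ_1 \psi_1$ via (\ref{circ-dend-match}), the $n=1$ relations become exactly (\ref{difference-coboundary}), that is $\pi_1 - \pi_1' = \delta_\mathrm{mDA}(\psi_1)$. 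Hence the infinitesimals of equivalent deformations are cohomologous and define the same class in $H^2_\mathrm{mDA}(D,\{\prec_x,\succ_x\}_{x\in X})$. The only point requiring care is the identification of the operadically written maps $\delta_\mathrm{mDA}(\pi_1)$ and $\delta_\mathrm{mDA}(\psi_1)$ with the component-wise equations; this is purely mechanical given (\ref{circ-dend-match}) and entirely parallel to computations already carried out in the paper, so no genuine obstacle arises.
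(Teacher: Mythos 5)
Your proposal is correct and follows essentially the same route as the paper: expand the deformation and equivalence conditions coefficient-wise, package the $n=1$ data into $\pi_1\in\mathcal{O}_D(2)$ and $\psi_1\in C^1_{\mathrm{mDA}}$, and identify the resulting equations with $\delta_{\mathrm{mDA}}(\pi_1)=0$ and $\pi_1-\pi_1'=\delta_{\mathrm{mDA}}(\psi_1)$ via the partial-composition formula. (You even fix a small slip in the paper's text, which defines $(\pi_1)^{[1]}_{x,y}(a,b)$ as $a\prec_y b$ rather than the intended $a\prec_{1,y}b$.)
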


One could also define infinitesimal deformations of a matching dendriform algebra as a formal deformation over the ring $\mathbb{K}[[t]]/(t^2)$. Then it can be shown that the set of all equivalence classes of infinitesimal deformations of a matching dendriform algebra $(D, \{ \prec_x, \succ_x \}_{x \in X})$ is classified by the cohomology group $H^2_\mathrm{mDA} (D, \{ \prec_x, \succ_x \}_{x \in X}).$
\subsection{A relation with the cohomology of matching relative Rota-Baxter algebras}
In section \ref{sec3}, we have seen that a matching relative Rota-Baxter algebra induces a matching dendriform algebra. In this subsection, we find a relation between the cohomology of a matching relative Rota-Baxter algebra and the cohomology of the induced matching dendriform algebra.

Let $(A,M,\{P_x\}_{x\in X})$ be a matching relative Rota-Baxter algebra. We consider the induced matching dendriform algebra $(M,\{\prec_x,\succ_x\}_{x\in X})$ given by
\begin{align*}
u\prec_xv=u\cdot_MP_x(v) ~\text{ and }~ u\succ_xv=P_x(u)\cdot_Mv, \text{ for } u,v\in M \text{ and } x\in X.
\end{align*}
Let $\pi_M\in {\mathcal{O}_M(2)}$ be the corresponding multiplication on the operad $\mathcal{O}_M$. Then we have the following result.

\begin{prop} The collection $\{\theta_n\}_{n=1}^{\infty}$ of maps $\theta_n:C^n_{\{P_x\}}(M,A)\rightarrow C^{n+1}_{\mathrm{mDA}}\big(M,\{\prec_x,\succ_x\}_{x\in X}\big)$ given by
\[
    (\theta_n(f))^{[r]}_{x_1,\ldots,x_{n+1}}(u_1,\ldots,u_{n+1})=
\begin{cases}
    (-1)^{n+1}~~u_1 \cdot_M f_{x_2,\ldots,x_{n+1}}(u_2,\ldots,u_{n+1}), & \mathrm{ if }~ r= 1\\
    0,              & \mathrm{ if }~  2\leq r \leq n\\
    f_{x_1,\ldots,x_n}(u_1,\ldots,u_n)\cdot_Mu_{n+1},              & \mathrm{ if } ~r = n+1
\end{cases}
\]
is a graded Lie algebra morphism from $\big(\oplus_{n\geq 0}C^{n}_{\{P_x\}}(M,A),\llbracket~,~\rrbracket \big)$ to $\big(\oplus _{n\geq 0}C^{n+1}_{\mathrm{mDA}}(M,\{\prec_x,\succ_x\}_{x\in X}),\{ \! \! \{ , \} \! \! \} \big)$.
\\

(ii) The collection $\{\theta_n\}_{n=1}^{\infty}$ induces a morphism $\theta_{\bullet}: H^{\bullet}_{\{P_x\}}(M,A)\rightarrow H^{\bullet+1}_{\mathrm{mDA}}(M,\{\prec_x,\succ_x\}_{x\in X})$ between the cohomology groups.
\end{prop}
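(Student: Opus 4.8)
The plan is to establish the bracket identity $\theta_{m+n}(\llbracket P,Q\rrbracket)=\{\!\!\{\theta_m(P),\theta_n(Q)\}\!\!\}$ for all homogeneous $P\in C^m_{\{P_x\}}(M,A)=\mathfrak{g}^m$ and $Q\in C^n_{\{P_x\}}(M,A)=\mathfrak{g}^n$ by a direct computation, and then to obtain part (ii) as a formal consequence. Two preliminary observations organize the work. First, $\theta_n(f)\in\mathcal{O}_M(n+1)$ is \emph{sparse}: its decorated components $\theta_n(f)^{[r]}$ vanish for $2\le r\le n$, the only nonzero ones being $\theta_n(f)^{[1]}=(-1)^{n+1}\,u_1\cdot_M f(u_2,\dots,u_{n+1})$ and $\theta_n(f)^{[n+1]}=f(u_1,\dots,u_n)\cdot_M u_{n+1}$. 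Second, comparing definitions one sees directly that $\theta_1(P)^{[1]}_{x,y}(u,v)=u\cdot_M P_y(v)=u\prec_y v$ and $\theta_1(P)^{[2]}_{x,y}(u,v)=P_x(u)\cdot_M v=u\succ_x v$, so that $\theta_1(P)=\pi_M$, the multiplication on $\mathcal{O}_M$ corresponding to the induced matching dendriform structure; here $P=\oplus_{x\in X}P_x$ is the Maurer--Cartan element of Theorem \ref{thm-mc-char}.

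For part (i), I would expand both sides of $\theta_{m+n}(\llbracket P,Q\rrbracket)=\{\!\!\{\theta_m(P),\theta_n(Q)\}\!\!\}$ componentwise over the decorations $[r]$, $1\le r\le m+n+1$, using (\ref{derived-bracket}) on the left and the operadic bracket formula together with the partial-composition formula (\ref{circ-dend-match}) on the right. The left-hand side, being $\theta_{m+n}$ of an element of $\mathfrak{g}^{m+n}$, is automatically concentrated in the decorations $[1]$ and $[m+n+1]$, where it equals $(-1)^{m+n+1}\,u_1\cdot_M\llbracket P,Q\rrbracket(u_2,\dots)$ and $\llbracket P,Q\rrbracket(u_1,\dots)\cdot_M u_{m+n+1}$, respectively. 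On the right-hand side, the sparseness of $\theta_m(P)$ and $\theta_n(Q)$ collapses the compositions $\theta_m(P)\circ_i\theta_n(Q)$ and $\theta_n(Q)\circ_i\theta_m(P)$ to a short list of terms; one then checks that (a) every contribution to an intermediate decoration $2\le r\le m+n$ appears in a cancelling pair of the shape $(a\cdot_M u)\cdot_M b$ versus $a\cdot_M(u\cdot_M b)$, which vanishes by the $A$-bimodule axioms, so that the right-hand side is again concentrated in $[1]$ and $[m+n+1]$; and (b) after unwinding the signs $(-1)^{(i-1)n}$ and the prefactor $(-1)^{mn}$ of the $\theta_n(Q)\circ_i\theta_m(P)$-sum, the surviving $[1]$- and $[m+n+1]$-components are exactly $u_1\cdot_M$ (respectively $(\,\cdot\,)\cdot_M u_{m+n+1}$) applied to the three groups of terms of (\ref{derived-bracket}) --- the $P$-outer terms, the $Q$-outer terms, and the multiplication term $P(u_1,\dots,u_m)\cdot Q(u_{m+1},\dots,u_{m+n})$. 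This is the ``matching'' analogue of the computation carried out for ordinary dendriform algebras in \cite{das-dend}. The main obstacle is precisely this bookkeeping: tracking the index shifts of the labels $x_1,\dots,x_{m+n}$ and all signs so that the operadic expansion lines up term by term with (\ref{derived-bracket}), and verifying the cancellations in the intermediate decorations.

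Part (ii) then follows formally. Taking $m=1$ and $P=\oplus_{x\in X}P_x$ in (i) and using $\theta_1(P)=\pi_M$ gives $\theta_{n+1}(\llbracket P,f\rrbracket)=\{\!\!\{\pi_M,\theta_n(f)\}\!\!\}$ for every $f\in C^n_{\{P_x\}}(M,A)$. Since $\delta_{\{P_x\}}(f)=(-1)^n\llbracket P,f\rrbracket$, while $\theta_n(f)\in C^{n+1}_{\mathrm{mDA}}(M,\{\prec_x,\succ_x\}_{x\in X})$, so that $\delta_{\mathrm{mDA}}(\theta_n(f))=(-1)^{n+2}\{\!\!\{\pi_M,\theta_n(f)\}\!\!\}=(-1)^n\{\!\!\{\pi_M,\theta_n(f)\}\!\!\}$, we obtain $\theta_{n+1}\circ\delta_{\{P_x\}}=\delta_{\mathrm{mDA}}\circ\theta_n$. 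Hence $\{\theta_n\}_{n\ge 1}$ is a morphism of cochain complexes from $\big(C^\bullet_{\{P_x\}}(M,A),\delta_{\{P_x\}}\big)$ to $\big(C^{\bullet+1}_{\mathrm{mDA}}(M,\{\prec_x,\succ_x\}_{x\in X}),\delta_{\mathrm{mDA}}\big)$, and it therefore induces the asserted morphism $\theta_\bullet\colon H^\bullet_{\{P_x\}}(M,A)\to H^{\bullet+1}_{\mathrm{mDA}}(M,\{\prec_x,\succ_x\}_{x\in X})$ on cohomology.
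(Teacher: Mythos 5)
Your proposal is correct and follows essentially the same route as the paper: a direct componentwise verification that $\theta_{m+n}\llbracket f,g\rrbracket=\{\!\!\{\theta_m(f),\theta_n(g)\}\!\!\}$ (with the nonzero contributions concentrated in the decorations $[1]$ and $[m+n+1]$ and the intermediate ones cancelling via the bimodule axioms), followed by the observation $\theta_1(P)=\pi_M$ and the same sign chase $\theta_{n+1}\circ\delta_{\{P_x\}}=\delta_{\mathrm{mDA}}\circ\theta_n$ for part (ii). The only difference is that the paper writes out the $[1]$- and $[m+n+1]$-expansions explicitly, whereas you describe them as a plan; the cancellation mechanism you identify for $2\le r\le m+n$ is exactly what happens.
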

\begin{proof}
(i) Let $f \in C^m_{\{P_x\}}(M,A)$ and $g \in C^n_{\{P_x\}}(M,A)$. Then by a direct calculation, we have
\begin{align*}
    &\big(  \{ \! \! \{ \theta_m (f) , \theta_n (g)  \} \! \! \} \big)^{[1]}_{x_1, \ldots, x_{m+n+1}} (u_1, \ldots, u_{m+n+1} ) \\
    &= \big(  \sum_{i=1}^{m+1} (-1)^{(i-1)n}~ \theta_m(f) \circ_i \theta_n (g) -(-1)^{mn} \sum_{i=1}^{n+1} (-1)^{(i-1)m}~ \theta_n(g) \circ_i \theta_m (f)   \big)^{[1]}_{x_1, \ldots, x_{m+n+1}} (u_1, \ldots, u_{m+n+1} )   \\
    &= (\theta_m (f))^{[1]}_{x_1, x_{n+2}, \ldots, x_{m+n+1}} \big( (\theta_n (g))^{[1]}_{x_1, \ldots, x_{n+1}} (u_1, \ldots, u_{n+1}), u_{n+2}, \ldots, u_{m+n+1}  \big) ~+ \\
    & \sum_{i=1}^m (-1)^{in} \sum_{s=1}^{n+1} (\theta_m (f))^{[1]}_{x_1, \ldots, x_i, x_{i+s}, x_{i+n+2}, \ldots, x_{m+n+1}} \big( u_1, \ldots, u_i, (\theta_n (g))^{[s]}_{x_{i+1}, \ldots, x_{i+n+1}} (u_{i+1}, \ldots, u_{i+n+1}), \ldots \big)\\
   & - (-1)^{mn} \bigg\{    
    (\theta_n (g))^{[1]}_{x_1, x_{m+2}, \ldots, x_{m+n+1}} \big( (\theta_m (f))^{[1]}_{x_1, \ldots, x_{m+1}} (u_1, \ldots, u_{m+1}), u_{m+2}, \ldots, u_{m+n+1}  \big) ~+ \\
    & \small{ \sum_{i=1}^n (-1)^{im} \sum_{s=1}^{m+1} (\theta_n (g))^{[1]}_{x_1, \ldots, x_i, x_{i+s}, x_{i+m+2}, \ldots, x_{m+n+1}} \big( u_1, \ldots, u_i, (\theta_m (f))^{[s]}_{x_{i+1}, \ldots, x_{i+m+1}} (u_{i+1}, \ldots, u_{i+m+1}), \ldots \big)}
    \bigg\}\\
   & = (-1)^{m+n+1} u_1 \cdot_M \big( \llbracket f, g \rrbracket_{x_2, \ldots, x_{m+n+1}} (u_2, \ldots, u_{m+n+1})  \big)  = \big( \theta_{m+n} \llbracket f, g \rrbracket  \big)^{[1]}_{x_1, \ldots, x_{m+n+1}} (u_1, \ldots, u_{m+n+1}).
\end{align*}
On the other hand, if $2 \leq r \leq m+n$, by a simple observation we get that
\begin{align*}
    \big(  \{ \! \! \{ \theta_m (f) , \theta_n (g)  \} \! \! \} \big)^{[r]}_{x_1, \ldots, x_{m+n+1}} (u_1, \ldots, u_{m+n+1} ) = \big( \theta_{m+n} \llbracket f, g \rrbracket  \big)^{[r]}_{x_1, \ldots, x_{m+n+1}} (u_1, \ldots, u_{m+n+1}) = 0.
\end{align*}
Finally,
\begin{align*}
   & \big(  \{ \! \! \{ \theta_m (f) , \theta_n (g)  \} \! \! \} \big)^{[m+n+1]}_{x_1, \ldots, x_{m+n+1}} (u_1, \ldots, u_{m+n+1} ) \\
    &= \big(  \sum_{i=1}^{m+1} (-1)^{(i-1)n}~ \theta_m(f) \circ_i \theta_n (g) -(-1)^{mn} \sum_{i=1}^{n+1} (-1)^{(i-1)m}~ \theta_n(g) \circ_i \theta_m (f)   \big)^{[m+n+1]}_{x_1, \ldots, x_{m+n+1}} (u_1, \ldots, u_{m+n+1} )   \\
   & = \sum_{i=1}^m (-1)^{(i-1)n} \sum_{i=1}^{n+1} (\theta_m (f))^{[m+1]}_{x_1, \ldots, x_{i-1}, x_{i-1+s}, x_{i+n+1}, \ldots, x_{m+n+1}} \big(  u_1, \ldots, u_{i-1}, (\theta_n (g))^{[s]}_{x_i, \ldots, x_{i+n}} (u_i, \ldots, u_{i+n} ), \ldots \big) \\
   &~ + (-1)^{mn} (\theta_m (f))^{[m+1]}_{x_1, \ldots, x_m, x_{m+n+1}} \big( u_1, \ldots, u_m , (\theta_n (g))^{[n+1]}_{x_{m+1}, \ldots, x_{m+n+1}} ( u_{m+1}, \ldots, u_{m+n+1}  ) \big) - (-1)^{mn}   \\
   &~ \bigg\{  \sum_{i=1}^n (-1)^{(i-1)m} \sum_{i=1}^{m+1} (\theta_n (g))^{[n+1]}_{x_1, \ldots, x_{i-1}, x_{i-1+s}, x_{i+m+1}, \ldots, x_{m+n+1}} \big(  u_1, \ldots, u_{i-1}, (\theta_m (f))^{[s]}_{x_i, \ldots, x_{i+m}} (u_i, \ldots, u_{i+m} ), \ldots   \big) \\
  &~ + (-1)^{mn} (\theta_n (g))^{[n+1]}_{x_1, \ldots, x_n, x_{m+n+1}} \big( u_1, \ldots, u_n , (\theta_m (g))^{[m+1]}_{x_{n+1}, \ldots, x_{m+n+1}} ( u_{n+1}, \ldots, u_{m+n+1}  ) \big)
    \bigg\} \\
  &  = \big(  \llbracket f, g \rrbracket_{x_1, \ldots, x_{m+n}} (u_1, \ldots, u_{m+n}) \big) \cdot_M u_{m+n+1} = \big( \theta_{m+n}  \llbracket f, g \rrbracket  \big)^{[m+n+1]}_{x_1, \ldots, x_{m+n+1}} (u_1, \ldots, u_{m+n+1}).
\end{align*}
This completes the proof.

\medskip

(ii) Let $P = \oplus_{x \in X} P_x$ be the Maurer-Cartan element in the graded Lie algebra $(\oplus_{n \geq 0} C^n_{ \{ P_x \} } (M,A), \llbracket ~, ~\rrbracket).$ Then we have
\begin{align*}
    (\theta_1 (P))^{[1]}_{x, y} (u,v) = u \cdot_M P_y (v) ~~~ \text{ and } ~~~ (\theta_1 (P))^{[2]}_{x, y} (u,v) = P_x (u) \cdot_M v, \text{ for } u, v \in M \text{ and } x, y \in X. 
\end{align*}
This shows that $\theta_1 (P) \in \mathcal{O}_M (2)$ is the multiplication in the operad $( \{ \mathcal{O}_M (k) \}_{k=1}^\infty , \circ_i ) $ corresponding to the induced matching dendriform algebra $(M, \{ \prec_x, \succ_x \}_{x \in X})$. Hence for any $f \in C^n_{ \{ P_x \} } (M,A)$, we have
\begin{align*}
    (\theta_{n+1} \circ \delta_{ \{ P_x \} }) (f) = (-1)^n ~ \theta_{n+1} \llbracket P, f \rrbracket = (-1)^n \{ \! \! \{ \theta_1 (P), \theta_n (f) \} \! \! \} = \delta_\mathrm{mDA}(\theta_n (f) ) = (\delta_\mathrm{mDA} \circ \theta_n)(f).
\end{align*}
As a consequence, we get the desired result.
\end{proof}

In the following, we will consider a morphism from the cohomology of a matching relative Rota-Baxter algebra to the cohomology of the induced matching dendriform algebra. Note that the collection $\{\theta_n\}_{n=1}^{\infty}$ induces a collection of maps
\begin{align*}
C^n_{\mathrm{mrRBA}}(A,M,\{P_x\}_{x\in X}) &\rightarrow  C^{n}_{\mathrm{mDA}}(M,\{\prec_x,\succ_x\}_{x\in X}), \text{ for } n\geq 2\\
(\alpha,\beta,\gamma) &\mapsto \theta_{n-1}(\gamma).
\end{align*}
This collection also commutes with the corresponding differentials. Hence we obtain a morphism of cochain complexes from $\big\{ C^{\bullet}_{\mathrm{mrRBA}}(A,M,\{P_x\}_{x\in X}),\delta_{\mathrm{mrRBA}} \big\}$
 to $\big\{ C^{\bullet}_{\mathrm{mDA}} (M, \{ \prec_x, \succ_x \}_{x \in X} ),\delta_{\mathrm{mDA}} \big\}$. As a consequence, we obtain a morphism $H^{\bullet}_{\mathrm{mrRBA}}(A,M,\{P_x\}_{x\in X})\rightarrow H^{\bullet}_{\mathrm{mDA}}(M,\{\prec_x,\succ_x\}_{x\in X})$ between the corresponding cohomologies.

\subsection{Homotopy versions of matching dendriform algebras and matching relative Rota-Baxter algebras} In \cite{lod-val-book} Loday and Vallette considered a homotopy version of dendriform algebras (also called homotopy dendriform algebras or Dend$_\infty$-algebras) in their study of algebraic structures over operads. Recently, it has been explicitly studied in \cite{das-dend}. Like dendriform algebras are splitting of associative algebras, the concept of homotopy dendriform algebras are splitting of homotopy associative algebras or $A_\infty$-algebras. Our aim of the present subsection is to study the homotopy version of matching dendriform algebras.

Let $D$ be a graded vector space. Similar to the ungraded case, we can define the space $\mathcal{O}_D (n)$ associated to the graded vector space $D$. More precisely, for any $n \geq 1$, we have
\begin{align*}
  \mathcal{O}_D  (k) = \{ f = \oplus_{x_1, \ldots, x_k \in X} f_{x_1, \ldots, x_k} ~|~ & f_{x_1, \ldots, x_k} \in \mathrm{Hom}(\mathbb{K}[C_k] \otimes D^{\otimes k}, D) \text{ and } \\
  & f^{[r]}_{x_1, \ldots, x_k} \text{ doesn't depend on } x_r,~ \text{for} 1 \leq r \leq k  \}.
\end{align*}

\begin{defn}
A {\bf homotopy matching dendriform algebra} is a pair $(D, \{ \pi_k \}_{k \geq 1})$ consisting of a graded $\mathbb{K}$-module $D = \oplus_{i \in \mathbb{Z}} D^i$ equipped with a collection $\{ \pi_k \}_{k \geq 1}$ of elements $\pi_k \in \mathcal{O}_k (D)$ with $\mathrm{deg} \big(  (\pi_k)^{[r]}_{x_1, \ldots, x_k} \big) = k-2$ (for all $k \geq 1$, $[r] \in C_k$ and $x_1, \ldots, x_k \in X$) such that for all homogeneous elements $a_1, \ldots, a_n \in D$, elements $x_1, \ldots, x_n \in X$ and $[r] \in C_n$,
\begin{align}\label{mda-h}
\sum_{k+l=n+1} \sum_{i=1}^k (-1)^{i(l+1) + l (|a_1| + \cdots + |a_{i-1}|)}~ (\pi_k \circ_i \pi_l)^{[r]}_{x_1, \ldots, x_n} (a_1, \ldots, a_n) = 0,
\end{align}
where the terms  $(\pi_k \circ_i \pi_l)^{[r]}_{x_1, \ldots, x_n} (a_1, \ldots, a_n) $ are defined in (\ref{circ-dend-match}).
\end{defn}

\begin{remark} (i) A homotopy dendriform algebra \cite{lod-val-book,das-dend} can be regarded as a special case of a homotopy matching dendriform algebra in which the maps $(\pi_k)^{[r]}_{x_1, \ldots, x_k}$ (for all $k \geq 1$ and $[r] \in C_k$) doesn't depend on the elements $x_1, \ldots, x_k$.

(ii) Any matching dendriform algebra $(D, \{ \prec_x, \succ_x \}_{x \in X})$ can be thought of as a homotopy matching dendriform algebra $(D, \{ \pi_k \}_{k \geq 1})$ concentrated in degree $0$, where
\begin{align*}
\pi_k = 0 \text{ for } k \neq 2, \quad (\pi_2)^{[1]}_{x,y} (a,b) = a \prec_y b ~~ \text{ and } ~~ (\pi_2)^{[2]}_{x,y} (a,b) = a \succ_x b, \text{ for } x, y \in X \text{ and } a, b \in A.
\end{align*}
\end{remark}

In section \ref{sec3}, we have seen that a matching relative Rota-Baxter algebra induces a matching dendriform algebra structure (see the functor $\mathscr{F}$). In the following, we will generalize this result in the homotopy context. We first recall the notion of homotopy associative algebras and bimodules over them \cite{stasheff,keller}.

\begin{defn}
(i) An {\bf $A_\infty$-algebra} is a pair $(A, \{ \mu_k \}_{k \geq 1})$ consisting of a graded $\mathbb{K}$-module $A = \oplus_{i \in \mathbb{Z}} A^i$ equipped with a collection $\{ \mu_k : A^{\otimes k} \rightarrow A \}_{k \geq 1}$ of graded linear maps with $\mathrm{deg} (\mu_k) = k-2$ (for all $k \geq 1$) satisfying for all homogeneous elements $a_1, \ldots, a_{n} \in A$,
\begin{align}\label{a-inf-identities}
   \sum_{k+l = n+1} \sum_{i=1}^k (-1)^{i(l+1) + l (|a_1|+ \cdots + |a_{i-1}|)} \mu_k \big(  a_1, \ldots, a_{i-1}, \mu_l (a_i, \ldots, a_{i+l-1}), a_{i+l}, \ldots, a_n \big) = 0.
\end{align}

\medskip

(ii) Let $(A, \{ \mu_k \}_{k \geq 1})$ be an $A_\infty$-algebra. A {\bf representation} of this $A_\infty$-algebra consists of a pair $(M, \{ \eta_k \}_{k \geq 1})$ of a graded $\mathbb{K}$-module $M = \oplus_{i \in \mathbb{Z}} M^i$ and a collection $\{ \eta_k : \mathcal{A}^{k-1, 1} \rightarrow M \}_{k \geq 1}$ of graded linear maps with $\mathrm{deg} (\eta_k) = k-2$ (for all $k \geq 1$) satisfying the identities (\ref{a-inf-identities}) with exactly one of the variables $a_1, \ldots, a_n$ comes from $M$ and the corresponding linear operation $\mu_k$ or $\mu_l$ replaced by $\eta_k$ or $\eta_l$. Like the ungraded case, here $\mathcal{A}^{k-1,1}$ denotes the direct sum of all possible tensor powers of $A$ and $M$ in which $A$ appears $k-1$ times and $M$ appears exactly once.
\end{defn}

\begin{defn}
A {\bf homotopy matching relative Rota-Baxter algebra} is a triple $(A, M, \{ P_x \}_{x \in X})$ in which $(A, \{ \mu_k \}_{k \geq 1})$ is an $A_\infty$-algebra, $(M, \{ \eta_k \}_{k \geq 1})$ is a bimodule and $\{ P_x : M \rightarrow A \}_{x \in X}$ is a collection of degree $0$ linear maps satisfying
\begin{align}\label{homotopy-matching-rota-iden}
    \mu_k \big(  P_{x_1} (u_1), \ldots, P_{x_k} (u_k) \big) = \sum_{r=1}^k P_{x_r} \big(  \eta_k \big( P_{x_1}(u_1), \ldots, u_r, \ldots, P_{x_k} (u_k) \big) \big),
\end{align}
for all $u_1, \ldots, u_k \in M$, $x_1, \ldots, x_k \in X$ and $k \geq 1$.
\end{defn}

\begin{prop}
Let $(A, M, \{ P_x \}_{x \in X})$ be a homotopy matching relative Rota-Baxter algebra. Then $(M, \{ \pi_k \}_{k \geq 1})$ is a homotopy matching dendriform algebra, where
\begin{align*}
    (\pi_k)_{x_1, \ldots, x_k}^{[r]} (u_1, \ldots, u_k) = \eta_k \big( P_{x_1} (u_1), \ldots, u_r, \ldots, P_{x_k} (u_k)  \big),
\end{align*}
 for  $u_i \in M, x_i \in X ~(i=1, \ldots, k)$  and  $[r] \in C_k.$
\end{prop}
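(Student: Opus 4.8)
The plan is to verify directly that the candidate data $(M,\{\pi_k\}_{k\geq 1})$ satisfies the defining identities \eqref{mda-h} of a homotopy matching dendriform algebra. The two preliminary checks are immediate: for each $k\geq 1$, $[r]\in C_k$ and $x_1,\ldots,x_k\in X$ the map $(\pi_k)^{[r]}_{x_1,\ldots,x_k}$ is $\eta_k$ precomposed with $P_{x_j}$ in every slot except the $r$-th; since $\deg P_{x_j}=0$ and $\deg\eta_k=k-2$, this composite has degree $k-2$, and it plainly does not involve $x_r$. Hence each $\pi_k$ lies in $\mathcal{O}_k(M)$ with the required degree, so only \eqref{mda-h} has to be established.

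For this I would fix $n$, $[r]\in C_n$, homogeneous $u_1,\ldots,u_n\in M$ and $x_1,\ldots,x_n\in X$, and expand every term $(\pi_k\circ_i\pi_l)^{[r]}_{x_1,\ldots,x_n}(u_1,\ldots,u_n)$ of \eqref{mda-h} by substituting \eqref{circ-dend-match} and the definitions of $\pi_k,\pi_l$ in terms of the $\eta$'s and the $P_x$'s. This produces three cases according to the position of $[r]$ relative to the inner block $\{i,\ldots,i+l-1\}$. When $i\leq r\leq i+l-1$, \eqref{circ-dend-match} gives a single term, which after substitution is exactly
\[
\eta_k\big(P_{x_1}(u_1),\ldots,P_{x_{i-1}}(u_{i-1}),~\eta_l\big(P_{x_i}(u_i),\ldots,u_r,\ldots,P_{x_{i+l-1}}(u_{i+l-1})\big),~P_{x_{i+l}}(u_{i+l}),\ldots,P_{x_n}(u_n)\big),
\]
i.e. the ``outer $\eta_k$, inner $\eta_l$'' term of the representation $A_\infty$-identity. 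When $r<i$ or $r>i+l-1$, \eqref{circ-dend-match} gives a sum over $s=1,\ldots,l$ whose generic summand carries the factor $P_{x_{i-1+s}}\big(\eta_l(P_{x_i}(u_i),\ldots,u_{i-1+s},\ldots,P_{x_{i+l-1}}(u_{i+l-1}))\big)$; summing over $s$ and applying the homotopy matching relative Rota-Baxter identity \eqref{homotopy-matching-rota-iden} collapses this factor to $\mu_l\big(P_{x_i}(u_i),\ldots,P_{x_{i+l-1}}(u_{i+l-1})\big)$, so the whole term becomes $\eta_k$ applied to the string $(P_{x_1}(u_1),\ldots,u_r,\ldots,P_{x_n}(u_n))$ with the entries in positions $i,\ldots,i+l-1$ replaced by that single $A$-element --- i.e. the ``outer $\eta_k$, inner $\mu_l$'' term of the representation identity.

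Assembling the branches, the left-hand side of \eqref{mda-h} at the fixed $[r]$ becomes, summand by summand, the left-hand side of the representation $A_\infty$-identity \eqref{a-inf-identities} evaluated on the element $(P_{x_1}(u_1),\ldots,u_r,\ldots,P_{x_n}(u_n))\in\mathcal{A}^{n-1,1}$, in which $u_r$ is the unique entry from $M$. The Koszul signs match term by term, since $\deg P_{x_j}=0$ gives $|P_{x_j}(u_j)|=|u_j|$ and hence the common sign $(-1)^{i(l+1)+l(|u_1|+\cdots+|u_{i-1}|)}$ in \eqref{mda-h} and \eqref{a-inf-identities}. Because $(M,\{\eta_k\}_{k\geq 1})$ is a representation of the $A_\infty$-algebra $(A,\{\mu_k\}_{k\geq 1})$, that sum vanishes, which is precisely \eqref{mda-h}; hence $(M,\{\pi_k\}_{k\geq 1})$ is a homotopy matching dendriform algebra. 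I expect the main obstacle to be the index-and-sign bookkeeping in the case analysis: one must check that the three branches of \eqref{circ-dend-match} reassemble into the full representation identity with no term omitted or counted twice, and in particular that the collapse via \eqref{homotopy-matching-rota-iden} yields the inner $\mu_l$-terms carrying exactly the labels $x_i,\ldots,x_{i+l-1}$ and occupying the correct slots inside $\eta_k$.
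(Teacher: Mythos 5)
Your proposal is correct and follows essentially the same route as the paper's own proof: the same three-way case analysis on the position of $[r]$ relative to the inner block $\{i,\ldots,i+l-1\}$, the same collapse of the sum over $s$ via the identity (\ref{homotopy-matching-rota-iden}) into an inner $\mu_l$, and the same conclusion from the representation $A_\infty$-identity applied to $(P_{x_1}(u_1),\ldots,u_r,\ldots,P_{x_n}(u_n))$. The only addition is your explicit check of degrees and of independence from $x_r$, which the paper leaves implicit.
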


\begin{proof}
It follows from (\ref{homotopy-matching-rota-iden}) that
\begin{align*}
    \mu_k \big(  P_{x_1} (u_1), \ldots, P_{x_k} (u_k) \big) = \sum_{r=1}^k P_{x_r} \big( (\pi_k)^{[r]}_{x_1, \ldots, x_k} (u_1, \ldots, u_k)   \big).
\end{align*}
We observe that for any $k, l \geq 1$ with $k+l-1 = n$, and $1 \leq i \leq k$, we have either
\begin{align*}
    1 \leq r \leq i-1 \quad \text{ or } \quad i \leq r \leq i+l-1 \quad \text{ or } \quad i+l \leq r \leq k+l-1.
\end{align*}
If $1 \leq r \leq i-1$, then we have
\begin{align*}
    &(\pi_k \circ \pi_l)^{[r]}_{x_1, \ldots, x_n} (u_1, \ldots, u_n) \\
    &= \sum_{s=1}^l (\pi_k)^{[r]}_{x_1, \ldots, x_{i-1}, x_{i-1+s}, x_{i+l}, \ldots, x_n} \big( u_1, \ldots, u_{i-1}, (\pi_l)^{[s]}_{x_i, \ldots, x_{i+l-1}} (u_i, \ldots, u_{i+l-1}), u_{i+l}, \ldots, u_n  \big) \\
    %&~~+ (\pi_k)^{[r]}_{x_1, \ldots, x_{i-1}, x_{i+1}, x_{i+l}, \ldots, x_n} \big( u_1, \ldots, u_{i-1}, (\pi_l)^{[2]}_{x_i, \ldots, x_{i+l-1}} (u_i, \ldots, u_{i+l-1}), u_{i+l}, \ldots, u_n  \big) \\
    %&\quad\vdots \\
    %&~~~+ (\pi_k)^{[r]}_{x_1, \ldots, x_{i-1}, x_{i+l-1}, x_{i+l}, \ldots, x_n} \big( u_1, \ldots, u_{i-1}, (\pi_l)^{[l]}_{x_i, \ldots, x_{i+l-1}} (u_i, \ldots, u_{i+l-1}), u_{i+l}, \ldots, u_n  \big) \\
    &= \eta_k \big(  P_{x_1} (u_1), \ldots, u_r, \ldots, P_{x_{i-1}} (u_{i-1}), \sum_{s=1}^l P_{x_{i-1+s}} ( (\pi_l)^{[s]}_{x_i, \ldots, x_{i+l-1}} (u_i, \ldots, u_{i+l-1})), \ldots, P_{x_n} (u_n) \big) \\
    &= \eta_k \big(  P_{x_1} (u_1), \ldots, u_r, \ldots, P_{x_{i-1}} (u_{i-1}), \mu_l \big(  P_{x_i} (u_i), \ldots, P_{i+l-1} (u_{i+l-1}) \big), \ldots, P_{x_n} (u_n)  \big).
\end{align*}
Similarly, if $i \leq r \leq i+l-1$, we get the term as
\begin{align*}
 &(\pi_k \circ \pi_l)^{[r]}_{x_1, \ldots, x_n} (u_1, \ldots, u_n) \\
 &= (\pi_k)^{[i]}_{x_1, \ldots, x_{i-1}, x_r, x_{i+l}, \ldots, x_n} \big( u_1, \ldots, u_{i-1}, (\pi_l)^{[r-i+1]}_{x_1, \ldots, x_{i+l-1}} (u_i, \ldots, u_{i+l-1}), \ldots, P_{x_n} (u_n)    \big)  \\
 &= \eta_k \big( P_{x_1} (u_1), \ldots, P_{x_{i-1}} (u_{i-1}), (\pi_l)_{x_i, \ldots, x_{i+l-1}}^{[r-i+1]} (u_i, \ldots, u_{i+l-1}), \ldots, P_{x_n} (u_n)   \big) \\
 &= \eta_k \big(  P_{x_1}(u_1), \ldots,  P_{x_{i-1}} (u_{i-1}), \eta_l \big(  P_{x_i} (u_i), \ldots, u_r, \ldots, P_{x_{i+l-1}} (u_{i+l-1}) \big), \ldots, P_{x_n} (u_n)  \big).
 \end{align*}
 Finally, if $i +l \leq r \leq k+l-1$, we get
 \begin{align*}
 &(\pi_k \circ \pi_l)^{[r]}_{x_1, \ldots, x_n} (u_1, \ldots, u_n) \\
 &= \sum_{s=1}^l (\pi_k)^{[r-l+1]}_{x_1, \ldots, x_{i-1}, x_{i-1+s}, x_{i+l}, \ldots, x_n} \big( u_1, \ldots, u_{i-1}, (\pi_l)^{[s]}_{x_i, \ldots, x_{i+l-1}} (u_i, \ldots, u_{i+l-1}), u_{i+l}, \ldots, u_n  \big) \\
   % &~~~+ (\pi_k)^{[r-l+1]}_{x_1, \ldots, x_{i-1}, x_{i+1}, x_{i+l}, \ldots, x_n} \big( u_1, \ldots, u_{i-1}, (\pi_l)^{[2]}_{x_i, \ldots, x_{i+l-1}} (u_i, \ldots, u_{i+l-1}), u_{i+l}, \ldots, u_n  \big) \\
   % &\quad \vdots \\
  %  &~~~+ (\pi_k)^{[r-l+1]}_{x_1, \ldots, x_{i-1}, x_{i+l-1}, x_{i+l}, \ldots, x_n} \big( u_1, \ldots, u_{i-1}, (\pi_l)^{[l]}_{x_i, \ldots, x_{i+l-1}} (u_i, \ldots, u_{i+l-1}), u_{i+l}, \ldots, u_n  \big) \\
    &= \eta_k \big(  P_{x_1} (u_1), \ldots, P_{x_{i-1}} (u_{i-1}), \sum_{s=1}^l P_{x_{i-1+s}} ( (\pi_l)^{[s]}_{x_i, \ldots, x_{i+l-1}} (u_i, \ldots, u_{i+l-1})), \ldots, u_r \ldots, P_{x_n} (u_n) \big) \\
    &= \eta_k \big(  P_{x_1} (u_1),  \ldots, P_{x_{i-1}} (u_{i-1}), \mu_l \big(  P_{x_i} (u_i), \ldots, P_{i+l-1} (u_{i+l-1}) \big),  P_{x_{i+l}} (u_{i+l}), \ldots, u_r, \ldots, P_{x_n} (u_n)  \big).
 \end{align*}
 Therefore, the identities (\ref{mda-h}) follow as $(M, \{ \eta_k \}_{k \geq 1})$ is a representation of the $A_\infty$-algebra $(A, \{ \mu_k \}_{k \geq 1})$. This completes the proof.
\end{proof}

In section \ref{sec3}, we also shown that a matching dendriform algebra gives rise to a matching relative Rota-Baxter algebra such that the induced matching dendriform algebra coincides with the given one (see the functor $\mathscr{G}$). This result also has a generalization in the homotopy context.

\begin{thm}
Let $(D, \{ \pi_k \}_{k \geq 1})$ be a homotopy matching dendriform algebra. Then the followings are hold.

(i) The pair $(D \otimes \mathbb{K}[X], \{ \mu_k \}_{k \geq 1})$ is an $A_\infty$-algebra, where
\begin{align}\label{define-a-inf-mul}
    \mu_k \big(  a_1 \otimes x_1, \ldots, a_k \otimes x_k \big) := \sum_{r=1}^k \big(  (\pi_k)^{[r]}_{x_1, \ldots, x_k} (a_1, \ldots, a_k)   \big) \otimes x_r, \text{ for } a_i \otimes x_i \in D \otimes \mathbb{K}[X].
\end{align}

(ii) The graded vector space $D$ can be given a bimodule structure over the $A_\infty$-algebra $(D \otimes \mathbb{K}[X], \{ \mu_k \}_{k \geq 1})$ with the action maps $\{ \eta_k \}_{k \geq 1}$ are given by
\begin{align}\label{define-action-mul}
    \eta_k \big( a_1 \otimes x_1, \ldots, a_r, \ldots, a_k \otimes x_k   \big) := (\pi_k)^{[r]}_{x_1, \ldots, x_k} (a_1, \ldots, a_k),
\end{align}
for $a_i \otimes x_i \in D \otimes \mathbb{K}[X]$ $(i=1, \ldots, r-1, r+1, \ldots, k)$ and $a_r \in D$. On the right hand side of the above defining identity, we consider $x_r$ to be any element of $X$.

(iii) The triple $\big( (D \otimes \mathbb{K}[X], \{ \mu_k \}_{k \geq 1}), (D, \{ \eta_k \}_{k \geq 1}) , \{ P_x \}_{x \in X}  \big)$ is a homotopy matching relative Rota-Baxter algebra, where $\{ P_x : D \rightarrow D \otimes \mathbb{K}[X] \}_{x \in X}$ is the collection of degree $0$ linear maps given by
\begin{align*}
    P_x (a) := a \otimes x, \text{ for } a \in D \text{ and } x \in X.
\end{align*}
Moreover, the induced homotopy matching dendriform algebra structure on $D$ coincides with the given one.
\end{thm}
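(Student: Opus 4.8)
The plan is to deduce all three parts from a single morphism of nonsymmetric operads. Write $V := D \otimes \mathbb{K}[X]$ and $E := V \oplus D$, both graded with $\mathbb{K}[X]$ concentrated in degree $0$, and let $\mathrm{End}_E$ denote the endomorphism operad, $\mathrm{End}_E(k) = \mathrm{Hom}(E^{\otimes k}, E)$. For each $k \geq 1$ define a degree-preserving map $\widehat{\Phi} : \mathcal{O}_D(k) \to \mathrm{End}_E(k)$ by prescribing, for $f \in \mathcal{O}_D(k)$: on a $k$-tuple whose entries all lie in $V$, say $v_j = a_j \otimes x_j$,
\[
\widehat{\Phi}(f)(v_1, \ldots, v_k) := \sum_{r=1}^{k} f^{[r]}_{x_1, \ldots, x_k}(a_1, \ldots, a_k) \otimes x_r \in V ;
\]
on a $k$-tuple with exactly one entry lying in $D$, occurring in position $r$,
\[
\widehat{\Phi}(f)\big(a_1 \otimes x_1, \ldots, a_{r-1}\otimes x_{r-1}, a_r, a_{r+1}\otimes x_{r+1}, \ldots, a_k \otimes x_k\big) := f^{[r]}_{x_1, \ldots, x_k}(a_1, \ldots, a_k) \in D ,
\]
which is well defined precisely because $f^{[r]}_{x_1,\ldots,x_k}$ is independent of $x_r$; and let $\widehat{\Phi}(f)$ vanish on every tuple with two or more entries in $D$. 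Extend both clauses by multilinearity. Since entries of $\mathbb{K}[X]$ are in degree $0$, $|a_j \otimes x_j| = |a_j|$, so $\widehat{\Phi}(\pi_k)$ has degree $k-2$ whenever each $(\pi_k)^{[r]}_{x_1,\ldots,x_k}$ does.

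The crucial step, which I expect to carry essentially all of the technical difficulty, is to verify that $\widehat{\Phi}$ commutes with partial compositions, $\widehat{\Phi}(f \circ_i g) = \widehat{\Phi}(f) \circ_i \widehat{\Phi}(g)$ for $f \in \mathcal{O}_D(k)$, $g \in \mathcal{O}_D(l)$, $1 \leq i \leq k$ (that $\widehat{\Phi}$ carries the unit $\mathrm{id}_D \in \mathcal{O}_D(1)$ to $\mathrm{id}_E$ is clear). This is a case analysis governed by the three-branch formula (\ref{circ-dend-match}). When all entries lie in $V$, one matches each branch of (\ref{circ-dend-match}) for $f \circ_i g$ against the composition in $\mathrm{End}_E$, keeping track of which $\mathbb{K}[X]$-label each output carries; the sum $\sum_{s=1}^{l}$ in (\ref{circ-dend-match}) is exactly the decomposition of $\widehat{\Phi}(g)$ into its $l$ output-labels. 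When exactly one entry lies in $D$, one distinguishes whether that entry is among the $l$ arguments fed to $g$ (so $\widehat{\Phi}(g)$ is $D$-valued and is inserted into slot $i$ of $\widehat{\Phi}(f)$) or lies before, or after, that block (so $\widehat{\Phi}(g)$ is $V$-valued and the distinguished entry itself reaches $\widehat{\Phi}(f)$); in each branch agreement with (\ref{circ-dend-match}) is read off, again using independence of $f^{[r]}_{x_1,\ldots,x_k}$ from $x_r$. When two or more entries lie in $D$, both sides vanish. No signs enter this step; it is the shape of (\ref{circ-dend-match}) alone that is in play.

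Granting the operad-morphism identity, parts (i) and (ii) are formal. Fix $n$ and homogeneous entries $a_j \otimes x_j \in V$. The morphism property gives, with $\mu_k := \widehat{\Phi}(\pi_k)|_{V^{\otimes k}}$,
\[
\mu_k\big(a_1 \otimes x_1, \ldots, \mu_l(a_i \otimes x_i, \ldots, a_{i+l-1}\otimes x_{i+l-1}), \ldots, a_n \otimes x_n\big) = \sum_{r=1}^{n} (\pi_k \circ_i \pi_l)^{[r]}_{x_1, \ldots, x_n}(a_1, \ldots, a_n) \otimes x_r .
\]
Multiplying by $(-1)^{i(l+1) + l(|a_1| + \cdots + |a_{i-1}|)}$ — the very sign occurring in both (\ref{mda-h}) and (\ref{a-inf-identities}), since $|a_j \otimes x_j| = |a_j|$ — and summing over $k + l = n + 1$ and $1 \leq i \leq k$, the right-hand side vanishes by (\ref{mda-h}) applied for each $[r]$; hence (\ref{a-inf-identities}) holds for $\{\mu_k\}$ on simple tensors, so on all of $V$, and these $\mu_k$ are precisely the maps (\ref{define-a-inf-mul}). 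The same computation on a tuple with exactly one entry from $D$ converts (\ref{mda-h}) into the defining identities of an $A_\infty$-bimodule for $\eta_k := \widehat{\Phi}(\pi_k)|_{\mathcal{V}^{k-1,1}}$ — the maps (\ref{define-action-mul}) — over the $A_\infty$-algebra $(V, \{\mu_k\})$; equivalently, $\{\widehat{\Phi}(\pi_k)\}_{k \geq 1}$ is an $A_\infty$-structure on $E = V \oplus D$ in which $D$ is a square-zero ideal, whence $D$ is an $A_\infty$-bimodule over $V$ with action maps $\eta_k$. This settles (i) and (ii).

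Part (iii) requires no further calculation. The maps $P_x(a) := a \otimes x$ have degree $0$, and the defining relation (\ref{homotopy-matching-rota-iden}) holds tautologically: both sides equal $\sum_{r=1}^{k}\big((\pi_k)^{[r]}_{x_1,\ldots,x_k}(u_1,\ldots,u_k)\big)\otimes x_r$, the left by (\ref{define-a-inf-mul}) and the right because $P_{x_r}\big(\eta_k(P_{x_1}(u_1),\ldots,u_r,\ldots,P_{x_k}(u_k))\big) = \big((\pi_k)^{[r]}_{x_1,\ldots,x_k}(u_1,\ldots,u_k)\big)\otimes x_r$ by (\ref{define-action-mul}). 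Finally, applying the construction of the preceding Proposition to this homotopy matching relative Rota-Baxter algebra produces a homotopy matching dendriform structure on $D$ whose $[r]$-component is $\eta_k(P_{x_1}(u_1),\ldots,u_r,\ldots,P_{x_k}(u_k)) = (\pi_k)^{[r]}_{x_1,\ldots,x_k}(u_1,\ldots,u_k)$, namely the original one. Thus the single nontrivial ingredient is the operad-morphism identity for $\widehat{\Phi}$; the rest is an application of $\widehat{\Phi}$ together with an unwinding of definitions.
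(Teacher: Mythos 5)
Your proof is correct, and it is organized differently from the paper's. The paper proves (i) by first checking that $(D\otimes\mathbb{K}[X],\{\overline{\pi}_k\}_{k\geq 1})$ with $(\overline{\pi}_k)^{[r]}(a_1\otimes x_1,\ldots,a_k\otimes x_k)=(\pi_k)^{[r]}_{x_1,\ldots,x_k}(a_1,\ldots,a_k)\otimes x_r$ is a homotopy dendriform algebra and then invoking the standard fact that summing the components of a $\mathrm{Dend}_\infty$-structure yields an $A_\infty$-structure; it then proves (ii) by a separate case analysis (on the position of the $D$-entry relative to the block consumed by the inner operation) showing the bimodule identities are equivalent to (\ref{mda-h}); part (iii) is the same tautological computation in both texts. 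You instead package (i) and (ii) into a single statement: the map $\widehat{\Phi}:\mathcal{O}_D\to \mathrm{End}_{(D\otimes\mathbb{K}[X])\oplus D}$ — which is precisely the graded extension of the map $\mathcal{H}_n$ from Remark \ref{embedding}'s companion remark on matching dendriform cohomology — is a morphism of nonsymmetric operads, so that $\{\widehat{\Phi}(\pi_k)\}$ is an $A_\infty$-structure on the square-zero extension $(D\otimes\mathbb{K}[X])\oplus D$, from which the algebra structure on $D\otimes\mathbb{K}[X]$ and the bimodule structure on $D$ are read off simultaneously. The computational content (the three-branch case analysis against (\ref{circ-dend-match}), with well-definedness on $D$-entries resting on independence of $f^{[r]}$ from $x_r$) is essentially the same as the paper's two separate verifications, but your version does it once, explains why the chain-map property of $\mathcal{H}_\bullet$ asserted earlier in the paper holds at the operadic level, and makes the sign bookkeeping transparent since the signs in (\ref{mda-h}) and (\ref{a-inf-identities}) literally coincide under $|a\otimes x|=|a|$. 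The trade-off is that your key lemma (the full operad-morphism identity) is stated but, like the paper's corresponding steps, not written out in complete detail; a referee would want at least the one-$D$-entry branch spelled out.
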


\begin{proof}
(i) If $(D, \{ \pi_k \}_{k \geq 1})$ is a homotopy matching dendriform algebra, then it is not hard to see that $( D \otimes \mathbb{K}[X] , \{ \overline{\pi}_k  \}_{k \geq 1})$ is a homotopy dendriform algebra, where
\begin{align*}
( \overline{\pi}_k)^{[r]} (a_1 \otimes x_1, \ldots, a_k \otimes x_k) := (\pi_k)^{[r]}_{x_1, \ldots, x_k} (a_1, \ldots, a_k) \otimes x_r, \text{ for } [r] \in C_k.
\end{align*}
As a consequence,  $( D \otimes \mathbb{K}[X] , \{ \mu_k  \}_{k \geq 1})$ is an $A_\infty$-algebra, where $\mu_k = \sum_{r=1}^k ( \overline{\pi}_k)^{[r]}$. Hence the result follows.

\medskip

(ii) Let $a_i \otimes x_i \in D \otimes \mathbb{K}[X]$ (for $i=1, \ldots, r-1, r+1, \ldots, n$) and $a_r \in D$. Let $k, l \geq 1$ with $k+l-1=n$ and $1 \leq i \leq k$. Then for any $[r] \in C_n$, we have either
\begin{align*}
1 \leq r \leq i-1 \quad \text{ or } \quad i \leq r \leq i+l-1 \quad \text{ or } \quad i+l \leq r \leq k+l-1.
\end{align*}
In any case, it is easy to see that the corresponding bimodule condition is equivalent to the homotopy matching dendriform algebra condition (\ref{mda-h}). Hence the proof follows.

\medskip

(iii) For any $a_1, \ldots, a_k \in D$ and $x_1, \ldots, x_k \in X$, we have
\begin{align*}
    \mu_k \big(  P_{x_1} (a_1), \ldots, P_{x_k} (a_k) \big) =~& \mu_k \big(  a_1 \otimes x_1, \ldots, a_k \otimes x_k \big) \\
    =~& \sum_{r=1}^k P_{x_r} \big(  (x_k)^{[r]}_{x_1, \ldots, x_k} (a_1, \ldots, a_k)  \big)  \quad (\text{by } (\ref{define-a-inf-mul}))\\
    =~& \sum_{r=1}^k P_{x_r} \big( \eta_k (P_{x_1} (a_1), \ldots, a_r, \ldots, P_{x_k} (a_k)  \big) \quad (\text{by } (\ref{define-action-mul})).
\end{align*}
This shows that the triple $(D \otimes \mathbb{K}[X] , D, \{ P_x \}_{x \in X})$ is a homotopy matching relative Rota-Baxter algebra. If $(D, \{ \widetilde{\pi}_k \}_{k \geq 1})$ is the induced homotopy matching dendriform algebra structure on $D$, then we have
\begin{align*}
    (\widetilde{\pi}_k)^{[r]}_{x_1, \ldots, x_k } (a_1, \ldots, a_k) = \eta_k \big( P_{x_1}(a_1), \ldots, a_r, \ldots, P_{x_k} (a_k)  \big) = (\pi_k)^{[r]}_{x_1, \ldots, x_k} (a_1, \ldots, a_k) \quad (\text{by } (\ref{define-action-mul})).
\end{align*}
This completes the proof.
\end{proof}

\medskip

\medskip

\noindent {\bf Acknowledgements.} Ramkrishna Mandal would like to thank CSIR, Government of India for funding the PhD fellowship. Both the authors thank Department of Mathematics, IIT kharagpur for providing the beautiful academic atmosphere where the research has been carried out.
%The author would like to thank Indian Institute of Technology (IIT) Kharagpur for providing the beautiful academic environment where the research has been carried out.

\medskip

\noindent {\bf Data Availability Statement.} Data sharing is not applicable to this article as no new data were created or analyzed in this study.


\begin{thebibliography}{BFGM03}

\bibitem{aguiar} M. Aguiar, Pre-Poisson algebras, {\em Lett. Math. Phys.} 54 (2000), no. 4, 263-277.

\bibitem{aguiar4} M. Aguiar, Infinitesimal Hopf algebras, {\em Comtemporary Mathematics} 267 (2000) 1-29.

\bibitem{aguiar2} M. Aguiar, On the associative analog of Lie bialgebras, {\em J. Algebra} 244 (2001) 492-532.

\bibitem{aguiar3} M. Aguiar, Infinitesimal bialgebras, pre-Lie and dendriform algebras, in "Hopf Algebras", Lecture Notes in Pure and Applied Mathematics vol 237 (2004) 1-33.

%\bibitem{ammar-poncin} M. Ammar and N. Poncin, Coalgebraic approach to the Loday Infinity Category, Stem differential for $2n$-ary graded and homotopy algebras, {\em Ann. Inst. Fourier, Grenoble} 60 (1) (2010) 355-387.

\bibitem{atkinson} F. V. Atkinson, Some aspects of Baxter's functional equation, {\em J. Math. Anal. Appl.} 7 (1963) 1-30.

%\bibitem{ayupov} Sh. A. Ayupov and B. A. Omirov, On Leibniz algebras, In: Khakimdjanov, Y., Goze M., Ayupov Sh. (eds.)., {\em Algebra and Operator Theory Proceedings of the Colloquim in Tashkent.} Dordrecht: Springer. pp. 1-13 (1998).

%\bibitem{bai4} C. Bai, Left-symmetric algebras from linear functions, {\em J. Algebra} 281 (2004) 651-665.

%\bibitem{bai6} C. Bai, Left-symmetric bialgebras and an analogue of the classical Yang-Baxter equation, {\em Comm. Contemp. Math.} 10 (2008) 221-260.

\bibitem{bai-olivia} C. Bai, O. Bellier, L. Guo and X. Ni, Splitting of operations, Manin products, and Rota-Baxter operators, {\em Int. Math. Res. Not. IMRN} 2013 (2013), no. 3, 485-524.

%\bibitem{bala-leib} D. Balavoine,  D\'{e}formations des alg\'{e}bres de Leibniz, {\em C. R. Acad. Sci. Paris S\'{e}r. I Math.} 319 (1994), no. 8, 783-788.

\bibitem{bala-operad} D. Balavoine, Deformation of algebras over a quadratic operad, {\em Operads: Proceedings of Renaissance Conferences (Hartford, CT/ Luminy, 1995),} 207-234, Contemp. Math., 202, {\em Amer. Math. Soc., Providence, RI}, 1997.

%\bibitem{baez} J. C. Baez and A. S. Crans, Higher-dimensional algebra VI: Lie $2$-algebras, {\em Theor. Appl. Cat.} 12 (2004) 492-538.

\bibitem{baxter}
G. Baxter, An analytic problem whose solution follows from a simple algebraic identity, {\em Pacific J. Math.} 10 (1960), 731-742.

%\bibitem{bloh} A. Bloh, A generalization of the concept of a Lie algebra, {\em Dokl. Akad. Nauk SSSR} 165 (3) (1965) 471-473.

%\bibitem{bloh1} A. Bloh, Cartan-Eilenberg homology theory for a generalized class of Lie algebras,  {\em Dokl. Akad. Nauk SSSR} 175 (8) (1967) 824-826.

%\bibitem{branes} D. W. Branes, On Levi's Theorem for Leibniz algebras, {\em Bull. Aust. Math. Soc.} 86 (2012), no. 2, 184-185.

\bibitem{bruned} Y. Bruned, M. Hairer and L. Zambotti, Algebraic renormalisation of regularity structures, {\em Invent. math.} 215 (2019), 1039-1156.

%\bibitem{burde} D. Burde, Left-symmetric algebras, or pre-Lie algebras in geometry and physics, {\em Centr. Eur. J. Math.} 4 (2006) 323-357.

\bibitem{cartier} P. Cartier, On the structure of free Baxter algebras, {\em Advances in Mathematics.} 9 (2) (1972) 253-265.

%\bibitem{costa} R. Caseiro and J. M. Nunes da Costa, $\mathcal{O}$-Operators on Lie $\infty$-algebras with respect to Lie $\infty$-actions, {\em Comm. Algebra} 50 (2022), no. 7, 3079-3101.

%\bibitem{livernet} F. Chapoton and M. Livernet, Pre-Lie algebras and the rooted trees operad, {\em International Mathematics Research Notices}, Volume 2001, Issue 8 (2001) 395-408.

%\bibitem{chen-sti-xu} Z. Chen, M. Sti\'{e}non and P. Xu, From Atiyah Classes to Homotopy Leibniz algebras, {\em Commun. Math. Phys.} 341 (2016) 309-349.

\bibitem{connes} A. Connes and D. Kreimer, Renormalization in quantum field theory and the Riemann-Hilbert problem I: the Hopf algebra structure of graphs and the main theorem, {\em Comm. Math. Phys.} 210 (2000) 249-273.

%\bibitem{das-leib} A. Das, Leibniz algebras with derivations, {\em J. Homotopy Relat. Struct.} 16 (2021) 245-274.

\bibitem{das-rota} A. Das, Deformations of associative Rota-Baxter operators, {\em J. Algebra} 560 (2020) 144-180.

\bibitem{das-dend} A. Das, Cohomology and deformations of dendriform algebras, and $\mathrm{Dend}_\infty$-algebras, {\em Comm. Algebra} Vol. 50, Issue 4 (2022) 1544-1567.

%\bibitem{das-jpaa} A. Das, Deformations of Loday-type algebras and their morphisms, {\em J. Pure Appl. Algebra} Vol. 225, Issue 6 (2021) 106599.

\bibitem{das-mishra} A. Das and S. K. Mishra, The $L_\infty$-deformations of associative Rota-Baxter algebras and homotopy Rota-Baxter operators, {\em J. Math. Phys.} Vol. 63, Issue 5 (2022) 051703.

%\bibitem{das-jmp} A. Das, Cohomology and deformations of weighted Rota-Baxter operators, {\em J. Math. Phys.} Vol. 63, Issue 9 (2022) 091703.

%\bibitem{diatta} A. Diatta and A. Medina, Classical Yang-Baxter equation and left-symmetric affine geometry on Lie groups, {\em Manuscripta Math.} 114 (2004) 477-486.

%\bibitem{dot} V. Dotsenko and A. Khoroshkin, Quillen homology for operads via Gr\"{o}bner bases, {\em Doc. Math.} 18 (2013), 707-747.

%\bibitem{dotsenko} V. Dotsenko, S. Shadrin and B. Vallette, Pre-Lie deformation theory, {\em Mosc. Math. J.} 16 (2016) no. 3, 505-543.

%\bibitem{dz} A. Dzhumadil'daev, Cohomologies and deformations of right-symmetric algebras, {\em J. Math. Sci.} vol. 93, no. 6 (1999) 836-876.

\bibitem{ebrahimi} K. Ebrahimi-Fard, Loday-type algebras and the Rota-Baxter relation, {\em Lett. Math. Phys.} 61 (2002), no. 2, 139-147.

\bibitem{foissy} L. Foissy, Algebraic structures on typed decorated rooted trees, arXiv preprint, arXiv:1811.07572.

%\bibitem{diagram} Y. Fr\'{e}gier, M. Markl and D. Yau, The $L_\infty$-deformation complex of diagrams of algebras, {\em New York J. Math.} 15 (2009) 353-392.

%\bibitem{fre-zam2} Y. Fr\'{e}gier and M. Zambon, $L_\infty$-algebras governing simultaneous deformations via derived brackets, arXiv preprint, \url{https://arxiv.org/abs/1202.2896v1}.

\bibitem{fre-zam} Y. Fr\'{e}gier and M. Zambon, Simultaneous deformations of algebras and morphisms via derived brackets,
{\em J. Pure Appl. Algebra} 219 (2015), no. 12, 5344-5362.

\bibitem{gao-guo-zhang} X. Guo, L. Guo and Y. Zhang, Matching Rota--Baxter algebras, matching dendriform algebras and matching pre-Lie algebras, {\em J. Algebra} 552 (2020), 134-170.

\bibitem{gao-guo-zhang2} X. Guo, L. Guo and Y. Zhang, Commutative matching Rota-Baxter operators, shuffle products with decorations and matching Zinbiel algebras, arXiv preprint, arXiv:2007.12095.

\bibitem{gers2} M. Gerstenhaber, The cohomology structure of an associative ring, {\em Ann. of Math. (2)} 78 (1963) 267-288.

\bibitem{gers}  M. Gerstenhaber, On the deformation of rings and algebras, {\em Ann. of Math. (2)} 79 (1964) 59-103.

\bibitem{gers-voro} M. Gerstenhaber and A. A. Voronov, Homotopy $G$-algebras and moduli space operad, {\em Internat. Math. Research Notices} 1995 (1995) 141-153.

%\bibitem{getzler} E. Getzler, Lie theory for nilpotent $L_\infty$-algebras, {\em Ann. of Math. (2)} 170 (2009) 271-301.

\bibitem{gubarev} V. Gubarev and P. Kolesnikov, Embedding of dendriform algebras into Rota-Baxter algebras, {\em Open Mathematics} 11 (2) (2013) 226-245.

\bibitem{guo-book} L. Guo, Introduction to Rota-Baxter algebras, International Press, Somerville, Massachusetts, U.S.A., and by Higher Education Press, Beijing, China (2012).

\bibitem{guo-keigher} L. Guo and W. Keigher, Baxter algebras and shuffle products, {\em Adv. Math.} 150 (2000), no. 1, 117-149.

%\bibitem{hoch} G. Hochschild, On the cohomology of associative algebras, {\em Ann. of Math.} (2) 46 (1945) 58-67.

\bibitem{keller} B. Keller, Introduction of $A_\infty$-algebras and modules, {\em Homology Homotopy Appl.} 3(1) (2001) 1-35.

%\bibitem{khuda-poncin-qiu} D. Khudaverdyan, N. Poncin and J. Qiu, On the infinity category of homotopy Leibniz algebras, {\em Theory Appl. Categ.} 29 (2014) 332-370.

\bibitem{sch} Y. Kosmann-Schwarzbach, From Poisson algebras to Gerstenhaber algebras, {\em Annales de l'Institut Fourier}, Volume 46 (1996) no. 5, pp. 1243-1274.


%\bibitem{kuper} B. A. Kupershmidt, What a classical r-matrix really is, {\em J. Nonlinear Math. Phys.} 6 (1999) no. 4, 448-488.

%\bibitem{lada-markl} T. Lada and M. Markl, Strongly homotopy Lie algebras, {\it Comm. Algebra} 23 (1995), no. 6, 2147-2161.

%\bibitem{lada-stasheff} T. Lada and J. Stasheff, Introduction to SH Lie algebras for Physicists, {\it Internat. J. Theoret. Phys.} 32 (1993), no. 7, 1087-1103.

\bibitem{laza} A, Lazarev, Y. Sheng and R. Tang, Deformations and homotopy theory of relative Rota-Baxter Lie algebras, {\em Comm. Math. Phys.} 383 (2021), no. 1, 595-631.

%\bibitem{loday-une} J.-L. Loday, Une version non commutative des alg\`{e}bres de Lie: les alg\`{e}bres de Leibniz, {\em Enseign. Math. (2)} 39 (1993) no. 3-4, 269-293.

%\bibitem{loday-pira} J.-L. Loday and T. Pirashvili, Universal enveloping algebras of Leibniz algebras and (co)homology, {\em Math. Ann.} 296 (1993) no. 1, 139-158.

\bibitem{loday} J.-L. Loday, Dialgebras, {\em Dialgebras and related operads,} 7–66, Lecture Notes in Math., 1763, {\em Springer, Berlin,} 2001.

%\bibitem{loday-ronco} J.-L. Loday and M. Ronco, Trialgebras and families of polytopes, In: {\em Homotopy theory: relations with algebraic geometry, group cohomology, and algebraic $K$-theory,} 369-398, Contemp. Math., 346, {\em Amer. Math. Soc., Providence, RI,} 2004.

\bibitem{lod-val-book} J.-L. Loday and B. Vallette, Algebraic operads, Grundlehren der mathematischen Wissenschaften, 346. {\em Springer, Heidelberg,} 2012. xxiv+634 pp.

%\bibitem{lurie} J. Lurie, \textcolor{red}{COMPLETE}

\bibitem{magri}  F. Magri, A simple model of the integrable Hamiltonian equation, {\em J. Math. Phys.} 19 (1978), 1156-1162.

%\bibitem{manchon} D. Manchon, A short survey on pre-Lie algebras, {\em Noncommutative geometry and Physics: renormalization, motives, index theory}, ESI Lect. Math. Phys., Eur. Math. Soc., Z\"{u}rich, 2011, pp. 89-102.

%\bibitem{mandal} A. Mandal, Deformations of Leibniz algebra morphisms, {\em Homology Homotopy Appl.} 9 (2007) 439-450.

%\bibitem{markl} M. Markl,  Intrinsic brackets and the $L_\infty$-deformation theory of bialgebras, {\em J. Homotopy Relat. Struct.} 5(1) (2010) 177-212.

\bibitem{nij-ric} A. Nijenhuis and R. Richardson, Cohomology and deformations in graded Lie algebras, {\em Bull. Amer. Math. Soc.} 72 (1966), 1-29.

%\bibitem{nij-ric2} A. Nijenhuis and R. Richardson, Deformation of Lie algebra structures, {\em J. Math. Mech.} 17 (1967) 89-105.

%\bibitem{pridham} J. P. Pridham, Unifying derived deformation theories, {\em Adv. Math.} 224 (2010), 772-826.

%\bibitem{ra} A. Rezaei-Aghdam, L. Sedghi-Ghadim and G. Haghighatdoost, Leibniz bialgebras, classical Yang-Baxter equations and dynamical systems, {\em Adv. Appl. Clifford Algebras} (2021) 31:77.

\bibitem{rota} G.-C. Rota, Baxter algebras and combinatorial identities. I, II, {\em Bull. Amer. Math. Soc. 75 (1969), 325-329; ibid.} 75 1969 330-334.

%\bibitem{segal50} D. Segal, Free left-symmetrical algebras and an analogue of the Poincare-Birkhoff-Witt theorem, {\em J. Algebra} 164 (1994) 750-772.

%\bibitem{sheng-pl} Y. Sheng and R. Tang, Leibniz bialgebras, relative Rota-Baxter operators and the classical Leibniz Yang-Baxter equation, {\em J. Noncommutative Geom.} to appear.

\bibitem{stasheff} J. D. Stasheff, Homotopy associativity of $H$-spaces. I, II,
{\em Trans. Amer. Math. Soc.} 108 (1963), 275-312.

\bibitem{tang} R. Tang, C. Bai, L. Guo and Y. Sheng, Deformations and their controlling cohomologies of $\mathcal{O}$-operators, {\em Comm. Math. Phys.} 368 (2019), no. 2, 665-700.

%\bibitem{tang2} R. Tang, C. Bai, L. Guo and Y. Sheng, Homotopy Rota-Baxter operators, homotopy $\mathcal{O}$-operators and homotopy post-Lie algebras, {\em J. Noncommut. Geom.} to appear (2022).

%\bibitem{sheng-defr} R. Tang, Y. Sheng and Y. Zhou, Deformations of relative Rota-Baxter operators on Leibniz algebras, {\em Int. J. Geom. Methods Mod. Phys.} Vol. 17, No. 12, 2050174 (2020).

\bibitem{uchino}  K. Uchino, Quantum analogy of Poisson geometry, related dendriform algebras and Rota-Baxter operators, {\em Lett. Math. Phys.} 85 (2008), no. 2-3, 91-109.

%\bibitem{uchi} K. Uchino, Derived bracket and sh Leibniz algebras, {\em J. Pure Appl. Algebra} 215 (2011) 1102-1111.

\bibitem{voro} Th. Voronov, Higher derived brackets and homotopy algebras, {\em J. Pure Appl. Algebra} 202 (2005), 133-153.

%\bibitem{wang} K. Wang and G. Zhou, Deformations and homotopy theory of Rota-Baxter algebras of any weight, arXiv preprint, \url{https://arxiv.org/abs/2108.06744}.

%\bibitem{yau} D. Yau, Gerstenhaber structure and Deligne's conjecture for Loday algebras, {\em J. Pure Appl. Algebra}  209 (2007), 739-752.

%\bibitem{zhang-gao-guo} T. Zhang, X. Gao and L. Guo, Reynolds operators and their free objects from bracketed words and rooted trees, {\em J. Pure Appl. Algebra} 225 (2021), no. 12, 106766.
\end{thebibliography}
\end{document}